\title{$\textit{L}^{\text{2}}$-invariants of nonuniform lattices\\ in semisimple Lie groups}
\author{Holger Kammeyer}
\address{Mathematisches Institut\\ Universit\"at Bonn\\ Endenicher Allee 60\\ 53115 Bonn\\ Germany}
\email{kammeyer@math.uni-bonn.de}
\urladdr{http://www.math.uni-bonn.de/people/kammeyer/}
\newtheorem{theorem}{Theorem}[section]
\newtheorem{corollary}{Corollary}[section]
\newtheorem{lemma}{Lemma}[section]
\newtheorem{proposition}{Proposition}[section]
\newtheorem{conjecture}{Conjecture}[section]
\newtheorem{question}{Question}[section]
\theoremstyle{remark}
\newtheorem{example}{Example}[section]
\theoremstyle{definition}
\newtheorem{definition}{Definition}[section]
   \let\c@corollary=\c@theorem
   \let\c@lemma=\c@theorem
   \let\c@proposition=\c@theorem
   \let\c@condition=\c@theorem
   \let\c@conjecture=\c@theorem
   \let\c@question=\c@theorem
   \let\c@problem=\c@theorem
   \let\c@example=\c@theorem
   \let\c@remark=\c@theorem
   \let\c@definition=\c@theorem
\newcommand*{\MRref}[2]{ \href{http://www.ams.org/mathscinet-getitem?mr=#1}{MR \textbf{#1}}}
\newcommand*{\arXivref}[1]{ \href{http://www.arxiv.org/abs/#1}{arXiv:\textbf{#1}}}
\newcommand*{\fr}[1]{\mathfrak{#1}}
\newcommand*{\ad}{\textup{ad}}
\newcommand*{\rank}{\textup{rank}}
\newcommand*{\Gg}{\mathbf G}
\newcommand*{\Hh}{\mathbf H}
\newcommand*{\Pp}{\mathbf P}
\newcommand*{\Qq}{\mathbf Q}
\newcommand*{\Ll}{\mathbf L}
\newcommand*{\Ss}{\mathbf S}
\newcommand*{\Nn}{\mathbf N}
\newcommand*{\Mm}{\mathbf M}
\def\moverlay{\mathpalette\mov@rlay}
\def\mov@rlay#1#2{\leavevmode\vtop{%
   \baselineskip\z@skip \lineskiplimit-\maxdimen
   \ialign{\hfil$\m@th#1##$\hfil\cr#2\crcr}}}
\newcommand{\charfusion}[3][\mathord]{
    #1{\ifx#1\mathop\vphantom{#2}\fi
        \mathpalette\mov@rlay{#2\cr#3}
      }
    \ifx#1\mathop\expandafter\displaylimits\fi}
\newcommand{\bigcupdot}{\charfusion[\mathop]{\bigcup}{\cdot}}
\numberwithin{equation}{section}
\numberwithin{figure}{section}
\begin{document}

\begin{abstract}    
We compute \(L^2\)-invariants of certain nonuniform lattices in semisimple Lie groups by means of the Borel-Serre compactification of arithmetically defined locally symmetric spaces. The main results give new estimates for Novikov--Shubin numbers and vanishing \(L^2\)-torsion for lattices in groups with even deficiency. We discuss applications to Gromov's Zero-in-the-Spectrum Conjecture as well as to a proportionality conjecture for the \(L^2\)-torsion of measure equivalent groups.
\end{abstract}

\maketitle


\section{Introduction}

Let \(\Gamma\) be a discrete countable group and consider a finite free  \(\Gamma\)-CW complex \(X\) with cellular chain complex \(C_p(X)\).  The group \(\Gamma\) acts isometrically on the \(L^2\)-completion \(C^{(2)}_p(X) = \ell^2 (\Gamma) \otimes_{\Z \Gamma} C_p(X)\) and the differentials of \(C_p(X)\) induce the \(L^2\)-Laplacian \(\Delta_p = d_p^* d_p + d_{p+1} d_{p+1}^*\) acting on \(C^{(2)}_p(X)\).  \(L^2\)-invariants of \(X\) capture spectral properties of the bounded \(\Gamma\)-equivariant operators \(\Delta_p\).

The \emph{\(L^2\)-Betti numbers} \(b^{(2)}_p(X) = \dim_{\mathcal{N}(\Gamma)} \ker \Delta_p\) for \(p \geq 0\) form the simplest example.  Their definition involves the real valued \emph{von Neumann dimension} induced by the trace of the group von Neumann algebra \(\mathcal{N}(\Gamma)\).  It turns out that \(L^2\)-Betti numbers provide powerful invariants.  As an example, a positive \(L^2\)-Betti number obstructs nontrivial circle actions and nontrivial self-coverings of \(\Gamma \backslash X\).  We will be concerned with two more sophisticated types of \(L^2\)-invariants.  The \(p\)-th \emph{Novikov--Shubin invariant} \(\widetilde{\alpha}_p(X) \in [0, \infty] \cup \infty^+\) measures by von Neumann dimension how slowly aggregated eigenspaces of \(\Delta_p\) grow for small eigenvalues.  The \emph{\(L^2\)-torsion} \(\rho^{(2)}(X) \in \R\) is the \(L^2\)-counterpart of classical Reidemeister torsion.  It is only defined if \(X\) is \emph{\(\det\)-\(L^2\)-acyclic} which essentially means that \(b^{(2)}_p(X) = 0\) for \(p \geq 0\).

\(L^2\)-invariants are homotopy invariants so that we immediately obtain invariants of groups with finite \(E\Gamma\).  An important class of those groups is given by torsion-free lattices \(\Gamma \subset G\) in semisimple Lie groups.  If such a lattice is uniform (has compact quotient), a finite \(E\Gamma\) is given by the \emph{symmetric space} \(X = G/K\) where \(K \subset G\) is a maximal compact subgroup.  The \emph{locally symmetric space} \(\Gamma \backslash X\) is then a closed manifold model for \(B\Gamma\).  This opens up the analytic approach to \(L^2\)-invariants where the cellular \(L^2\)-Laplacian is replaced by the de Rham Laplacian acting on differential \(p\)-forms of X.  With this method the \(L^2\)-invariants of all uniform lattices have been computed by M.\,Olbrich \cite{Olbrich:L2-InvariantsLocSym} building on previous work by E.\,Hess--T.\,Schick, J.\,Lott and A.\,Borel, see \fullref{thm:l2symmetricspaces}.

It is however fairly restrictive to require that lattices be uniform as this already rules out the most natural example, \(\textup{SL}(n,\Z)\), which is central to number theory and geometry.  Therefore the purpose of this paper is to calculate \(L^2\)-invariants of nonuniform lattices by using a compactification of the locally symmetric space \(\Gamma \backslash X\).  Of course the compactification has to be homotopy equivalent to the original \(\Gamma \backslash X\) to make sure it is a \(B\Gamma\).  A construction due to A.\,Borel and J.-P.\,Serre suggests to add boundary components at infinity so that \(\Gamma \backslash X\) forms the interior of a compact manifold with \emph{corners}.  To expand on this, let us first suppose that \(\Gamma\) is \emph{irreducible} and that \(G\) is connected linear with \(\rank_\R G > 1\).  Then G.\,Margulis' celebrated arithmeticity theorem says we may assume there exists a semisimple linear algebraic \(\Q\)-group \(\Gg\) such that \(G = \Gg^0(\R)\) and such that \(\Gamma\) is \emph{commensurable} with \(\Gg(\Z)\).  We assemble certain nilmanifolds \(N_P\) and so-called \emph{boundary symmetric spaces} \(X_\Pp = M_\Pp / K_P\) to \emph{boundary components} \(e(\Pp) = N_P \times X_\Pp\) associated with the rational parabolic subgroups \(\Pp \subset \Gg\).  We define a topology on the \emph{bordification} \(\overline{X} = \bigcup_\Pp e(\Pp)\) such that \(e(\Qq)\) is contained in the closure of \(e(\Pp)\) if and only if \(\Qq \subset \Pp\).  The \(\Gamma\)-action on \(X = e(\Gg)\) extends freely to \(\overline{X}\).  The bordification \(\overline{X}\) is still contractible but now has a compact quotient \(\Gamma \backslash \overline{X}\) called the \emph{Borel--Serre compactification} of the locally symmetric space \(\Gamma \backslash X\).

We note that \(L^2\)-Betti numbers and Novikov-Shubin invariants have been defined for groups with not necessarily finite \(E\Gamma\) in \citelist{\cite{Cheeger-Gromov:L2-cohomology} \cite{Lueck:ArbitraryModules} \cite{Lueck-Reich-Schick:ArbitraryNovikovShubin}}.  In the case of \(L^2\)-Betti numbers it follows already from the work of J.\,Cheeger--M.\,Gromov \cite{Cheeger-Gromov:BoundsVonNeumann} that for a lattice \(\Gamma \subset G\), uniform or not, \(b^{(2)}_p(\Gamma) \neq 0\) if and only if \(\dim X = 2p\) and \(\delta(G) = 0\) where \(\delta(G) = \rank_\C (G) - \rank_\C(K)\) is the \emph{deficiency} of \(G\).  To the author's knowledge, the only results for Novikov--Shubin invariants and \(L^2\)-torsion of nonuniform lattices have been obtained in the hyperbolic case.  An upper bound for the first Novikov-Shubin invariant of compact hyperbolic 3-manifolds was given by J.\,Lott and W.\,L\"uck  \cite{Lott-Lueck:L2ThreeManifolds}.  This can be seen as the case \(\Gg = \textup{SO}(3,1; \C)\) of our first result.

\begin{theorem} \label{thm:novikovshubinqrankone}
Let \(\Gg\) be a connected semisimple linear algebraic \(\Q\)-group.  Suppose that \(\rank_\Q(\Gg) = 1\) and \(\delta(\Gg(\R)) > 0\).  Let \(\Pp \subset \Gg\) be a proper rational parabolic subgroup.  Then for every arithmetic lattice \(\Gamma \subset \Gg(\Q)\)
\[ \widetilde{\alpha}_q(\Gamma) \leq \delta(M_\Pp) + d(N_P). \]
\end{theorem}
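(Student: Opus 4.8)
The plan is to model $B\Gamma$ by the Borel--Serre compactification and to read the estimate off its boundary. Since Novikov--Shubin invariants do not change under passage to a finite-index subgroup, Selberg's lemma lets us assume $\Gamma$ is torsion-free; then $\Gamma$ acts freely on the contractible bordification $\overline{X}$, so $M := \Gamma\backslash\overline{X}$ is a compact aspherical manifold with boundary and a model for $B\Gamma$, whence $\widetilde{\alpha}_q(\Gamma) = \widetilde{\alpha}_q(M)$. Because $\delta(\Gg(\R)) > 0$, the result of Cheeger--Gromov recalled above gives $b^{(2)}_p(\Gamma) = 0$ for all $p$, so $M$ and $\partial M$ are $L^2$-acyclic. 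As $\rank_\Q(\Gg) = 1$, every proper rational parabolic subgroup is maximal and $\partial M = \bigsqcup_{[\Pp]} Y_\Pp$ is a disjoint union over the finitely many $\Gamma$-conjugacy classes, with $Y_\Pp = \Gamma_\Pp\backslash e(\Pp)$ and $\Gamma_\Pp = \Gamma\cap\Pp$; each $Y_\Pp$ is a closed aspherical manifold, fibred over a locally symmetric quotient of $X_\Pp$ with infranilmanifold fibres a quotient of $N_P$, whose universal cover is the Riemannian product $e(\Pp) = N_P\times X_\Pp$. Rational rank one forces all faces to contribute the same number $\delta(M_\Pp) + d(N_P)$, so the choice of $\Pp$ is immaterial.

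First I would show $\widetilde{\alpha}_q(Y_\Pp) = \widetilde{\alpha}_q(e(\Pp)) \leq \delta(M_\Pp) + d(N_P)$. For a product cell structure and product inner product on $N_P\times X_\Pp$ the combinatorial $q$-Laplacian is not block-diagonal in bidegree, but its quadratic form on the bidegree-$(i,j)$ summand with $i+j=q$ coincides with that of $\Delta^{N_P}_i\otimes 1 + 1\otimes\Delta^{X_\Pp}_j$, which is all that is needed to bound spectral density functions from below via the von Neumann-dimension min-max principle. The nilmanifold factor is $\det$-$L^2$-acyclic and all of its Novikov--Shubin invariants equal $d(N_P)$, because $\Gamma_{N_P}$ has polynomial growth of degree the homogeneous dimension of $N_P$; the factor $X_\Pp$ is handled by \fullref{thm:l2symmetricspaces}, which gives $\Delta^{X_\Pp}_j$ a spectral gap except in a critical degree $j_0$ and its Poincaré dual, where $b^{(2)}_{j_0}(X_\Pp) > 0$ if $\delta(M_\Pp) = 0$ and $\widetilde{\alpha}_{j_0}(X_\Pp) = \delta(M_\Pp)$ otherwise. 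Taking $q = i_0 + j_0$ and tensoring the spectral subspace of $\Delta^{N_P}_{i_0}$ on $[0,\lambda]$ with the $L^2$-harmonic $X_\Pp$-forms in degree $j_0$ (respectively with the spectral subspace of $\Delta^{X_\Pp}_{j_0}$ on $[0,\lambda]$) exhibits, for all small $\lambda > 0$, a subspace of the $[0,2\lambda]$-spectral subspace of $\Delta^{e(\Pp)}_q$ whose von Neumann dimension is at least a fixed positive multiple of $\lambda^{(\delta(M_\Pp) + d(N_P))/2}$; hence $\widetilde{\alpha}_q(e(\Pp)) \leq \delta(M_\Pp) + d(N_P)$, in fact with equality, though only the inequality is needed.

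The remaining step, transferring this from a boundary face to $\Gamma = \pi_1(M)$, is the main obstacle. The cellular $\ell^2$-chains of $\overline{X}$ supported over $Y_\Pp$ form the induced complex $\mathrm{ind}_{\Gamma_\Pp}^\Gamma C_*^{(2)}(e(\Pp);\mathcal N(\Gamma_\Pp))$ inside $C_*^{(2)}(\overline{X};\mathcal N(\Gamma))$, and induction preserves Novikov--Shubin invariants; but $\partial M$ has codimension one, so the codifferential of $M$ does not restrict to that of $e(\Pp)$ and the low-energy subspaces above need not be low-energy for $\Delta^{\overline{X}}_q$. I would combine Poincaré--Lefschetz duality, which identifies $C_*^{(2)}(M,\partial M;\mathcal N(\Gamma))$ up to chain homotopy equivalence with the $\mathcal N(\Gamma)$-cochain complex of $M$ and hence yields $\widetilde{\alpha}_k\bigl(C_*^{(2)}(M,\partial M)\bigr) = \widetilde{\alpha}_{n-k}(\Gamma)$ for $n = \dim X$, with the standard inequalities among the Novikov--Shubin invariants of the three terms of the short exact sequence $0 \to C_*^{(2)}(\partial M) \to C_*^{(2)}(M) \to C_*^{(2)}(M,\partial M) \to 0$ of Hilbert $\mathcal N(\Gamma)$-chain complexes; since $\widetilde{\alpha}_q\bigl(C_*^{(2)}(\partial M)\bigr) = \delta(M_\Pp) + d(N_P)$, this forces $\min\{\widetilde{\alpha}_q(\Gamma),\widetilde{\alpha}_{n-1-q}(\Gamma)\} \leq \delta(M_\Pp) + d(N_P)$. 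For a self-dual degree, $2q = n-1$, this is the claim; in general one must still place the small spectrum in degree $q$ itself, which I would do directly by pushing the harmonic-times-low-frequency chains of $Y_\Pp$ into a collar of $\partial M$ and correcting them to be approximately closed and coclosed in $M$. Quantifying this correction — controlling the error introduced by crossing the codimension-one boundary — is the technical heart of the proof.
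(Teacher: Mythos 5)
Your overall route is the paper's: model $B\Gamma$ by the Borel--Serre compactification, use $\rank_\Q(\Gg)=1$ to see that the boundary is a disjoint union of closed pieces covered by $e(\Pp)=N_P\times X_\Pp$ for minimal $\Pp$, estimate the middle Novikov--Shubin invariant of $e(\Pp)$ by combining Rumin's bound for the nilmanifold factor with Olbrich's computation for $X_\Pp$, and then transfer the estimate from $\partial\overline{X}$ to $\Gamma$ by duality and the exact sequence of the pair. The genuine gap is in the transfer step. Your bookkeeping with the invariants $\widetilde{\alpha}_p$ only yields $\min\{\widetilde{\alpha}_q(\Gamma),\widetilde{\alpha}_{n-1-q}(\Gamma)\}\le\delta(M_\Pp)+d(N_P)$, and for $\dim X=2q$ you then invoke an unproven construction (pushing harmonic-times-low-frequency chains of $Y_\Pp$ into a collar and ``correcting'' them to be approximately closed and coclosed), explicitly leaving its quantification open; as written the argument is incomplete exactly at what you call its technical heart. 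Moreover, that extra analytic step is unnecessary. Working with the refined invariants $\alpha_p$ instead of $\widetilde{\alpha}_p$, L\"uck's inequality for the short exact sequence $0\to C^{(2)}_*(\partial\overline{X})\to C^{(2)}_*(\overline{X})\to C^{(2)}_*(\overline{X},\partial\overline{X})\to 0$, combined with $L^2$-acyclicity of $\overline{X}$ and Poincar\'e duality $\alpha_{p+1}(\overline{X},\partial\overline{X})=\alpha_{n-p}(\overline{X})$, gives $\tfrac12\min\{\alpha_p(\overline{X}),\alpha_{n-p}(\overline{X})\}\le\alpha_p(\partial\overline{X})$; this is \fullref{lemma:novikovshubinmanifoldwithboundary}. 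At the middle degree $p=q$ the pair $\{q,n-q\}$ is either $\{q,q\}$ or $\{q,q+1\}$, so the left-hand side already dominates $\widetilde{\alpha}_q(\overline{X})=\widetilde{\alpha}_q(\Gamma)$, and no localization of the small spectrum in degree $q$ is needed. This middle-dimension collapse is precisely why the theorem is stated for $q$ only.

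Two secondary points in your boundary estimate also need repair. First, the claim that all Novikov--Shubin invariants of the nilmanifold equal $d(N_P)$ overstates what is known: Rumin gives $0<\alpha_p(N_P)\le d(N_P)$, with equality only in special cases, and your min--max construction as stated needs a spectral density lower bound of the form $F(\lambda)-F(0)\ge c\,\lambda^{d(N_P)/2}$ for all small $\lambda$, which is stronger than the liminf statement $\alpha_p(N_P)\le d(N_P)$; you would either need Rumin's two-sided pinching in the relevant degree, a sequence version of your estimate, or simply L\"uck's product formula as in the paper (which in turn requires checking the limit property for both factors, available from the explicit computations of Rumin and Olbrich). Second, you assert without proof that a bidegree $(i_0,j_0)$ with $i_0+j_0=q$ exists with $j_0$ in Olbrich's window (or equal to the harmonic degree when $\delta(M_\Pp)=0$, taking into account the Euclidean factor of $X_\Pp$ coming from the center of $\Mm_\Pp$) and $i_0\in\{0,\ldots,\dim N_P\}$; this dimension count, showing for instance that $q-\lceil\dim N_P/2\rceil$ lies in the required interval, is short but necessary and is carried out in the paper's case analysis.
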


Here \(q\) is the \emph{middle dimension} of \(X\), so either \(\dim X = 2q\) or \(\dim X = 2q + 1\), and \(d(N_P)\) denotes the degree of polynomial growth of the unipotent radical \(N_P\) of \(P = \Pp(\R)\).  An important feature of the theorem is that no restriction is imposed on the real rank of \(\Gg\).  We will for example construct lattices \(\Gamma \subset \textup{SL}(4,\R)\) that fall under the assumptions of the theorem so that \(\widetilde{\alpha}_4(\Gamma) \leq 4\).

The \(L^2\)-torsion of a torsion-free lattice \(\Gamma \subset G\) is only defined if \(\overline{X}\) is \(\det\)-\(L^2\)-acyclic which is equivalent to \(\delta(G) > 0\).  The only such rank one Lie groups without compact factors are the groups \(G = \textup{SO}^0(2n+1,1)\).  It is a deep result of W.\,L\"uck--T.\,Schick \cite{Lueck-Schick:Hyperbolic} that the \(L^2\)-torsion of a torsion-free lattice \(\Gamma \subset \textup{SO}^0(2n+1,1)\) is proportional to the hyperbolic covolume, the first few proportionality constants being \(-\frac{1}{6\pi}\), \(\frac{31}{45\pi^2}\) and \(-\frac{221}{70\pi^3}\) for \(n = 1, 2, 3\).  One can get rid of the torsion-freeness assumption by defining the \emph{virtual \(L^2\)-torsion} \(\rho^{(2)}_{\textup{virt}}(\Gamma) = \frac{\rho^{(2)}(\Gamma')}{[\Gamma : \Gamma']}\) for a torsion free subgroup \(\Gamma' \subset \Gamma\) of finite index which always exists by Selberg's Lemma \cite{Alperin:SelbergsLemma}.  This is well-defined because \(L^2\)-torsion is multiplicative under finite coverings.  In contrast to the result of L\"uck--Schick, we prove that for higher rank Lie groups the virtual \(L^2\)-torsion vanishes in (at least) half of all cases.

\begin{theorem} \label{thm:virtuall2torsion}
Let \(G\) be a connected semisimple linear Lie group with positive, even deficiency.  Then every lattice \(\Gamma \subset G\) is virtually \(\det\)-\(L^2\)-acyclic and
\[ \rho^{(2)}_{\textup{virt}}(\Gamma) = 0. \]
\end{theorem}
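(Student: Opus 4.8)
\emph{Overview of the plan.} I would model \(B\Gamma\) by the Borel--Serre compactification and then play Poincar\'e--Lefschetz duality off against the additivity of \(L^2\)-torsion, the contribution of the boundary vanishing because it is assembled from nilmanifold bundles. By Selberg's Lemma \cite{Alperin:SelbergsLemma} and multiplicativity of \(L^2\)-torsion under finite coverings it is enough to treat torsion-free lattices, and I may assume \(G\) has no compact factors. Since every lattice in \(G\) is commensurable to a direct product of irreducible lattices in partial products of the simple factors, and since the vanishing of \(\rho^{(2)}_{\textup{virt}}\) depends only on the commensurability class, the \(L^2\)-torsion product formula \(\rho^{(2)}(Y \times Z) = \chi(Z)\,\rho^{(2)}(Y)\) for a \(\det\)-\(L^2\)-acyclic factor \(Y\) --- together with the fact that the Euler characteristic of a lattice vanishes precisely when its ambient group has positive deficiency --- lets an induction on \(\dim X\) reduce the assertion to the case of an irreducible \(\Gamma\). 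As every rank-one simple Lie group has deficiency \(0\) or \(1\), a positive even \(\delta(G)\) forces \(\rank_\R G \geq 2\); hence if \(\Gamma\) is moreover nonuniform, Margulis arithmeticity realizes it inside \(\Gg(\Q)\) for a semisimple linear algebraic \(\Q\)-group \(\Gg\). (For uniform \(\Gamma\) the boundary below is empty and the argument specializes to the closed case, which is also contained in \fullref{thm:l2symmetricspaces}.)

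\emph{Acyclicity.} Write \(W = \Gamma\backslash\overline{X}\) for the Borel--Serre compactification, a compact manifold with corners and a finite model for \(B\Gamma\); recall that \(\overline{X}\) is a contractible free \(\Gamma\)-CW complex with boundary \(\partial\overline{X} = \bigcup_{\Pp}\overline{e(\Pp)}\) over the proper rational parabolic subgroups. The hypothesis \(\delta(G) > 0\) gives \(b^{(2)}_p(\Gamma) = 0\) for all \(p\) by the Cheeger--Gromov vanishing recalled in the introduction \cite{Cheeger-Gromov:BoundsVonNeumann}, and since arithmetic groups are residually finite the Determinant Conjecture holds for \(\Gamma\) and all of its subgroups, so the cellular chain complex of \(\overline{X}\) is of determinant class. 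Therefore \(\overline{X}\) is \(\det\)-\(L^2\)-acyclic over \(\mathcal{N}(\Gamma)\), which already yields virtual \(\det\)-\(L^2\)-acyclicity, and \(\rho^{(2)}(\Gamma) = \rho^{(2)}(\overline{X})\) is a well-defined real number.

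\emph{Vanishing of the boundary torsion.} The core of the proof is that \(\rho^{(2)}(\partial\overline{X};\mathcal{N}(\Gamma)) = 0\). For a proper rational parabolic \(\Qq\) let \(\Gamma_\Qq\) be the stabilizer of \(e(\Qq)\); it contains a lattice of the unipotent radical of \(\Qq\), an infinite finitely generated nilpotent normal subgroup, so \(b^{(2)}_p(\Gamma_\Qq) = 0\) for all \(p\), and the closed face \(\Gamma_\Qq\backslash\overline{e(\Qq)}\) is a finite \(B\Gamma_\Qq\) that fibers over the Borel--Serre compactification of a lower-dimensional locally symmetric space with fiber a positive-dimensional nilmanifold. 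A nilmanifold has vanishing \(L^2\)-torsion, so the fibration formula for \(L^2\)-torsion gives \(\rho^{(2)}(\Gamma_\Qq) = 0\); applying the induction-from-a-subgroup formula for \(L^2\)-invariants, each \(\Gamma\)-orbit \(\Gamma\times_{\Gamma_\Qq}\overline{e(\Qq)}\) occurring inside \(\partial\overline{X}\) is \(\det\)-\(L^2\)-acyclic over \(\mathcal{N}(\Gamma)\) with vanishing \(L^2\)-torsion. Because faces of the manifold with corners \(W\) intersect again in faces, I would then induct over the face poset (equivalently over the finitely many \(\Gamma\)-conjugacy classes of proper parabolics), applying the Mayer--Vietoris sum formula for \(L^2\)-torsion at each stage: every intermediate union and every intersection is a union of such orbits, hence \(L^2\)-acyclic over \(\mathcal{N}(\Gamma)\) with vanishing torsion, so the formula propagates \(\rho^{(2)} = 0\) up to \(\partial\overline{X}\).

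\emph{Conclusion and main obstacle.} The long exact \(L^2\)-homology sequence of the pair shows that \((\overline{X},\partial\overline{X})\) is \(L^2\)-acyclic over \(\mathcal{N}(\Gamma)\), so additivity of \(L^2\)-torsion gives \(\rho^{(2)}(\overline{X},\partial\overline{X}) = \rho^{(2)}(\overline{X}) - \rho^{(2)}(\partial\overline{X}) = \rho^{(2)}(\overline{X})\). On the other hand Poincar\'e--Lefschetz duality for the \(L^2\)-torsion of the cocompact free \(\Gamma\)-manifold with boundary \(\overline{X}\) gives \(\rho^{(2)}(\overline{X}) = (-1)^{\dim X + 1}\rho^{(2)}(\overline{X},\partial\overline{X})\). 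Here the evenness of \(\delta(G)\) enters through the elementary congruence \(\dim X = \bigl(\rank_\C G - \rank_\C K\bigr) + \bigl(\abs{\Phi_G} - \abs{\Phi_K}\bigr) \equiv \delta(G) \pmod{2}\), which holds because a root system has an even number of roots; thus \(\dim X\) is even and the two identities force \(\rho^{(2)}(\overline{X}) = -\rho^{(2)}(\overline{X})\), i.e. \(\rho^{(2)}(\Gamma) = 0\) and \(\rho^{(2)}_{\textup{virt}}(\Gamma) = 0\). I expect the technical heart --- and the main obstacle --- to be the third step: establishing the fibred structure of every face of \(W\) and of all their intersections, feeding it into the fibration formula for \(L^2\)-torsion (replacing the noncompact locally symmetric bases by their own Borel--Serre compactifications), and running the Mayer--Vietoris induction over the face poset of \(W\) while keeping determinant class and \(L^2\)-acyclicity over \(\mathcal{N}(\Gamma)\) under control throughout.
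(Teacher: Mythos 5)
Your proposal is correct and takes essentially the same route as the paper: reduction via Selberg's Lemma, the product formula and Margulis arithmeticity to torsion-free arithmetic lattices; vanishing of the torsion of the Borel--Serre boundary because each closed face is a nilmanifold bundle over a lower Borel--Serre compactification (fibration formula plus the elementary-amenable vanishing result), propagated by additivity; and the conclusion from even-dimensionality of \(X\) (equivalent to even deficiency) via duality. The only cosmetic differences are that the paper organizes the boundary induction by split rank, treating the gluing loci \(\partial\overline{Y_k}\) by Poincar\'e duality rather than by the face poset, and quotes the identity \(\rho^{(2)}(\overline{X}) = \tfrac{1}{2}\,\rho^{(2)}(\partial\overline{X})\) for even-dimensional manifolds with boundary (\fullref{lemma:l2torsionmanifoldwithboundary}) instead of re-deriving it from duality and additivity as you do.
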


Note that this is a statement about higher rank Lie groups because \(\rank_\R(G) \geq \delta(G) \geq 2\).  For example \(\rho^{(2)}_{\textup{virt}}(\textup{SL}(n;\Z)) = 0\) if \(n > 2\) and \(n = 1 \text{ or } 2 \!\mod 4\).

The computation of \(L^2\)-invariants is a worthwhile challenge in itself.  Yet we want to convince the reader that the problem is not isolated within the mathematical landscape.  The following conjecture goes back to M.\,Gromov.  We state it in a version that appears in \cite{Lueck:L2-Invariants}*{p.\,437}.

\begin{conjecture}[Zero-in-the-spectrum Conjecture] \label{conj:zerointhespectrum}
Let \(M\) be a closed aspherical Riemannian manifold.  Then there is \(p \geq 0\) such that zero is in the spectrum of the minimal closure of the Laplacian
\[ (\Delta_p)_{\textup{min}}\co \textup{dom}((\Delta_p)_{\textup{min}}) \subset L^2 \Omega^p (\widetilde{M}) \rightarrow L^2 \Omega^p(\widetilde{M}) \]
acting on \(p\)-forms of the universal covering \(\widetilde{M}\) with the induced metric.
\end{conjecture}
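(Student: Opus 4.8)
The plan is to establish \fullref{conj:zerointhespectrum} for the locally symmetric spaces of interest here, i.e. for the Borel--Serre compactification $\Gamma\backslash\overline{X}$ of an arithmetically defined locally symmetric space. The starting point is the standard reformulation (see \cite{Lueck:L2-Invariants}): for a closed aspherical Riemannian manifold $M$, zero fails to lie in $\mathrm{spec}((\Delta_p)_{\textup{min}})$ for \emph{every} $p\geq 0$ if and only if $b^{(2)}_p(\pi_1 M)=0$ and $\widetilde{\alpha}_p(\pi_1 M)=\infty^+$ for all $p$ --- for if $b^{(2)}_p=0$ then the Hilbert module $\ker\Delta_p$ already vanishes, and $\widetilde{\alpha}_p=\infty^+$ says exactly that the spectral density function of $\Delta_p$ is constantly zero near the origin, so that $0$ is isolated from $\mathrm{spec}(\Delta_p)$. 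Hence it suffices to exhibit, for the groups at hand, either one nonvanishing $L^2$-Betti number or one \emph{finite} Novikov--Shubin invariant. Since $\Gamma\backslash\overline{X}$ is a compact manifold with corners rather than a closed manifold, one works with the finite-CW form of \fullref{conj:zerointhespectrum}, applied to the finite classifying space $B\Gamma=\Gamma\backslash\overline{X}$ (after passing to a torsion-free finite-index subgroup by Selberg's Lemma); the reformulation above holds verbatim in this setting, and both $L^2$-invariants are homotopy invariants.

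One then argues according to the deficiency. If $\delta(G)=0$, the Cheeger--Gromov computation recalled in the introduction gives $b^{(2)}_q(\Gamma)\neq 0$ with $\dim X=2q$, so $\ker\Delta_q\neq 0$ and $0\in\mathrm{spec}(\Delta_q)$. If $\delta(G)>0$ and $\Gamma\subset\Gg(\Q)$ is arithmetic with $\rank_\Q(\Gg)=1$, then every $b^{(2)}_p(\Gamma)$ vanishes, but \fullref{thm:novikovshubinqrankone} yields $\widetilde{\alpha}_q(\Gamma)\leq\delta(M_\Pp)+d(N_P)<\infty$, so $\widetilde{\alpha}_q(\Gamma)\neq\infty^+$ and again $0\in\mathrm{spec}(\Delta_q)$. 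In either case the conjecture holds; this already settles it for every nonuniform arithmetic lattice of $\Q$-rank one, and in particular confirms it for the lattices $\Gamma\subset\textup{SL}(4,\R)$ constructed after \fullref{thm:novikovshubinqrankone}.

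The remaining range $\delta(G)>0$, $\rank_\Q(\Gg)\geq 2$ is, I expect, the genuine obstacle. The natural strategy is to induct along the Borel--Serre boundary: a maximal proper rational parabolic $\Pp$ contributes a face $e(\Pp)=N_P\times X_\Pp$ with a proper cocompact action of $\Gamma_\Pp=\Gamma\cap\Pp$, whose quotient fibres over the locally symmetric space of $M_\Pp$ with nilmanifold fibre $(\Gamma\cap N_P)\backslash N_P$; as nilmanifolds have finite Novikov--Shubin invariants, this furnishes a finite Novikov--Shubin invariant for $\Gamma_\Pp$. The hard part is to propagate finiteness from this boundary stratum to the ambient group $\Gamma$ through the long exact ($L^2$-Mayer--Vietoris) sequence of the pair $(\overline{X},\partial\overline{X})$ --- one must understand the behaviour of Novikov--Shubin numbers across the connecting homomorphisms, and one must also handle the case in which a Borel--Serre face carries a $\Q$-anisotropic (hence uniform) factor whose own $L^2$-invariants enter. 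Failing such an argument, one would state the zero-in-the-spectrum result only in the cases $\delta(G)=0$ or $\rank_\Q(\Gg)\leq 1$ settled above, which is in any event new for all nonuniform $\Q$-rank-one lattices.
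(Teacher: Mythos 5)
There is a fundamental mismatch here: the statement you were asked about is Gromov's Zero-in-the-Spectrum Conjecture itself, which is an open problem. The paper does not prove it and does not claim to --- it is quoted purely as motivation, and the paper immediately points out that locally symmetric spaces \(\Gamma \backslash X\) of nonuniform lattices are not even instances of it, since they are not closed manifolds (and the Borel--Serre compactification \(\Gamma \backslash \overline{X}\) is a compact manifold with corners, not a closed aspherical manifold). That is exactly why the paper passes to L\"uck's more general \fullref{quest:lueck} about the nonvanishing of some \(H^\Gamma_p(E\Gamma; \mathcal{N}(\Gamma))\) for groups with finite \(B\Gamma\), and what it actually proves is \fullref{thm:l2torsionmeasureequivalence}'s companion result, namely that the answer to \fullref{quest:lueck} is affirmative for torsion-free arithmetic subgroups of \(\Q\)-rank one groups. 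Your proposal does not close this gap either: it proves the conjecture's conclusion only in the cases \(\delta(G) = 0\) or \(\rank_\Q(\Gg) \leq 1\), explicitly concedes that the range \(\delta(G) > 0\), \(\rank_\Q(\Gg) \geq 2\) is unresolved, and even in the settled cases you are really answering \fullref{quest:lueck} rather than the conjecture as stated, since the spaces involved are not closed. So as a proof of the statement given, the attempt is incomplete by its own admission; no amount of Mayer--Vietoris bookkeeping along the Borel--Serre boundary is known to propagate finiteness of Novikov--Shubin invariants in higher \(\Q\)-rank, and the paper does not attempt this.

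That said, the portion you do establish coincides, both in content and in method, with what the paper proves: vanishing \(L^2\)-Betti numbers are handled by Cheeger--Gromov (\fullref{thm:l2bettiofgenerallattices}), and the finite middle Novikov--Shubin invariant in the \(\Q\)-rank one case comes from \fullref{thm:novikovshubinqrankone} via the standard equivalence that \(H^\Gamma_p(E\Gamma; \mathcal{N}(\Gamma)) = 0\) for all \(p\) if and only if all \(b^{(2)}_p(\Gamma)\) vanish and all \(\widetilde{\alpha}_p(\Gamma) = \infty^+\). If you reframe your write-up as a proof of the paper's theorem answering \fullref{quest:lueck} for \(\Q\)-rank one arithmetic groups (after Selberg's Lemma and commensurability invariance), rather than as an attack on \fullref{conj:zerointhespectrum} itself, it is correct and is essentially the paper's own argument.
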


The conjecture has gained interest due to its relevance for seemingly unrelated questions.  For one example, the zero-in-the-spectrum conjecture for \(M\) with \(\Gamma = \pi_1(M)\) is a consequence of the \emph{strong Novikov conjecture} for \(\Gamma\) which in turn is contained in the \emph{Baum--Connes conjecture} for \(\Gamma\).  Following the survey \cite{Lueck:L2-Invariants}*{Chapter 12}, let us choose a \(\Gamma\)-triangulation \(X\) of \(\widetilde{M}\).  We define the homology \(\mathcal{N}(\Gamma)\)-module \(H^\Gamma_p(X; \mathcal{N}(\Gamma)) = H_p(\mathcal{N}(\Gamma) \otimes_{\Z \Gamma} C_*(X))\) where we view the group von Neumann algebra \(\mathcal{N}(\Gamma)\) as a discrete ring.  Then the zero-in-the-spectrum conjecture has the equivalent algebraic version that for some \(p \leq \dim M\) the homology \(H^\Gamma_p(X; \mathcal{N}(\Gamma))\) does not vanish.  \(L^2\)-invariants enter the picture in that for a general finite \(\Gamma\)-CW complex \(X\) we have \(H^\Gamma_p(X; \mathcal{N}(\Gamma)) = 0\) for \(p \geq 0\) if and only if \(b^{(2)}_p(X) = 0\) and \(\widetilde{\alpha}_p(X) = \infty^+\) for \(p \geq 0\).  Therefore Olbrich's \fullref{thm:l2symmetricspaces} implies that closed locally symmetric spaces \(\Gamma \backslash X\) coming from uniform lattices satisfy the conjecture.  The statement of the conjecture does not immediately include locally symmetric spaces \(\Gamma \backslash X\) coming from nonuniform lattices because they are not compact.  Therefore W.\,L\"uck has asked the following more general question, see \cite{Lueck:L2-Invariants}*{p.\,440}.

\begin{question} \label{quest:lueck}
If a group \(\Gamma\) has a finite CW-model for \(B\Gamma\), is there \(p \geq 0\) such that \(H^\Gamma_p(E\Gamma; \mathcal{N}(\Gamma))\) does not vanish?
\end{question}

Now this question makes sense for nonuniform lattices, and as we said, \(L^2\)-Betti numbers and Novikov--Shubin invariants provide a way to answer it.  In our case \fullref{thm:novikovshubinqrankone} combined with \fullref{thm:l2bettiofgenerallattices} by Cheeger--Gromov gives the following result.

\begin{theorem}
The answer to \textup{\fullref{quest:lueck}} is affirmative for torsion-free arithmetic subgroups of connected semisimple linear algebraic \(\Q\)-groups \(\Gg\) with \(\rank_\Q(\Gg) = 1\).
\end{theorem}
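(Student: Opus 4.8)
The plan is to deduce the theorem formally from the computational results \fullref{thm:l2bettiofgenerallattices} and \fullref{thm:novikovshubinqrankone}, by way of the homological reformulation of \fullref{quest:lueck} recalled in the introduction. So fix a torsion-free arithmetic subgroup \(\Gamma \subset \Gg(\Q)\) of a connected semisimple linear algebraic \(\Q\)-group \(\Gg\) with \(\rank_\Q(\Gg) = 1\), and put \(G = \Gg(\R)\), \(X = G/K\). Since \(\rank_\Q(\Gg) = 1 > 0\), the group \(\Gg\) is \(\Q\)-isotropic and hence carries a proper rational parabolic subgroup \(\Pp\), so the Borel--Serre bordification \(\overline{X}\) is a genuine bordification of \(X\); as \(\Gamma\) is torsion-free it acts freely, properly and cocompactly on the contractible space \(\overline{X}\), whence \(\Gamma \backslash \overline{X}\) is a finite CW-model for \(B\Gamma\) and \(\overline{X}\) is a model for \(E\Gamma\). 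In particular \(\Gamma\) satisfies the hypothesis of \fullref{quest:lueck}. Because \(L^2\)-Betti numbers and Novikov--Shubin invariants are homotopy invariants, I may compute them on the finite \(\Gamma\)-CW complex \(\overline{X}\), and by the algebraic reformulation it then suffices to exhibit one degree \(p \geq 0\) with \(b^{(2)}_p(\Gamma) \neq 0\) or \(\widetilde{\alpha}_p(\Gamma) \neq \infty^+\).

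I would then split into cases according to the deficiency \(\delta(G)\). If \(\delta(G) = 0\), then \(\dim X\) is even: on the compact dual symmetric space one has \(\dim X \equiv \rank_\C G - \rank_\C K = \delta(G) \bmod 2\), which follows from \(\dim U \equiv \rank U \bmod 2\) for a compact Lie group \(U\). Writing \(\dim X = 2p_0\), \fullref{thm:l2bettiofgenerallattices} of Cheeger--Gromov gives \(b^{(2)}_{p_0}(\Gamma) \neq 0\), and hence \(H^\Gamma_{p_0}(E\Gamma; \mathcal{N}(\Gamma)) \neq 0\). If \(\delta(G) > 0\), then \(\Gg\) is connected semisimple with \(\rank_\Q(\Gg) = 1\) and \(\delta(\Gg(\R)) > 0\), and \(\Pp\) is a proper rational parabolic, so all hypotheses of \fullref{thm:novikovshubinqrankone} are met. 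That theorem yields, for the middle dimension \(q\) of \(X\),
\[ \widetilde{\alpha}_q(\Gamma) \leq \delta(M_\Pp) + d(N_P) < \infty, \]
so in particular \(\widetilde{\alpha}_q(\Gamma) \neq \infty^+\) and therefore \(H^\Gamma_q(E\Gamma; \mathcal{N}(\Gamma)) \neq 0\). In either case the desired degree exists.

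Since the real content is shifted onto \fullref{thm:novikovshubinqrankone}, whose proof via the Borel--Serre compactification is the technical core of the paper, I do not expect a serious obstacle here. The points that do require attention are the parity observation \(\dim X \equiv \delta(G) \bmod 2\) --- without it the case \(\delta(G) = 0\) need not produce an even-dimensional \(X\), hence need not produce a nonvanishing \(L^2\)-Betti number --- and the remark that \(\Q\)-rank one simultaneously supplies the finite classifying space required in \fullref{quest:lueck} and the proper rational parabolic required in \fullref{thm:novikovshubinqrankone}. The customary minor bookkeeping of passing between \(\Gg(\R)\) and its identity component changes neither the homotopy type nor any of the \(L^2\)-invariants involved.
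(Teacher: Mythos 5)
Your proposal is correct and is essentially the paper's own argument: the paper proves this theorem simply by combining \fullref{thm:novikovshubinqrankone} with \fullref{thm:l2bettiofgenerallattices} via the homological reformulation of \fullref{quest:lueck}, exactly as you do. Your case split on \(\delta(G)\) and the parity remark \(\dim X \equiv \delta(G) \bmod 2\) just make explicit what the paper leaves implicit (it notes \(n-m\) is always even after \fullref{thm:l2symmetricspaces}), and your observation that \(\Q\)-rank one supplies both the finite \(B\Gamma\) (Borel--Serre) and the proper rational parabolic is the same bookkeeping the paper relies on.
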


In a different direction, D.\,Gaboriau has proven in the far-reaching paper \cite{Gaboriau:Invariantsl2} that if \(\Gamma\) and \(\Lambda\) are measure equivalent groups of index \(c\) in the sense of M.\,Gromov, then \(b^{(2)}_p(\Gamma) = c \cdot b^{(2)}_p(\Lambda)\).  For obvious reasons nothing similar can be true for Novikov--Shubin invariants but for the \(L^2\)-torsion we have the following conjecture.

\begin{conjecture}[L\"uck--Sauer--Wegner] \label{conj:luecksauerwegner}
Let \(\Gamma\) and \(\Lambda\) be \(\det\)-\(L^2\)-acyclic groups.  Assume that \(\Gamma\) and \(\Lambda\) are measure equivalent of index \(c\).  Then \(\rho^{(2)}(\Gamma) = c \cdot \rho^{(2)}(\Lambda)\).
\end{conjecture}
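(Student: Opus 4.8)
The plan is to deduce the case of \fullref{conj:luecksauerwegner} covered by this paper — namely the one where \(\Gamma\) and \(\Lambda\) are lattices in connected semisimple Lie groups of positive, even deficiency — directly from \fullref{thm:virtuall2torsion}, together with Gaboriau's measure equivalence invariance of \(L^2\)-Betti numbers. First I would reduce to torsion-free groups: by Selberg's Lemma \(\Gamma\) and \(\Lambda\) contain torsion-free finite-index subgroups \(\Gamma_0\) and \(\Lambda_0\), these inherit the measure coupling, and the index \(c\) is replaced by \(c_0 = c\,[\Lambda : \Lambda_0]\,[\Gamma : \Gamma_0]^{-1}\). Since \(L^2\)-torsion is multiplicative under finite coverings, the equality \(\rho^{(2)}(\Gamma) = c \cdot \rho^{(2)}(\Lambda)\) — read through \(\rho^{(2)}_{\textup{virt}}\) if torsion is present — is equivalent to \(\rho^{(2)}(\Gamma_0) = c_0\,\rho^{(2)}(\Lambda_0)\), so it suffices to treat the torsion-free case.

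Next I would apply \fullref{thm:virtuall2torsion} on both sides. As the two ambient Lie groups have positive, even deficiency, \(\Gamma_0\) and \(\Lambda_0\) are \(\det\)-\(L^2\)-acyclic with \(\rho^{(2)}(\Gamma_0) = \rho^{(2)}_{\textup{virt}}(\Gamma_0) = 0\), and likewise \(\rho^{(2)}(\Lambda_0) = 0\). The proportionality equation then reads \(0 = c_0 \cdot 0\), which holds trivially; undoing the rescaling gives the assertion for \(\Gamma\) and \(\Lambda\). Along the way one should check that the hypothesis of \fullref{conj:luecksauerwegner} is really satisfied after the reduction — that \(\det\)-\(L^2\)-acyclicity, not merely \(L^2\)-acyclicity, holds — where the vanishing of all \(b^{(2)}_p\) is supplied by \fullref{thm:virtuall2torsion} (and is transported across the coupling by Gaboriau), while the determinant class condition holds because the lattices in question are residually finite, hence sofic, so satisfy the Determinant Conjecture.

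The step I expect to require the most care is the bookkeeping around the index of a measure coupling and its multiplicativity under restriction to finite-index subgroups, which is what justifies the passage from \(c\) to \(c_0\); this is a standard but slightly technical point about fundamental domains of the coupling. If one wanted instead the stronger statement that \emph{every} group measure equivalent to such a \(\Gamma\) satisfies the proportionality with it, the genuine obstacle would be to show that the partner \(\Lambda\) is again (virtually) a lattice in a semisimple Lie group of even deficiency, which would call for Furman-type measure equivalence rigidity and is available only for irreducible higher-rank \(\Gamma\). For the statement with the even-deficiency hypothesis imposed on both sides, however, there is no further obstacle — the entire content has been absorbed into \fullref{thm:virtuall2torsion}.
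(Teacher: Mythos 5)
The statement you were asked about is an open conjecture, and the paper does not claim to prove it in general; what the paper proves is \fullref{thm:l2torsionmeasureequivalence}, namely that the conjecture holds for the class \(\mathcal{L}^{\textup{even}}\) of \(\det\)-\(L^2\)-acyclic groups that are merely \emph{measure equivalent} to a lattice in a connected simple linear Lie group of even deficiency. Your argument correctly handles the sub-case in which both \(\Gamma\) and \(\Lambda\) are themselves (virtually) lattices in such groups: there the content is exactly the paper's, i.e.\ \fullref{thm:virtuall2torsion} makes both torsions vanish and the proportionality reads \(0 = c\cdot 0\). Two small remarks on that part: since the conjecture only concerns \(\det\)-\(L^2\)-acyclic groups, which by definition admit a finite \(B\Gamma\) and hence are torsion-free, the Selberg/virtual-index bookkeeping is not actually needed under the conjecture's hypotheses (the paper uses this observation in the opposite direction, to feed Furman's theorem); and the determinant-class issue is handled in the paper simply because arithmetic lattices are residually finite, hence lie in Schick's class \(\mathcal{G}\).

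The genuine gap, measured against what the paper establishes, is the step you explicitly defer: passing from ``\(\Gamma\) is measure equivalent to an even-deficiency lattice'' to ``\(\Gamma\) is itself such a lattice.'' The paper's proof of \fullref{thm:l2torsionmeasureequivalence} does exactly this, in two moves: first, Gaboriau's proportionality for \(L^2\)-Betti numbers together with \fullref{thm:l2bettiofgenerallattices} forces \(\delta(G)>0\) (so the deficiency is positive \emph{and} even); second, since \(\Gamma\) has a finite \(B\Gamma\) it is torsion-free, and Furman's measure equivalence rigidity (which is why the class is defined with \emph{simple}, higher-rank \(G\), and even deficiency guarantees \(\rank_\R G\geq 2\)) shows \(\Gamma\) is a lattice in \(\operatorname{Ad}G\), so that \fullref{thm:virtuall2torsion} applies to \(\Gamma\) directly. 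Without this rigidity input your argument only verifies the conjecture for pairs of lattices, which is strictly weaker than the paper's theorem; and of course neither your argument nor the paper's says anything about the conjecture beyond classes where the torsion is forced to vanish on both sides, since the ``\(0=c\cdot 0\)'' mechanism cannot detect the proportionality constant.
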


This conjecture appears in \cite{Lueck-Sauer-Wegner:L2-torsion}*{Conjecture 1.2} where it is proven to hold true if measure equivalence is replaced by the way more rigorous notion of \emph{uniform measure equivalence}.  Regarding the original \fullref{conj:luecksauerwegner} the authors state that evidence comes from the similar formal behavior of Euler characteristic and \(L^2\)-torsion as well as from computations.  Our \fullref{thm:virtuall2torsion} together with a rigidity theorem due to Furman \cite{Furman:MERigidity} adds the following piece of evidence.

\begin{theorem} \label{thm:l2torsionmeasureequivalence}
Let \(\mathcal{L}^{\textup{even}}\) be the class of \(\det\)-\(L^2\)-acyclic groups that are measure equivalent to a lattice in a connected simple linear Lie group with even deficiency.  Then \textup{\fullref{conj:luecksauerwegner}} holds true for \(\mathcal{L}^{\textup{even}}\).
\end{theorem}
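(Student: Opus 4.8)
The plan is to prove something formally stronger: that \(\rho^{(2)}(\Gamma) = 0\) for \emph{every} group \(\Gamma \in \mathcal{L}^{\textup{even}}\). Then \fullref{conj:luecksauerwegner} for \(\mathcal{L}^{\textup{even}}\) follows at once, both sides of the asserted proportionality being zero regardless of the index \(c\). So let \(\Gamma\) be a \(\det\)-\(L^2\)-acyclic group that is measure equivalent to a lattice \(\Gamma_1\) in a connected simple linear Lie group \(G_1\) of even deficiency, and write \(X_1\) for the symmetric space of \(G_1\). The first step is to rule out \(\delta(G_1) = 0\). If \(\delta(G_1)\) were zero, then \(\dim X_1\) would be even (equal complex rank of \(G_1\) and a maximal compact subgroup forces even dimension), so \fullref{thm:l2bettiofgenerallattices} would give \(b^{(2)}_{\dim X_1 / 2}(\Gamma_1) \neq 0\); by Gaboriau's proportionality theorem \cite{Gaboriau:Invariantsl2} the same would hold for \(\Gamma\), contradicting that \(\Gamma\) is \(L^2\)-acyclic. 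Hence \(\delta(G_1)\) is positive and even, so \(\rank_\R(G_1) \geq \delta(G_1) \geq 2\) and in particular \(G_1\) is noncompact.

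The decisive step is to invoke the measure equivalence rigidity theorem of Furman \cite{Furman:MERigidity}. Since \(G_1\) is simple, noncompact, with finite center and real rank at least two, every lattice in it is irreducible, and Furman's theorem tells us that \(\Gamma\), being measure equivalent to \(\Gamma_1\), is virtually isomorphic to a lattice in \(G_1\) up to finite kernels: there are a finite-index subgroup \(\Gamma' \subset \Gamma\), a lattice \(B' \subset G_1\) and a finite normal subgroup \(F \triangleleft B'\) together with an isomorphism \(\Gamma' \cong B'/F\). On the \(\Gamma\)-side no finite kernel appears, because a \(\det\)-\(L^2\)-acyclic group has a finite model for its classifying space and is therefore torsion-free.

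It remains to assemble the pieces. By \fullref{thm:virtuall2torsion}, applied to the linear group \(G_1\), the lattice \(B'\) is virtually \(\det\)-\(L^2\)-acyclic with \(\rho^{(2)}_{\textup{virt}}(B') = 0\); in particular any torsion-free finite-index subgroup \(B'' \subset B'\) — one exists by Selberg's lemma \cite{Alperin:SelbergsLemma} — is \(\det\)-\(L^2\)-acyclic with \(\rho^{(2)}(B'') = [B':B''] \cdot \rho^{(2)}_{\textup{virt}}(B') = 0\). Being torsion-free, \(B''\) meets the finite subgroup \(F\) trivially, so the quotient homomorphism \(B' \to B'/F \cong \Gamma'\) maps \(B''\) isomorphically onto a subgroup of \(\Gamma'\) of index at most \([B':B'']\); this subgroup is again \(\det\)-\(L^2\)-acyclic with vanishing \(L^2\)-torsion, and it has finite index in \(\Gamma\). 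Since \(\Gamma\) is itself \(\det\)-\(L^2\)-acyclic, multiplicativity of \(L^2\)-torsion under finite coverings forces \(\rho^{(2)}(\Gamma) = 0\), which completes the argument. The only serious obstacle is the appeal to Furman's theorem: measure equivalence by itself is far too coarse to govern \(L^2\)-torsion — that is exactly the content of \fullref{conj:luecksauerwegner} — and the whole point is that measure equivalence to a higher rank lattice already forces \(\Gamma\) to be, virtually and up to finite kernels, such a lattice; the remaining bookkeeping with \(L^2\)-torsion under passage to finite-index subgroups is routine.
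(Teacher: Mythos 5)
Your argument is correct and follows essentially the paper's own route: Gaboriau's proportionality rules out \(\delta(G_1)=0\), Furman's measure equivalence rigidity replaces \(\Gamma\) by an honest lattice, and then \fullref{thm:virtuall2torsion} together with multiplicativity of \(L^2\)-torsion under finite coverings gives \(\rho^{(2)}(\Gamma)=0\), so the conjectured identity holds trivially on \(\mathcal{L}^{\textup{even}}\). Two minor points: Furman's Theorem~3.1 actually provides an exact sequence \(1\to F\to\Gamma\to\overline{\Lambda}\to 1\) with \(F\) finite and \(\overline{\Lambda}\) a lattice in \(\operatorname{Ad} G_1\), so torsion-freeness of \(\Gamma\) makes \(\Gamma\) itself such a lattice and \fullref{thm:virtuall2torsion} applies directly (the paper's shortcut, avoiding your finite-kernel bookkeeping); and your claim that \emph{every} torsion-free finite-index subgroup \(B''\subset B'\) is \(\det\)-\(L^2\)-acyclic overstates what ``virtually \(\det\)-\(L^2\)-acyclic'' gives (type \(F\) is not known to pass to arbitrary commensurable torsion-free groups)---instead take the specific finite-index subgroup furnished by \fullref{thm:virtuall2torsion}, which is automatically torsion-free, after which the rest of your argument goes through unchanged.
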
   

Of course in fact \(\rho^{(2)}(\Gamma) = 0 \) for all \(\Gamma \in \mathcal{L}^{\textup{even}}\), which one might find unfortunate.  On the other hand, \(\mathcal{L}^{\textup{even}}\) contains various complete measure equivalence classes of \(\det\)-\(L^2\)-acyclic groups so that \fullref{thm:l2torsionmeasureequivalence} certainly has substance.  Gaboriau points out in \cite{Gaboriau:Examples}*{p.\,1810} that apart from amenable groups and lattices in connected simple linear Lie groups of higher rank, no more measure equivalence classes of groups have completely been understood so far.

In order to prove Theorems~\ref{thm:novikovshubinqrankone} and \ref{thm:virtuall2torsion} a close understanding of the Borel--Serre compactification is indispensable.  Thus we will give a detailed exposition in \fullref{sec:borelserre} of this paper.  We will closely follow the presentation in \cite{Borel-Ji:Compactifications}*{Chapter III.9} but unlike there, we include disconnected algebraic groups and give a sharpened version of \cite{Borel-Ji:Compactifications}*{Lemma~III.16.2, p.\,371} in \fullref{prop:closureofep} to stress the recursive character of the construction.  \fullref{sec:l2invariants} gives a brief introduction to \(L^2\)-invariants and their basic properties.  \fullref{sec:l2ofborelserre} forms the main part of this article where we compute \(L^2\)-invariants of the Borel--Serre compactification and conclude the results as presented in this introduction.  The strategy is to reduce the computation of \(L^2\)-invariants from the entire Borel--Serre bordification  \(\overline{X}\) to the boundary components \(e(\Pp)\).  If \(\Pp\) is minimal parabolic, then a certain subgroup of \(\Gamma\) acts cocompactly on \(e(\Pp)\) so that the results of Olbrich can be applied to the boundary symmetric space \(X_\Pp\) whereas the nilpotent factor \(N_P\) can be dealt with by results of M.\,Rumin and C.\,Wegner.  The material in this article is part of the author's doctoral thesis \cite{Kammeyer:L2-invariants}. It was written within the project ``\(L^2\)-invariants and quantum groups'' funded by the German Research Foundation (DFG).  I wish to thank my advisor Thomas Schick for many helpful suggestions.

\section{Borel-Serre compactification}
\label{sec:borelserre}

In this section we introduce the Borel--Serre compactification of a locally symmetric space mostly following the modern treatment by A.\,Borel and L.\,Ji \cite{Borel-Ji:Compactifications}*{Chapter III.9, p.\,326}.  The outline is as follows.  In \fullref{sec:arithmeticsubgroups} we recall basic notions of linear algebraic groups, their arithmetic subgroups and associated locally symmetric spaces.  \fullref{sec:rationalparabolicsubgroups} studies rational parabolic subgroups and their Langlands decompositions.  These induce horospherical decompositions of the symmetric space.  We classify rational parabolic subgroups up to conjugacy in terms of parabolic roots.  \fullref{sec:bordification} introduces and examines the bordification, a contractible manifold with corners which contains the symmetric space as an open dense set.  In \fullref{sec:quotients} we see that the group action extends cocompactly to the bordification.  The compact quotient gives the desired Borel--Serre compactification.  We will examine its constituents to some detail.

\subsection{Algebraic groups and arithmetic subgroups}
\label{sec:arithmeticsubgroups}

Let \(\Gg \subset \textup{GL}(n,\C)\) be a reductive linear algebraic group defined over \(\Q\) satisfying the following two conditions.
\begin{enumerate}[(I)]
\item \label{cond:anisotropiccenter} We have \(\chi^2 = 1\) for all \(\chi \in X_\Q(\Gg)\).
\item \label{cond:centralizermeetscomponents} The centralizer \(\mathcal{Z}_\Gg(\mathbf{T})\) of each maximal \(\Q\)-split torus \(\mathbf{T} \subset \Gg\) meets every connected component of \(\Gg\).
\end{enumerate}

This class of groups appears in \cite{Harish-Chandra:Automorphic}*{p.1}.  Condition (\ref{cond:anisotropiccenter}) implies that the group \(X_\Q(\Gg^0)\) of \(\Q\)-characters on the unit component of \(\Gg\) is trivial.  Thus \(\Gg\) has no central \(\Q\)-split torus.  Note that the structure theory of reductive algebraic groups is usually derived for connected groups, see for example \cite{Borel:LinearAlgebraicGroups}*{Chapter IV}.  But if one tries to enforce condition (\ref{cond:anisotropiccenter}) for a connected reductive \(\Q\)-group \(\Hh\) by going over to \(\bigcap_{\chi \in X_\Q(\Hh)} \ker \chi^2\), the resulting group will generally be disconnected.  That is why we impose the weaker condition (\ref{cond:centralizermeetscomponents}) which will turn out to be good enough for our purposes. 

A subgroup \(\Gamma \subset \Gg(\Q)\) is called \emph{arithmetic} if it is commensurable with \(\Gg(\Z)\).  This means \(\Gamma \cap \Gg(\Z)\) has finite index both in \(\Gamma\) and in \(\Gg(\Z)\).  The real points \(G = \Gg(\R)\) form a reductive Lie group with finitely many connected components \cite{Borel:LinearAlgebraicGroups}*{Section 24.6(c)(i), p.\,276}.  By a theorem of A.\,Borel and Harish-Chandra \cite{Borel-Harish-Chandra:Arithmetic}*{Theorem~9.4, p.\,522} condition (\ref{cond:anisotropiccenter}) implies that an arithmetic subgroup \(\Gamma \subset \Gg(\Q)\) is a \emph{lattice} in \(G\), which means the quotient space \(G/\Gamma\) has finite \(G\)-invariant measure.  Selberg's Lemma \cite{Alperin:SelbergsLemma} says that \(\Gamma\) has torsion-free subgroups of finite index.  We want to assume that \(\Gamma\) is torsion-free to begin with.  This ensures that \(\Gamma\) acts freely and properly from the left on the \emph{symmetric space} \(X = G / K\) where \(K\) is a maximal compact subgroup of \(G\).

Corresponding to \(K\) there is a \emph{Cartan involution} \(\theta_K\) on \(G\) which extends to an algebraic involution of \(\Gg\) \cite{Borel-Serre:Corners}*{Definition 1.7, p.\,444}.  If \(G\) is semisimple, \(\theta_K\) is the usual Cartan-involution.  The symmetric space \(X\) is connected because \(K\) meets every connected component of \(G\).  In general, it is the product of a symmetric space of noncompact type and a Euclidean factor.  The quotient \(\Gamma \backslash X = \Gamma \backslash G / K\) is called a \emph{locally symmetric space}.  The locally symmetric space \(\Gamma \backslash X\) is a connected finite-volume Riemannian manifold and in fact a \emph{classifying space} for \(\Gamma\) because its universal covering \(X\) is contractible.  The quotient \(\Gamma \backslash G\) or equivalently the locally symmetric space \(\Gamma \backslash X\) is compact if and only if \(\rank_\Q(\Gg) = 0\).

\subsection{Rational parabolic subgroups}
\label{sec:rationalparabolicsubgroups}

If \(\Gg\) is connected, a closed \(\Q\)-subgroup \(\Pp \subset \Gg\) is called a \emph{rational parabolic subgroup} if \(\Gg / \Pp\) is a complete (equivalently projective) variety.  If \(\Gg\) is not connected, we say that a closed \(\Q\)-subgroup \(\Pp \subset \Gg\) is a \emph{rational parabolic subgroup} if it is the normalizer of a rational parabolic subgroup of \(\Gg^0\).  These definitions are compatible because rational parabolic subgroups of connected groups are self-normalizing.  It is clear that \(\Pp^0 = \Pp \cap \Gg^0\), and condition (\ref{cond:centralizermeetscomponents}) on \(\Gg\) ensures that \(\Pp\) meets every connected component of \(\Gg\) \cite{Harish-Chandra:Automorphic}*{Lemma~1, p.\,2}, so \(\Gg / \Pp\) is complete.

Given a rational parabolic subgroup \(\Pp \subset \Gg\) we set \(\Nn_\Pp = \mathbf{R}_u(\Pp)\) and we denote by \(\Ll_\Pp = \Pp / \Nn_\Pp\) the \emph{Levi quotient} of \(\Pp\).  Let \(\Ss_\Pp \subset \Ll_\Pp\) be the maximal central \(\Q\)-split torus and set \(\Mm_\Pp = \bigcap_{\chi \in X_\Q(\Ll_\Pp)} \ker \chi^2\) where \(X_\Q(\Ll_\Pp)\) denotes the group of \(\Q\)-characters of \(\Ll_\Pp\).  The \(\Q\)-group \(\Mm_\Pp\) is reductive and satisfies conditions (\ref{cond:anisotropiccenter}) and (\ref{cond:centralizermeetscomponents}).  It complements \(\Ss_\Pp\) as an \emph{almost direct product}  in \(\Ll_\Pp\) \cite{Harish-Chandra:Automorphic}*{p.\,3}.  This means \(\Ll_\Pp = \Ss_\Pp \Mm_\Pp\) and \(\Ss_\Pp \cap \Mm_\Pp\) is finite.  For the groups of real points \(L_\Pp = \Ll_\Pp(\R)\), \(A_\Pp = \Ss_\Pp(\R)^0\) and \(M_\Pp = \Mm_\Pp(\R)\) the situation is even better behaved.  One can verify that \(L_\Pp = A_\Pp M_\Pp\) but now the finite group \(A_\Pp \cap M_\Pp\) is actually trivial because \(A_\Pp\) is torsion-free.  Since both \(A_\Pp\) and \(M_\Pp\) are normal, the product is direct.  We would like to lift these decompositions to some Levi \(k\)-subgroup of \(\Pp\).  The following result due to A.\,Borel and J.-P.\,Serre asserts that the maximal compact subgroup \(K \subset G\) singles out a canonical choice for doing so \cite{Borel-Serre:Corners}*{Proposition~1.8, p.\,444}.  The caveat is that \(k = \Q\) needs to be relaxed to \(k = \R\).  We view \(x_0 = K\) as a base point in the symmetric space \(X\).

\begin{proposition}
\label{prop:uniquelevisubgroup}
Let \(\Pp \subset \Gg\) be a rational parabolic subgroup and let \(K \subset G\) be maximal compact.  Then \(\Pp\) contains one and only one \(\R\)-Levi subgroup \(\Ll_{\Pp, x_0}\) which is stable under \(\theta_K\).
\end{proposition}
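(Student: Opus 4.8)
The plan is to exhibit the required subgroup explicitly as $\Ll_{\Pp,x_0} = \Pp \cap \theta_K(\Pp)$, and then to prove separately that this intersection is an $\R$-Levi subgroup of $\Pp$ (existence) and that no other $\R$-Levi subgroup of $\Pp$ is $\theta_K$-stable (uniqueness). I may assume $\Pp$ is a proper parabolic, the case $\Pp=\Gg$ being trivial with $\Nn_\Pp=1$ and $\Ll_{\Pp,x_0}=\Gg$. The one analytic input I would use is a nondegenerate symmetric bilinear form $B$ on $\fr g = \mathrm{Lie}(\Gg)$ that is invariant under $\Ad(\Gg(\R))$ and under $d\theta_K$ and for which $B_\theta(X,Y):=-B(X,d\theta_K Y)$ is positive definite; such a $B$ is obtained from the Killing form on the semisimple ideal $[\fr g,\fr g]$ together with a $\theta_K$-adapted definite form on the central ideal. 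Setting $\fr p=\mathrm{Lie}(\Pp)$ and $\fr n=\mathrm{Lie}(\Nn_\Pp)$, I would record at the outset that $\fr n\subset[\fr g,\fr g]$, that $\fr n$ is totally isotropic for $B$ (being the nilradical of a parabolic subalgebra), and that $\fr p^{\perp_B}=\fr n$.

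For existence, the first step is to show that $\theta_K(\Pp^0)$ is a parabolic subgroup of $\Gg^0$ opposite to $\Pp^0$, i.e.\ that $\fr p+d\theta_K(\fr p)=\fr g$. Using that $B_\theta$ is $d\theta_K$-invariant one finds $\fr p^{\perp_{B_\theta}}=d\theta_K(\fr n)$ (as $\fr p^{\perp_B}=\fr n$) and $(d\theta_K\fr p)^{\perp_{B_\theta}}=\fr n$, so the $B_\theta$-orthogonal complement of $\fr p+d\theta_K(\fr p)$ is $\fr n\cap d\theta_K(\fr n)$; and for $X$ in this intersection both $X$ and $d\theta_K X$ lie in $\fr n$, hence $B_\theta(X,X)=-B(X,d\theta_K X)=0$ by isotropy, forcing $X=0$ since $B_\theta$ is definite. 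This establishes $\fr p+d\theta_K(\fr p)=\fr g$, which is precisely the condition that $\Pp^0$ and $\theta_K(\Pp^0)$ be opposite. Hence $\Ll_{\Pp,x_0}:=\Pp\cap\theta_K(\Pp)$ has identity component $\Pp^0\cap\theta_K(\Pp^0)$ (using $\Pp^0=\Pp\cap\Gg^0$), which is the common Levi subgroup of those opposite parabolics: connected, reductive, defined over $\R$, with $\Pp^0=\Ll_{\Pp,x_0}^0\cdot\Nn_\Pp$ and $\Ll_{\Pp,x_0}^0\cap\Nn_\Pp=1$. The group $\Ll_{\Pp,x_0}$ is then reductive and manifestly $\theta_K$-stable. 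The one point left to settle is that $\Ll_{\Pp,x_0}$ meets every connected component of $\Pp$, so that it is a genuine $\R$-Levi subgroup of the possibly disconnected $\Pp$; I would deduce this from condition~(II), in the manner in which disconnected reductive groups are handled throughout this section.

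For uniqueness, I would take an arbitrary $\theta_K$-stable $\R$-Levi subgroup $\Ll'$ of $\Pp$ and show it must equal $\Pp\cap\theta_K(\Pp)$. Let $\Ss'$ be the maximal $\R$-split torus in the center of $\Ll'$. The center of $\Ll'$ is $\theta_K$-stable and $\Ss'$ is its canonical maximal split subtorus, so $\Ss'$ is $\theta_K$-stable; and $\theta_K$ must act on $\Ss'$ by inversion, since otherwise the identity component of the $\theta_K$-fixed subgroup of $\Ss'$ would be a nontrivial $\R$-split torus with real points in the compact group $K$. Because $\theta_K$ inverts $\Ss'$, it carries each $\Ss'$-weight space in $\fr g$ to the one with opposite weight; as $\mathrm{Lie}(\Ll')$ is the zero-weight space and $\fr n=\mathrm{Lie}(\Nn_\Pp)$ is the sum of the weight spaces over a set of weights disjoint from its negative, it follows that $\theta_K(\Nn_\Pp)$ is the unipotent radical $\Nn_\Pp^-$ of the parabolic subgroup $\Pp^-$ opposite to $\Pp$ with Levi subgroup $\Ll'$. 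Therefore $\theta_K(\Pp)=\Ll'\cdot\Nn_\Pp^-=\Pp^-$, so $\Pp\cap\theta_K(\Pp)=\Pp\cap\Pp^-=\Ll'$. In particular every $\theta_K$-stable $\R$-Levi subgroup of $\Pp$ coincides with $\Pp\cap\theta_K(\Pp)=\Ll_{\Pp,x_0}$, which is the asserted uniqueness.

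The step I expect to be the real obstacle is the component bookkeeping flagged at the end of the existence argument — checking that $\Pp\cap\theta_K(\Pp)$ meets every connected component of $\Pp$, and more generally keeping ``$\R$-Levi subgroup'' and ``maximal split central torus'' under control for the disconnected groups $\Pp$ and $\Ll_{\Pp,x_0}$. This is exactly where conditions (I) and (II) on $\Gg$ are needed; for connected $\Gg$ this step is vacuous and the argument above is complete.
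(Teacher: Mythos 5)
The paper offers no proof of this proposition to compare against: it is quoted directly from Borel--Serre \cite{Borel-Serre:Corners}*{Proposition~1.8}, so your argument has to stand on its own. Your uniqueness half essentially does: showing that \(\theta_K\) inverts the maximal \(\R\)-split central torus \(\Ss'\) of a \(\theta_K\)-stable Levi (because a \(\theta_K\)-fixed split subtorus would have noncompact real points inside \(K\)), and deducing \(\theta_K(\Pp) = \Pp^-\) and hence \(\Ll' = \Pp \cap \theta_K(\Pp)\), is the standard argument and is fine modulo the disconnectedness bookkeeping you flag yourself.

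The existence half, however, has a genuine gap: you deduce that \(\Pp^0\) and \(\theta_K(\Pp^0)\) are opposite from \(\fr{p} + d\theta_K(\fr{p}) = \fr{g}\), asserting that this is ``precisely the condition'' for oppositeness. That condition is necessary but not sufficient. Two parabolic subgroups can satisfy \(\fr{p} + \fr{q} = \fr{g}\) (equivalently, \(PQ\) open in \(G\)) without \(P \cap Q\) being a common Levi: in \(\textup{SL}(3)\), take \(\Pp\) the standard parabolic whose Levi has roots \(\pm\alpha\) (containing the upper triangular Borel) and \(\Qq\) the parabolic whose Levi has roots \(\pm\beta\) and which contains the lower triangular Borel; then \(\fr{p} + \fr{q} = \fr{sl}_3\) while \(\fr{p} \cap \fr{q} = \fr{t} \oplus \fr{g}_\beta \oplus \fr{g}_{-\alpha}\) is solvable, so \(\Pp, \Qq\) are not opposite --- and one can even arrange \(\Qq = \sigma(\Pp)\) for an involution \(\sigma\), so the mere shape \(\Qq = \theta_K(\Pp)\) does not rescue the implication. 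The correct criterion is \(\mathbf{R}_u(\Pp) \cap \theta_K(\Pp) = \mathbf{R}_u(\theta_K(\Pp)) \cap \Pp = 1\), i.e.\ on Lie algebras \(\fr{n} \cap d\theta_K(\fr{p}) = 0\), which is strictly stronger than the statement \(\fr{n} \cap d\theta_K(\fr{n}) = 0\) that your orthogonal-complement computation actually yields. The good news is that your own setup proves the right statement in one line: if \(X \in \fr{n} \cap d\theta_K(\fr{p})\), then \(d\theta_K X \in \fr{p}\) and \(X \in \fr{n} = \fr{p}^{\perp_B}\), so \(B_\theta(X,X) = -B(X, d\theta_K X) = 0\) and \(X = 0\) by definiteness of \(B_\theta\); applying \(\theta_K\) gives the other half, oppositeness follows, and \(\fr{p} + d\theta_K(\fr{p}) = \fr{g}\) becomes a consequence rather than the input. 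With that repair the existence argument goes through, leaving only the component-counting point you flag at the end (that \(\Pp \cap \theta_K(\Pp)\) meets every component of \(\Pp\)); this is not vacuous in this paper, since its rational parabolic subgroups of a disconnected \(\Gg\) are defined as normalizers and are themselves disconnected, and it is exactly the kind of detail the paper sidesteps by citing Borel--Serre.
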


We remark that for a given \(\Pp\), the maximal compact subgroup \(K\) which is identified with the base point \(x_0 = K\) in \(X\) can always be chosen such that \(\Ll_{\Pp, x_0}\) is a \(\Q\)-group.  In fact, \(\Ll_{\mathbf{Q}, x_0}\) is then a \(\Q\)-group for all parabolic subgroups \(\mathbf{Q} \subset \Gg\) that contain \(\mathbf{P}\).  This follows from the proof of \cite{Borel-Ji:Compactifications}*{Proposition~III.1.11, p.\,273}.  In this case we will say that \(x_0\) is a \emph{rational base point} for \(\Pp\).  In general however, there is no universal base point \(x_0\) such that the \(\theta_K\)-stable Levi subgroups of all rational parabolic subgroups would be defined over \(\Q\) \cite{Goresky-Harder-MacPherson:Weighted}*{Section 3.9, p.\,151}.

The canonical projection \(\pi \co \Ll_{\Pp, x_0} \rightarrow \Ll_\Pp\) is an \(\R\)-isomorphism.  The groups \(\Ss_\Pp\) and \(\Mm_\Pp\) lift under \(\pi\) to the \(\R\)-subgroups \(\Ss_{\Pp, x_0}\) and \(\Mm_{\Pp, x_0}\) of \(\Pp\).  The rational parabolic subgroup \(\Pp\) thus has the decomposition
\begin{equation} \label{eqn:rationalparabolicdecomposition} \Pp = \Nn_\Pp \Ss_{\Pp, x_o} \Mm_{\Pp, x_0} \cong \Nn_\Pp \rtimes (\Ss_{\Pp, x_0} \Mm_{\Pp, x_0}) \end{equation}
where \(\Ll_{\Pp, x_0} = \Ss_{\Pp, x_0} \Mm_{\Pp, x_0}\) is an almost direct product.  Similarly the Lie groups \(L_\Pp\), \(A_\Pp\) and \(M_\Pp\) lift to the Lie subgroups \(L_{\Pp, x_0}\), \(A_{\Pp, x_0}\) and \(M_{\Pp, x_0}\) of the \emph{cuspidal group} \(P = \Pp(\R)\).

\begin{definition}
\label{def:rationallanglandsdecomposition} The point \(x_0 \in X\) yields the \emph{rational Langlands decomposition}
\[ P = N_P A_{\Pp, x_0} M_{\Pp, x_0} \cong N_P \rtimes (A_{\Pp, x_0} \times M_{\Pp, x_0}). \]
\end{definition}

We intentionally used a non-bold face index for \(N_P = \Nn_\Pp(\R)\) because \(N_P\) coincides with the unipotent radical of the linear Lie group \(P\).  The number \(\textup{s-rank}(\Pp) = \dim_\R A_{\Pp, x_0}\)  is called the \emph{split rank} of \(\Pp\).  Let \(K_P = P \cap K\) and \(K_P' = \pi(K_P)\).  Inspecting \cite{Borel-Serre:Corners}*{Proposition~1.8, p.\,444} we see that \(K_P \subset L_{\Pp, x_0}\) so \(K_P' \subset L_\Pp\).  Since \(K_P'\) is compact, we have \(\chi(K_P') \subset \{\pm1\}\) for each \(\chi \in X_\Q(\Ll_\Pp)\) so that actually \(K_P' \subset M_\Pp\) and thus \(K_P \subset M_{\Pp, x_0}\).  Moreover \(G = PK\) so that \(P\) acts transitively on the symmetric space \(X = G/K\).

\begin{definition}
\label{def:rationalhorosphericaldecomposition}
The map \((n, a, m K_P) \mapsto namK\) is a real analytic diffeomorphism
\[ N_P \times A_{\Pp, x_0} \times X_{\Pp, x_0} \cong X \]
of manifolds called the \emph{rational horospherical decomposition} of \(X\) with respect to \(\Pp\) and \(x_0\) and with \emph{boundary symmetric space} \(X_{\Pp, x_0} = M_{\Pp, x_0} / K_P\).
\end{definition}

Note that \(K_P \subset M_{\Pp, x_0}\) is maximal compact as it is even so in \(P\) \cite{Borel-Serre:Corners}*{Proposition~1.5, p.\,442}.  Write an element \(p \in P\) according to the rational Langlands decomposition as \(p = nam\) and write a point \(x_1 \in X\) according to the rational horospherical decomposition as \(x_1 = (n_1, a_1, m_1 K_P)\).  Then we see that the left-action of \(P\) on \(X\) is given by
\[ nam.(n_1, a_1, m_1 K_P) = (n\ {}^{am} n_1 , a a_1, m m_1 K_P), \]
where we adopt the convention to write \({}^h g\) for the conjugation \(h g h^{-1}\). 

The horospherical decomposition realizes the symmetric space \(X\) as the product of a nilmanifold, a flat manifold and yet another symmetric space \(X_{\Pp, x_0}\).  The isomorphism \(\pi\) identifies the latter one with the symmetric space \(X_\Pp = M_\Pp / K_P'\).  It is the symmetric space of the reductive \(\Q\)-group \(\Mm_\Pp\) which meets conditions (\ref{cond:anisotropiccenter}) and (\ref{cond:centralizermeetscomponents}).  The group \(\Mm_\Pp\) inherits the arithmetic lattice \(\Gamma_{M_\Pp}'\) \label{page:gammamp} which is the image of \(\Gamma_P = \Gamma \cap \mathcal{N}_G(P)\) under the projection \(P \rightarrow P/N_P \cong L_P\).  Here we have \(\Gamma_{M_\Pp}' \subset M_\Pp\) because \(\chi(\Gamma_{M_\Pp}') \subset \{ \pm 1 \}\) for all \(\chi \in X_\Q(\Ll_\Pp)\) as \(\chi(\Gamma_{M_\Pp}') \subset \mathbf{GL}(1,\Q)\) is arithmetic.  In general \(\Gamma_{M_\Pp}'\) might have torsion elements.  But there is a condition on \(\Gamma\) that ensures it does not.

\begin{definition} \label{def:neat}
A matrix \(g \in \textup{GL}(n,\Q)\) is called \emph{neat} if the subgroup of \(\C^*\) generated by the eigenvalues of \(g\) is torsion-free.  A subgroup of \(\textup{GL}(n, \Q)\) is called neat if all of its elements are neat.
\end{definition}

The notion of neatness is due to J.-P.\,Serre. It appears first in \cite{Borel:IntroductionAuxGroupes}*{Section 17.1, p.\,117}.  A neat subgroup is obviously torsion-free.  Every arithmetic subgroup of a linear algebraic \(\Q\)-group has a neat subgroup of finite index \cite{Borel:IntroductionAuxGroupes}*{Proposition~17.4, p.\,118} and neatness is preserved under morphisms of linear algebraic groups \cite{Borel:IntroductionAuxGroupes}*{Corollaire 17.3, p.\,118}.  Therefore \(\Gamma_{M_\Pp}'\) is neat if \(\Gamma\) is, and in that case \(\Gamma_{M_\Pp}'\) acts freely and properly on the boundary symmetric space \(X_\Pp\).  We observe that \(\rank_\Q(\Mm_\Pp) = \rank_\Q(\Gg) - \dim A_\Pp\).  In this sense the locally symmetric space \(\Gamma_{M_\Pp}' \backslash X_\Pp\) is closer to being compact than the original \(\Gamma \backslash X\).  This is a key observation for the construction of the Borel--Serre compactification.  If in particular \(\Pp\) is a \emph{minimal} rational parabolic subgroup, then \(\Ss_{\Pp, x_0} \subset \Pp\) is \(G\)-conjugate to a maximal \(\Q\)-split torus of \(\Gg\) so that \(\rank_\Q(\Mm_\Pp) = 0\) and thus \(\Gamma_{M_\Pp}' \backslash X_\Pp\) is compact.

Now the group \(\Mm_\Pp\) has itself rational parabolic subgroups \(\Qq'\) whose cuspidal subgroups \(Q'\) have a Langlands decomposition \(Q' = N_{Q'} A_{\Qq'\!, x_0'} M_{\Qq'\!, x_0'}\) with respect to the base point \(x_0' = K_P'\).  The isomorphism \(\pi\) identifies those groups as subgroups of \(M_{\Pp, x_0}\).  We set \(N_Q^* = N_P N_{Q'} \cong N_P \rtimes N_{Q'}\), \(A_{\Qq, x_0}^* = A_{\Pp, x_0} A_{\Qq'\!, x_0'} = A_{\Pp, x_o} \rtimes A_{\Qq'\!, x_0'}\) and \(M_{\Qq, x_0}^* = M_{\Qq'\!, x_0'}\).  Then we define \(Q^* = N_Q^* A_{\Qq, x_0}^* M_{\Qq, x_0}^*\).  The group \(Q^*\) is the cuspidal group of a rational parabolic subgroup \(\Qq^*\) of \(\Gg\) such that \(\Qq^* \subset \Pp\).  Equivalently, \(\Qq^*\) is a rational parabolic subgroup of \(\Pp\).  The Langlands decomposition of \(Q^*\) with respect to \(x_0\) is the decomposition given in its construction.

\begin{lemma}
\label{lemma:paraboliccorrespondence}
The map \(\Qq' \mapsto \Qq^*\) gives a bijection of the set of rational parabolic subgroups of \(\Mm_\Pp\) to the set of rational parabolic subgroups of \(\Gg\) contained in \(\Pp\).
\end{lemma}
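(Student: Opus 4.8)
The plan is to recognize the assignment \(\Qq' \mapsto \Qq^*\) as the composition of two classical correspondences for parabolic subgroups, after absorbing the \(\R\)-lifts used in the construction into an intrinsic, base-point-free and \(\Q\)-rational description of \(\Qq^*\). First I would rewrite the construction. Let \(q\co \Pp \to \Ll_\Pp = \Pp/\Nn_\Pp\) be the canonical projection, a morphism of \(\Q\)-groups whose restriction to the \(\theta_K\)-stable Levi subgroup \(\Ll_{\Pp, x_0}\) is the \(\R\)-isomorphism \(\pi\) of \fullref{prop:uniquelevisubgroup}. Regrouping the factors of \(Q^* = N_P N_{Q'} A_{\Pp, x_0} A_{\Qq'\!, x_0'} M_{\Qq'\!, x_0'}\) into the cuspidal group \(Q'\), and using that \(\Nn_\Pp\) is normal in \(\Pp\) while \(\Ss_{\Pp, x_0}\) is central in \(\Ll_{\Pp, x_0}\), a direct inspection shows that the underlying algebraic group is \(\Qq^* = \Nn_\Pp\, \Ss_{\Pp, x_0}\, \pi^{-1}(\Qq')\), with \(\pi^{-1}(\Qq') \subseteq \Mm_{\Pp, x_0}\) the \(\R\)-parabolic corresponding to \(\Qq'\). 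Applying \(q\) and using \(q|_{\Ll_{\Pp, x_0}} = \pi\) gives \(q(\Qq^*) = \Ss_\Pp\Qq'\) inside \(\Ll_\Pp\); since \(\Nn_\Pp = \ker q\) is contained in \(\Qq^*\), this forces
\[ \Qq^* \;=\; q^{-1}\!\big( \Ss_\Pp\, \Qq' \big). \]
In particular \(\Qq^*\) is independent of \(x_0\) and, being the \(q\)-preimage of the \(\Q\)-parabolic subgroup \(\Ss_\Pp\Qq'\) of \(\Ll_\Pp\), it is a \(\Q\)-parabolic subgroup of \(\Gg\) contained in \(\Pp\), as was asserted before the lemma.

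Granting this reformulation, suppose first that \(\Gg\) is connected, and recall two standard facts about parabolic subgroups over a field, applied over \(\Q\) (cf.\ \cite{Borel:LinearAlgebraicGroups}; \cite{Borel-Ji:Compactifications}*{Chapter III}). The first: for parabolic \(\Q\)-subgroups \(\Qq \subseteq \Pp\) of a connected \(\Q\)-group one has \(\mathbf{R}_u(\Pp) \subseteq \mathbf{R}_u(\Qq) \subseteq \Qq\), so that \(\Qq \mapsto q(\Qq)\) and, conversely, the formation of \(q\)-preimages are mutually inverse, inclusion-preserving bijections between the \(\Q\)-parabolic subgroups of \(\Gg\) contained in \(\Pp\) and the \(\Q\)-parabolic subgroups of \(\Ll_\Pp\) (the relevant homogeneous quotient varieties are projective by a standard fibration argument over \(\Gg/\Pp\)). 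The second: since \(\Ll_\Pp = \Ss_\Pp\Mm_\Pp\) is an almost direct product with \(\Ss_\Pp\) a central torus, \(\Ss_\Pp\) lies in every parabolic subgroup of \(\Ll_\Pp\), and \(\Qq' \mapsto \Ss_\Pp\Qq'\) and, conversely, intersection with \(\Mm_\Pp\) are mutually inverse bijections between the \(\Q\)-parabolic subgroups of \(\Mm_\Pp\) and those of \(\Ll_\Pp\) (using \(\Ll_\Pp/\Ss_\Pp\Qq' \cong \Mm_\Pp/\Qq'\)). Composing, the maps \(\Qq' \mapsto q^{-1}(\Ss_\Pp\Qq')\) and \(\Qq^* \mapsto q(\Qq^*)\cap\Mm_\Pp\) are mutually inverse bijections; by the first paragraph the former is precisely the map \(\Qq' \mapsto \Qq^*\) of the construction, which settles the connected case.

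For disconnected \(\Gg\) I would reduce to the connected case. By definition a rational parabolic subgroup is the normalizer of a rational parabolic subgroup of the identity component, and condition (\ref{cond:centralizermeetscomponents}) --- inherited by \(\Ll_\Pp\), by \(\Mm_\Pp\), and by every rational parabolic subgroup --- forces every such normalizer to meet every connected component. A short argument using condition (\ref{cond:centralizermeetscomponents}) together with the \(\Gg^0(\Q)\)-conjugacy of minimal rational parabolic subgroups then shows that \(\Qq^* \mapsto \Qq^* \cap \Gg^0\) is an inclusion-preserving bijection from the rational parabolic subgroups of \(\Gg\) contained in \(\Pp\) onto the rational parabolic subgroups of \(\Gg^0\) contained in \(\Pp^0 = \Pp \cap \Gg^0\), with inverse \(\mathbf{R}^0 \mapsto \mathcal{N}_\Gg(\mathbf{R}^0)\); the analogous statement holds with \(\Gg\) replaced by \(\Mm_\Pp\), and the map \(\Qq' \mapsto \Qq^*\) is compatible with passing to identity components. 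Hence the general case follows from the connected one.

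The one step that genuinely uses structure theory rather than formal bookkeeping is the nested-parabolics inclusion \(\mathbf{R}_u(\Pp) \subseteq \mathbf{R}_u(\Qq^*)\) for an \emph{arbitrary} rational parabolic \(\Qq^* \subseteq \Pp\): this is exactly what guarantees that every such \(\Qq^*\) arises as a \(q\)-preimage, and so it is the crux of surjectivity. Everything else is a matter of care --- matching the explicit \(\R\)-factored expression \(\Nn_\Pp\Ss_{\Pp, x_0}\pi^{-1}(\Qq')\) with the intrinsic \(\Q\)-group \(q^{-1}(\Ss_\Pp\Qq')\), and carrying the disconnected-group conventions through the argument. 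Along the way one also checks that \(N_Q^* = N_P N_{Q'}\), \(A_{\Qq, x_0}^* = A_{\Pp, x_0} A_{\Qq'\!, x_0'}\) and \(M_{\Qq, x_0}^* = M_{\Qq'\!, x_0'}\) really constitute the Langlands decomposition of \(Q^*\) with respect to \(x_0\) --- this is the form in which the recursive character of the correspondence gets used in the subsequent computation of \(L^2\)-invariants.
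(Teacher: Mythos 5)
The paper offers no proof of this lemma at all: it is quoted from Harish-Chandra, \cite{Harish-Chandra:Automorphic}*{Lemma~2, p.\,4}, so there is no in-paper argument to compare against. Your reconstruction is exactly the standard argument behind that citation, and its skeleton is sound: the base-point-free identification \(\Qq^* = q^{-1}(\Ss_\Pp\Qq')\) for \(q\co\Pp\to\Ll_\Pp\); the mutually inverse image/preimage bijections under \(q\) between rational parabolic subgroups of \(\Gg\) contained in \(\Pp\) and those of \(\Ll_\Pp\), resting on \(\mathbf{R}_u(\Pp)\subseteq\mathbf{R}_u(\Qq)\) for nested parabolics (this is indeed the crux of surjectivity); and the passage from \(\Ll_\Pp\) to \(\Mm_\Pp\) through the almost direct product \(\Ll_\Pp=\Ss_\Pp\Mm_\Pp\).

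One point needs more care than you give it, and it sits in a different place than where you park it. The disconnectedness you defer to the ``disconnected \(\Gg\)'' paragraph already occurs for connected \(\Gg\): there \(\Pp\) and hence \(\Ll_\Pp\) are connected, but \(\Mm_\Pp=\bigcap_{\chi}\ker\chi^2\) is in general disconnected --- this is precisely why the paper insists on conditions (\ref{cond:anisotropiccenter}) and (\ref{cond:centralizermeetscomponents}). So in your ``connected case'' the term ``rational parabolic subgroup of \(\Mm_\Pp\)'' already means ``normalizer in \(\Mm_\Pp\) of a parabolic of \(\Mm_\Pp^0\)'', and your parenthetical justification via \(\Ll_\Pp/\Ss_\Pp\Qq'\cong\Mm_\Pp/\Qq'\) does not by itself yield the mutual-inverse property: for disconnected groups, having complete quotient is strictly weaker than being such a normalizer (the proper subgroup \(\Mm_\Pp^0\) has finite, hence complete, quotient but is never a rational parabolic of a disconnected \(\Mm_\Pp\)). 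The repair is short and is essentially the computation you only sketch later: connectedness of \(\Ll_\Pp\) gives \(\Ll_\Pp=\Ss_\Pp\Mm_\Pp^0\) and \(\Mm_\Pp=(\Ss_\Pp\cap\Mm_\Pp)\Mm_\Pp^0\), so for \(\Qq'=\mathcal{N}_{\Mm_\Pp}(\mathbf{R})\) with \(\mathbf{R}\subset\Mm_\Pp^0\) parabolic, centrality of \(\Ss_\Pp\) gives \(\Ss_\Pp\Qq'=\Ss_\Pp\mathbf{R}\), a genuine (connected) parabolic of \(\Ll_\Pp\), and \(\Ss_\Pp\mathbf{R}\cap\Mm_\Pp=\Qq'\). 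With that supplied, and with the matching of the \(\R\)-factored \(Q^*\) against \(q^{-1}(\Ss_\Pp\Qq')\) that you rightly flag as a density/Langlands verification, your surjectivity argument --- any rational parabolic \(\Qq\subseteq\Pp\) contains \(\Nn_\Pp\), \(q(\Qq)\) contains \(\Ss_\Pp\), hence \(\Qq=q^{-1}\bigl(\Ss_\Pp\,(q(\Qq)\cap\Mm_\Pp)\bigr)\) --- closes the proof.
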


This is \cite{Harish-Chandra:Automorphic}*{Lemma~2, p.\,4}.  We use the inverse of this correspondence to conclude that for every rational parabolic subgroup \(\Qq = \Qq^* \subset \Pp\) we obtain a \emph{rational horospherical decomposition of the boundary symmetric space}
\begin{equation} \label{eqn:horosphericalofboundary} X_{\Pp, x_0} \cong X_{\Pp} \cong N_{Q'} \times A_{\Qq'\!, x_0'} \times X_{\Qq'\!, x_0'}. \end{equation}
In the case \(\Pp = \Gg\) condition (\ref{cond:anisotropiccenter}) gives \(\Mm_{\Gg, x_0} = \Gg\) so that we get back the original rational horospherical decomposition of \fullref{def:rationalhorosphericaldecomposition}.

In the rest of this section we will recall the classification of rational parabolic subgroups of \(\Gg\) up to conjugation in \(\Gg(\Q)\) in terms of parabolic roots \cite{Harish-Chandra:Automorphic}*{Chapter 1, pp.\,3--4}.  Let \(\fr{g}^0\), \(\fr{p}\), \(\fr{n}_P\), \(\fr{a}_{\Pp, x_0}\) and \(\fr{m}_{\Pp, x_0}\) be the Lie algebras of the Lie groups \(G\), \(P\), \(N_P\), \(A_{\Pp, x_0}\) and \(M_{\Pp, x_0}\).  From the viewpoint of algebraic groups, these Lie algebras are given by \(\R\)-linear left-invariant derivations of the field of rational functions defined over \(\R\) on the unit components of \(\Gg\), \(\Pp\), \(\Nn_\Pp\), \(\Ss_{\Pp, x_0}\) and \(\Mm_{\Pp, x_0}\), respectively.  A linear functional \(\alpha\) on \(\fr{a}_{\Pp, x_0}\) is called a \emph{parabolic root} if the subspace
\[ \fr{n}_{P, \alpha} = \{ n \in \fr{n}_P \co \ad(a)(n) = \alpha(a)n \text{ for all } a \in \fr{a}_{\Pp, x_0} \} \]
of \(\fr{n}_P\) is nonzero.  We denote the set of all parabolic roots by \(\Phi(\fr{p}, \fr{a}_{\Pp, x_0})\).  If \(l = \dim \fr{a}_{\Pp, x_0}\), there is a unique subset \(\Delta(\fr{p}, \fr{a}_{\Pp, x_0}) \subset \Phi(\fr{p}, \fr{a}_{\Pp, x_0})\) of \(l\) \emph{simple parabolic roots} such that every parabolic root is a unique linear combination of simple ones with nonnegative integer coefficients.  The group \(A_{\Pp, x_0}\) is exponential so that \(\exp \co \fr{a}_{\Pp, x_0} \rightarrow A_{\Pp, x_0}\) is a diffeomorphism with inverse ``\(\log\)''.  Therefore we can evaluate a parabolic root \(\alpha \in \Phi(\fr{p}, \fr{a}_{\Pp, x_0})\) on elements \(a \in A_{\Pp, x_0}\) setting \(a^\alpha = \exp (\alpha(\log a))\) where now ``\(\exp\)'' is the ordinary real exponential function.

\label{page:classifyparabolic} The subsets of \(\Delta(\fr{p}, \fr{a}_{\Pp, x_0})\) classify the rational parabolic subgroups of \(\Gg\) that contain \(\Pp\) as we will now explain.  Let \(I \subset \Delta(\fr{p},\fr{a}_{\Pp, x_0})\) be a subset and let \(\Phi_I \subset \Phi(\fr{p}, \fr{a}_{\Pp, x_0})\) be the set of all parabolic roots that are linear combinations of simple roots in \(I\).  Set \(\fr{a}_I = \bigcap_{\alpha \in I} \ker \alpha\) and \(\fr{n}_I = \bigoplus_{\alpha \in \Sigma} \fr{n}_{P, \alpha}\) where \(\Sigma = \Sigma(\fr{p}, \fr{a}_{\Pp, x_0})\) denotes the set of all parabolic roots which do not lie in \(\Phi_I\).  Consider the sum \(\fr{p}_I = \fr{n}_I \oplus \fr{z}(\fr{a}_I)\) of \(\fr{n}_I\) and the centralizer of \(\fr{a}_I\) in \(\fr{g}^0\).  Let \(P_I = \mathcal{N}_G(\fr{p}_I)\) be the normalizer of \(\fr{p}_I\) in \(G\).  If \(x_1 \in X\) is a different base point, then \(x_1 = p.x_0\) for some \(p \in P\) and \(\fr{a}_{\Pp, x_1} = {}^p \fr{a}_{\Pp, x_0}\) as well as \(\fr{n}_{(I^p)} = {}^p\fr{n}_I\).  It follows that the group \(P_I\), thus its Zariski closure \(\Pp_I\), is independent of the choice of base point.  Since rational base points exist for \(\Pp\), the Lie algebra of \(\Pp_I\), which as a variety is given by \(\C\)-linear left-invariant derivations of the field of rational functions on \(\Pp_I^0\), is defined over \(\Q\).  It follows that \(\Pp_I\) is a \(\Q\)-group \cite{Harish-Chandra:Automorphic}*{p.\,1}.  In fact, \(\Pp_I\) is a rational parabolic subgroup of \(\Gg\) with cuspidal group \(P_I\).  Let \(N_I\) and \(A_I\) be the Lie subgroups of \(P_I\) with Lie algebras \(\fr{n}_I\) and \(\fr{a}_I\).  Then \(N_I \subset P_I\) is the unipotent radical and \(A_I = \Ss_{\Pp_{\!I}\!, x_0}(\R)^0\).  The parabolic roots \(\Phi(\fr{p}_I, \fr{a}_I)\) are the restrictions of \(\Sigma(\fr{p}, \fr{a}_{\Pp, x_0})\) to \(\fr{a}_I\) where simple parabolic roots restrict to the simple ones \(\Delta(\fr{p}_I, \fr{a}_I)\) of \(\fr{p}_I\).

Every rational parabolic subgroup of \(\Gg\) that contains \(\Pp\) is of the form \(\Pp_I\) for a unique \(I \subset \Delta(\fr{p}, \fr{a}_{\Pp, x_0})\).  The two extreme cases are \(\Pp_\emptyset = \Pp\) and \(\Pp_{\Delta(\fr{p}, \fr{a}_{\Pp, x_0})} = \Gg\).  If \(\Pp\) is minimal, the groups \(\Pp_I\) form a choice of so called \emph{standard rational parabolic subgroups}.  Every rational parabolic subgroup of \(\Gg\) is \(\Gg(\Q)\)-conjugate to a unique standard one.  Whence there are only finitely many rational parabolic subgroups up to conjugation in \(\Gg(\Q)\).  There are even only finitely many when we restrict ourselves to conjugating by elements of an arithmetic subgroup \(\Gamma \subset \Gg(\Q)\).  This is clear from the following result of A.\,Borel \cite{Harish-Chandra:Automorphic}*{p.\,5}.

\begin{proposition}
\label{prop:finitelymanygammaparabolic}
Let \(\Pp \subset \Gg\) be a rational parabolic subgroup and let \(\Gamma \subset \Gg(\Q)\) be an arithmetic subgroup. Then the set \(\Gamma \backslash \,\Gg(\Q)\, / \Pp(\Q)\) is finite.
\end{proposition}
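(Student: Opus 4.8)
The plan is to reduce the statement to a standard finiteness theorem of reduction theory and then quote the latter. First, two simplifications. Since \(\Gg/\Gg^{0}\) is a finite group scheme, the quotients \(\Gg(\Q)/\Gg^{0}(\Q)\) and \(\Pp(\Q)/\Pp^{0}(\Q)\) are finite; as \(\Pp^{0}=\Pp\cap\Gg^{0}\) and \(\Gamma\cap\Gg^{0}\) has finite index in \(\Gamma\), a routine computation with coset representatives reduces the assertion to the case of the connected reductive \(\Q\)-group \(\Gg^{0}\), which satisfies \(X_{\Q}(\Gg^{0})=1\), and its arithmetic subgroup \(\Gamma\cap\Gg^{0}\). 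So assume \(\Gg\) is connected. Next, by the classification of rational parabolic subgroups recalled above, \(\Pp\) is \(\Gg(\Q)\)-conjugate to a standard one; conjugation by an element \(g\in\Gg(\Q)\) induces a \(\Gamma\)-equivariant bijection \(\Gg(\Q)/\Pp(\Q)\cong\Gg(\Q)/(g\Pp g^{-1})(\Q)\), and a standard parabolic contains the fixed minimal rational parabolic subgroup \(\Pp_{0}\), for which \(\Gg(\Q)/\Pp_{0}(\Q)\rightarrow\Gg(\Q)/\Pp(\Q)\) is a \(\Gamma\)-equivariant surjection. Hence it suffices to treat \(\Pp=\Pp_{0}\). (One may moreover assume \(\Gamma=\Gg(\Z)\), finiteness of the orbit set being a commensurability invariant of \(\Gamma\).)

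It is convenient to reinterpret the problem geometrically. Every minimal rational parabolic subgroup of the connected group \(\Gg\) is \(\Gg(\Q)\)-conjugate to \(\Pp_{0}\), and \(\Pp_{0}\) is self-normalizing, so \(g\Pp_{0}\mapsto g\Pp_{0}g^{-1}\) identifies \(\Gg(\Q)/\Pp_{0}(\Q)\) with the set of \emph{all} minimal rational parabolic subgroups of \(\Gg\), the left \(\Gamma\)-action becoming conjugation. Thus what remains is to show that \(\Gamma\) has only finitely many conjugacy classes of minimal rational parabolic subgroups, equivalently only finitely many ``rational cusps''. This is one of the basic finiteness theorems of Borel--Harish-Chandra reduction theory: a Siegel set attached to \(\Pp_{0}\) covers \(X\) modulo \(\Gamma\) up to finitely many \(\Gg(\Q)\)-translates, and the finiteness of \(\Gamma\backslash\Gg(\Q)/\Pp_{0}(\Q)\) is then standard. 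I would quote the statement from \cite{Borel-Harish-Chandra:Arithmetic}, \cite{Borel:IntroductionAuxGroupes}, or \cite{Borel-Ji:Compactifications} rather than reproduce the argument.

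The main --- indeed the only --- obstacle is thus reduction theory itself, namely the existence of a Siegel-set fundamental set with finitely many ``corners''; everything else is formal. It seems worth recording the recursive structure behind the statement, in the spirit of \fullref{lemma:paraboliccorrespondence} and matching the ``recursive character'' emphasized in the introduction. For a maximal proper rational parabolic subgroup \(\Pp_{1}\supseteq\Pp_{0}\), the \(\Gamma\)-equivariant surjection \(\Gg(\Q)/\Pp_{0}(\Q)\rightarrow\Gg(\Q)/\Pp_{1}(\Q)\) has fibre \(\Pp_{1}(\Q)/\Pp_{0}(\Q)\), which by the Levi decomposition \(\Pp_{1}=\Nn_{\Pp_{1}}\rtimes\Ll_{\Pp_{1}}\) and the centrality of \(\Ss_{\Pp_{1}}\) in \(\Ll_{\Pp_{1}}\) is, up to a finite-to-one correspondence, the coset space \(\Mm_{\Pp_{1}}(\Q)/\Qq_{0}(\Q)\) for the minimal rational parabolic subgroup \(\Qq_{0}\subset\Mm_{\Pp_{1}}\) corresponding to \(\Pp_{0}\) under \fullref{lemma:paraboliccorrespondence}, where \(\rank_{\Q}(\Mm_{\Pp_{1}})=\rank_{\Q}(\Gg)-1\). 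Since the \(\Gamma\)-stabilizer of a point of \(\Gg(\Q)/\Pp_{1}(\Q)\) projects to an arithmetic subgroup of \(\Mm_{\Pp_{1}}\), an induction on \(\rank_{\Q}(\Gg)\) reduces the general statement to the case of \emph{maximal} \(\Pp\), so that reduction theory is needed only there.
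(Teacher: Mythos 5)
Your proposal is correct, but it ultimately rests on exactly the same input as the paper: the paper gives no argument of its own and simply quotes this statement verbatim as a theorem of A.\,Borel from \cite{Harish-Chandra:Automorphic}*{p.\,5}, where it is stated for the class of possibly disconnected reductive \(\Q\)-groups satisfying conditions (\ref{cond:anisotropiccenter}) and (\ref{cond:centralizermeetscomponents}) of \fullref{sec:arithmeticsubgroups} and for arbitrary rational parabolic subgroups. Your preliminary reductions --- to \(\Gg^{0}\), to a minimal standard \(\Pp\), to \(\Gamma=\Gg(\Z)\) --- the reinterpretation via \(\Gamma\)-conjugacy classes of minimal rational parabolic subgroups, and the closing induction on \(\rank_\Q(\Gg)\) are sound (the ``finite-to-one'' step is best justified by the \(\Gg(\Q)\)-conjugacy of rational parabolic subgroups of the same type), but they become unnecessary once one cites the reduction-theoretic result in that generality, which is what the paper does.
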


\subsection{Bordification}
\label{sec:bordification}

From now on we drop \(x_0\) and \(x_0'\) from our notation.  The resulting notational collisions \(A_\Pp = A_{\Pp, x_0}\), \(M_\Pp = M_{\Pp, x_o}\) and \(X_\Pp = X_{\Pp, x_0}\) regarding Levi quotients and Levi subgroups are justified by \fullref{prop:uniquelevisubgroup} and the discussion throughout the preceding section.  We will use the symbol ``\(\textstyle\bigcupdot\)'' for general disjoint unions in topological spaces, whereas the symbol ``\(\textstyle\coprod\)'' is reserved for the true categorical coproduct.

Let \(\Pp \subset \Gg\) be a rational parabolic subgroup.  It determines the rational horospherical decomposition \(X = N_P \times A_\Pp \times X_\Pp\) of \fullref{def:rationalhorosphericaldecomposition}.  Define the \emph{boundary component} of \(\Pp\) by \(e(\Pp) = N_P \times X_\Pp\).  Then as a set, the \emph{Borel--Serre bordification} \(\overline{X}\) of the symmetric space \(X\) is given by the countable disjoint union
\[ \overline{X} = \textstyle\coprod\limits_{\Pp \subset \Gg} e(\Pp) \]
of all boundary components of rational parabolic subgroups \(\Pp \subset \Gg\).  This includes the symmetric space \(X = e(\Gg)\).  In order to topologize the set \(\overline{X}\) we introduce different coordinates on \(e(\Pp)\) for every parabolic subgroup \(\Qq \subset \Pp\).  We do so by writing the second factor in \(e(\Pp) = N_P \times X_\Pp\) according to the rational horospherical decomposition of the boundary symmetric space \(X_\Pp = N_{Q'} \times A_{\Qq'} \times X_{\Qq'}\) given in \eqref{eqn:horosphericalofboundary}.  From the preparation of \fullref{lemma:paraboliccorrespondence} we get \(N_Q = N_P N_{Q'}\) and \(M_Q = M_{Q'}\) so that we are left with
\begin{equation} \label{eqn:coordinatesofep}  e(\Pp) = N_Q \times A_{\Qq'} \times X_{\Qq}. \end{equation}
The closed sets of \(\overline{X}\) are now determined by the following \emph{convergence class of sequences} \cite{Borel-Ji:Compactifications}*{I.8.9--I.8.13, p.\,113}.

A sequence \((x_i)\) of points in \(e(\Pp)\) converges to a point \(x \in e(\Qq)\) if \(\Qq \subset \Pp\) and if for the coordinates \(x_i = (n_i, a_i, y_i)\) of \eqref{eqn:coordinatesofep} and \(x = (n,y)\) of \(e(\Qq) = N_Q \times X_\Qq\) the following three conditions hold true.
\begin{enumerate}[(i)]
\item \label{item:rootstoinfinity} \(a_i^\alpha \rightarrow +\infty\) for each \(\alpha \in \Phi(\fr{q}', \fr{a}_{\Qq'})\),
\item \label{item:nilpotentconvergence} \(n_i \rightarrow n\) within \(N_Q\),
\item \label{item:boundaryconvergence} \(y_i \rightarrow y\) within \(X_\Qq\).
\end{enumerate}
 A general sequence \((x_i)\) of points in \(\overline{X}\) converges to a point \(x \in e(\Qq)\) if for each \(\Pp \subset \Gg\) every infinite subsequence of \((x_i)\) within \(e(\Pp)\) converges to \(x\).

Note that in the case  \(\Qq = \Pp\) the set \(\Phi(\fr{q}', \fr{a}_{\Qq'})\) is empty so that condition (\ref{item:rootstoinfinity}) is vacuous.  We therefore obtain the convergence of the natural topology of \(e(\Pp)\).  In particular, the case \(\Qq = \Pp = \Gg\) gives back the natural topology of \(X\).  It is clear that we obtain the same set \(\overline{X}\) with the same class of sequences if we go over from \(\Gg\) to \(\Gg^0\).  We thus may cite \cite{Borel-Ji:Compactifications}*{Section III.9.2, p.\,328} where it is stated that this class of sequences does indeed form a convergence class of sequences.  This defines the topology of \(\overline{X}\).

Since a sequence \((x_i)\) in \(e(\Pp)\) can only converge to a point \(x \in e(\Qq)\) if \(\Qq \subset \Pp\), it is immediate that the \emph{Borel--Serre boundary} \(\partial \overline{X} \subset \overline{X}\) of \(\overline{X}\) defined as
\begin{equation} \label{eqn:borelserreboundary}
\partial \overline{X} = \textstyle\bigcupdot\limits_{\Pp \subsetneq \Gg} e(\Pp)
\end{equation}
is closed in \(\overline{X}\).  Whence its complement \(e(\Gg) = X \subset \overline{X}\) is open.  The following proposition sharpens \cite{Borel-Ji:Compactifications}*{Lemma~III.16.2, p.\,371}.

\begin{proposition}
\label{prop:closureofep}
The closure of the boundary component \(e(\Pp)\) in the bordification \(\overline{X}\) can be canonically identified with the product
\[ \overline{e(\Pp)} = N_P \times \overline{X}_\Pp \]
where \(\overline{X}_\Pp\) is the Borel--Serre bordification of the boundary symmetric space \(X_\Pp\).
\end{proposition}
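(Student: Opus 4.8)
The plan is to make the asserted identification explicit as a bijection $\Psi$ and then to upgrade it to a homeomorphism by comparing the convergence classes of sequences on the two sides. As a first step I would determine the underlying set of $\overline{e(\Pp)}$. By the definition of the convergence class on $\overline X$, a sequence lying in $e(\Pp)$ can only converge to a point of $e(\Qq)$ for a rational parabolic subgroup $\Qq\subseteq\Pp$; conversely, given any $x=(n,y)\in e(\Qq)=N_Q\times X_\Qq$ with $\Qq\subseteq\Pp$, writing $e(\Pp)$ in the coordinates \eqref{eqn:coordinatesofep} adapted to $\Qq$ and taking $x_i=(n,a_i,y)$ with $a_i\in A_{\Qq'}$ chosen so that $a_i^{\alpha}\to+\infty$ for every $\alpha\in\Phi(\fr{q}',\fr{a}_{\Qq'})$ produces a sequence in $e(\Pp)$ converging to $x$ by conditions (i)--(iii). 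Since the topology of $\overline X$ comes from a convergence class of sequences and the resulting set is closed (a convergent sequence drawn from it has infinitely many terms in some single $e(\Qq)$ with $\Qq\subseteq\Pp$, which forces the limit into some $e(\mathbf R)$ with $\mathbf R\subseteq\Pp$), I conclude $\overline{e(\Pp)}=\bigcupdot_{\Qq\subseteq\Pp}e(\Qq)$ inside $\overline X$.

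Next I would assemble the bijection. By \fullref{lemma:paraboliccorrespondence} the rational parabolic subgroups $\Qq\subseteq\Pp$ correspond bijectively $\Qq\leftrightarrow\Qq'$ to the rational parabolic subgroups $\Qq'$ of $\Mm_\Pp$, and the construction preceding that lemma gives $N_Q=N_PN_{Q'}\cong N_P\rtimes N_{Q'}$ and $M_Q=M_{Q'}$, hence $X_\Qq=X_{\Qq'}$. Writing $\overline X_\Pp=\bigcupdot_{\Qq'\subseteq\Mm_\Pp}e(\Qq')$ with $e(\Qq')=N_{Q'}\times X_{\Qq'}$, these relations give stratum by stratum diffeomorphisms $N_P\times e(\Qq')\cong e(\Qq)$, $(n,(\mu,\zeta))\mapsto(n\mu,\zeta)$, which assemble to a bijection $\Psi\colon N_P\times\overline X_\Pp\to\overline{e(\Pp)}$. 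For $\Pp=\Gg$ condition (I) forces $\Mm_\Pp=\Gg$ and $\Psi$ simply recovers $\overline X$, which serves as a consistency check and reflects the recursive shape of the bordification.

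The substantial step is then to show that $\Psi$ is a homeomorphism. The source carries the product of the manifold topology on $N_P$ with the convergence class of $\overline X_\Pp$; the target carries the subspace topology from $\overline X$, which --- since $\overline{e(\Pp)}$ is closed --- is again given by the restriction of the convergence class of $\overline X$. Hence it suffices to check that $\Psi$ and $\Psi^{-1}$ preserve convergent sequences. By the definition of convergence of a general sequence in the bordification (test the subsequence inside each boundary stratum separately) together with the finiteness of the set of rational parabolic subgroups containing a fixed one, this reduces to a sequence $(x_i)$ contained in a single stratum $e(\Qq)\subseteq\overline{e(\Pp)}$: write $x_i=\Psi(n_i,z_i)$ and let the limit be $p=\Psi(n,z)\in e(\mathbf R)$ with $\mathbf R\subseteq\Qq$ and $z\in e(\mathbf R')$. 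I would then expand ``$x_i\to p$'' through the coordinates \eqref{eqn:coordinatesofep} of $e(\Qq)$ adapted to $\mathbf R$, expand ``$(n_i,z_i)\to(n,z)$'' through ``$n_i\to n$ in $N_P$'' plus the coordinates \eqref{eqn:coordinatesofep} of $e(\mathbf R')$ adapted to $\mathbf R'$ inside $\overline X_\Pp$, and match the three defining conditions term by term: the unipotent radical splits as $N_{\mathbf R}=N_P\rtimes N_{\mathbf R'}$, so (ii) is equivalent to $n_i\to n$ together with (ii) for $(z_i)$; the boundary symmetric spaces agree, $X_{\mathbf R}=X_{\mathbf R'}$ because $\Mm_{\mathbf R}=\Mm_{\mathbf R'}$, which matches (iii); and the split tori with their parabolic root systems agree, which matches (i), because $\Mm_\Qq=\Mm_{\Qq'}$ and the parabolic of $\Mm_\Qq$ attached to $\mathbf R$ by \fullref{lemma:paraboliccorrespondence} for $\Gg$ coincides with the parabolic of $\Mm_{\Qq'}$ attached to $\mathbf R'$ by \fullref{lemma:paraboliccorrespondence} for $\Mm_\Pp$. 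Since the coordinate systems \eqref{eqn:coordinatesofep} are precisely the charts realizing $\overline X$ and $\overline X_\Pp$ as manifolds with corners, the same matching shows that $\Psi$ respects that finer structure as well.

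The hard part will be the last compatibility used above: checking that the ``adapted coordinates'' on $e(\Qq)$ obtained directly from $\Gg$ literally agree --- split torus, unipotent radical and boundary symmetric space together --- with those obtained by first passing to $\Mm_\Pp$ and then to its boundary symmetric spaces. This is the transitivity of the parabolic correspondence of \fullref{lemma:paraboliccorrespondence}; I expect it to fall out of the explicit formulas $N_Q^{*}=N_PN_{Q'}$, $A_\Qq^{*}=A_\Pp A_{\Qq'}$, $M_\Qq^{*}=M_{\Qq'}$ used in its construction, together with their evident associativity, but it does call for careful tracking of the iterated decompositions.
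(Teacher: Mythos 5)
Your proposal is correct and follows essentially the same route as the paper: the set-level identity $\overline{e(\Pp)}=\bigcupdot_{\Qq\subseteq\Pp}e(\Qq)$, the stratumwise identification $e(\Qq)=N_P\times e(\Qq')$ via \fullref{lemma:paraboliccorrespondence}, and the comparison of convergence classes using the splitting $N_R\cong N_P\rtimes N_{R|P}$ together with the cancellation rule $\mathbf R|\Qq=(\mathbf R|\Pp)|(\Qq|\Pp)$, which the paper likewise deduces from $M_\Qq=M_{\Qq|\Pp}$. The only difference is that you spell out the set-theoretic closure argument and the subspace-topology remark in more detail than the paper does; the substance of the argument is the same.
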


\begin{proof}
By construction of the convergence class of sequences we have
\begin{equation} \label{eqn:closureofep} \overline{e(\Pp)} = \textstyle\bigcupdot\limits_{\Qq \subset \Pp} e(\Qq). \end{equation}
In terms of the rational parabolic subgroup \(\Qq' \subset \Mm_\Pp\) of \fullref{lemma:paraboliccorrespondence} the boundary component \(e(\Qq)\) can be expressed as
\begin{equation} \label{eqn:relativeboundarycomponent} e(\Qq) = N_Q \times X_\Qq = N_P \times N_{Q'} \times X_{\Qq'} = N_P \times e(\Qq'). \end{equation}
In the distributive category of sets we thus obtain 
\[ \overline{e(\Pp)} = \textstyle\coprod\limits_{\Qq \subset \Pp} e(\Qq) = \!\textstyle\coprod\limits_{\Qq' \subset \Mm_\Pp}\! N_P \times e(\Qq') = N_P \times \!\!\textstyle\coprod\limits_{\Qq' \subset \Mm_\Pp}\!\!\! e(\Qq') = N_P \times \overline{X}_\Pp. \]
We have to verify that this identifies the spaces \(\overline{e(\Pp)}\) and \(N_P \times \overline{X}_\Pp\) also topologically if we assign the bordification topology to \(\overline{X}_\Pp\).  For this purpose we show that the natural convergence classes of sequences on \(\overline{e(\Pp)}\) and \(N_P \times \overline{X}_\Pp\) coincide.  Let us refine our notation and write \(\Qq' = \Qq | \Pp\) to stress that \(\Qq' \subset \Mm_\Pp\).  Let \(\mathbf{R} \subset \Qq\) be a third rational parabolic subgroup.  Then the equality \(M_\Qq = M_{\Qq | \Pp}\) implies the cancellation rule \(\mathbf{R} | \Qq = (\mathbf{R} | \Pp) | (\Qq | \Pp)\).  Incorporating coordinates for \(e(\Qq)\) with respect to \(\mathbf{R}\) as in \eqref{eqn:coordinatesofep}, equation \eqref{eqn:relativeboundarycomponent} can now be written as
\[ e(\Qq) = N_R \times A_{\mathbf{R}|\Qq} \times X_\mathbf{R} = N_P \times (N_{R | P} \times A_{(\mathbf{R} | \Pp) | (\Qq | \Pp)} \times X_{\mathbf{R} | \Pp}). \]
Here the product \(N_{R | P} \times A_{(\mathbf{R} | \Pp) | (\Qq | \Pp)} \times X_{\mathbf{R} | \Pp}\) gives the coordinates \eqref{eqn:coordinatesofep} for \(e(\Qq | \Pp)\) with respect to \(\mathbf{R} | \Pp\).  Let \((n_i,a_i,y_i)\) be a sequence in \(e(\Qq)\) converging to \((n,y) \in e(\mathbf{R})\).  We decompose uniquely \(n_i = n_i^P n_i^{R | P}\) and \(n = n^P n^{R | P}\) according to \(N_R = N_P N_{R | P} \cong N_P \rtimes N_{R | P}\).  Then firstly \(n_i^P \rightarrow n^P\) in \(N_P\).  Secondly \((n_i^{R | P}, a_i, y_i)\) is a sequence in \(e(\Qq | \Pp)\) that converges to \((n^{R | P}, y) \in e(\mathbf{R} | \Pp)\) according to the convergence class of the bordification \(\overline{X}_\Pp\).  Since the convergence class of \(N_P \times \overline{X}_\Pp\) consists of the memberwise products of convergent sequences in \(N_P\) and the sequences in the convergence class of \(\overline{X}_\Pp\), this clearly proves the assertion.
\end{proof}

One special case of this proposition is \(\overline{e(\Gg)} = \overline{X}\).  The other important special case occurs when \(\Pp\) is a minimal rational parabolic subgroup.  Then \(\rank_\Q(\Mm_\Pp) = 0\) so that \(\overline{X}_\Pp = X_\Pp\) which means that \(e(\Pp)\) is closed.

As we have \(\overline{e(\Pp)} = \textstyle\bigcupdot e(\Qq)\), the union running over all \(\Qq \subset \Pp\), we should also examine the subset
\[ \underline{e(\Pp)} = \textstyle\bigcupdot\limits_{\Qq \supset \Pp} e(\Qq) \subset \overline{X}. \]
To this end consider the rational horospherical decomposition \(X = N_P \times A_\Pp \times X_\Pp\) of \(X\) given \(\Pp\).  Let \(\Delta(\fr{p}, \fr{a}_\Pp) = \{ \alpha_1, \ldots, \alpha_l \}\) be a numbering of the simple parabolic roots.  The map  \(a \mapsto (a^{-\alpha_1}, \ldots, a^{-\alpha_l})\) defines a coordinate chart \(\varphi_\Pp \co A_\Pp \rightarrow (\R_{>0})^l\).  The minus signs make sure the ``point at infinity'' of \(A_\Pp\) will correspond to the origin in \(\R^l\).  Let \(\overline{A}_\Pp\) be the closure of \(A_\Pp\) in \(\R^l\) under the embedding \(\varphi_\Pp\).  Given \(\Qq \supset \Pp\), let \(I \subset \Delta = \Delta(\fr{p},\fr{a}_\Pp)\) be such that \(\Qq = \Pp_I\) and set
\[ A_{\Pp, \Qq} = \exp ( \textstyle\bigcap\limits_{\alpha \in \Delta \setminus I} \ker \alpha ) \]
Since the simple roots \(\Delta(\fr{p}, \fr{a}_\Pp)\) restrict to the simple roots \(\Delta(\fr{p}_I, \fr{a}_I)\), we obtain inclusions \(A_{\Pp, \Qq} \times \overline{A}_\Qq \subset \overline{A}_\Pp\).  If \(o_\Qq \in \overline{A}_\Qq\) denotes the origin, these inclusions combine to give a disjoint decomposition
\[ \overline{A}_\Pp = \textstyle\bigcupdot\limits_{\Qq \supset \Pp} A_{\Pp, \Qq} \times o_\Qq \]
of the corner \(\overline{A}_\Pp\) into the corner point (for \(\Qq = \Pp\)), the boundary edges, the boundary faces, \(\ldots\ \), the boundary hyperfaces and the interior (for \(\Qq = \Gg\)).  In the coordinates \(e(\Qq) = N_P \times A_{\Pp'} \times X_\Pp\) as in \eqref{eqn:coordinatesofep}, the group \(A_{\Pp'}\) can be identified with the group \(A_{\Pp, \Qq}\) \cite{Borel-Ji:Compactifications}*{Lemma~III.9.7, p.\,330}. It follows that the subset \(N_P \times A_{\Pp, \Qq} \times o_\Qq \times X_\Pp\) in \(N_P \times \overline{A}_\Pp \times X_\Pp\) can be identified with \(e(\Qq)\) and hence
\begin{equation} \label{eqn:corner} \underline{e(\Pp)} \cong N_P \times \overline{A}_\Pp \times X_\Pp \end{equation}
has the structure of a real analytic manifold with corners.  For a proof that the involved topologies match, we refer to \cite{Borel-Ji:Compactifications}*{Lemmas III.9.8--10, pp.\,330--332}.  The manifold \(\underline{e(\Pp)}\) is called the \emph{corner} in \(\overline{X}\) corresponding to the rational parabolic subgroup \(\Pp\).  The corners \(\underline{e(\Pp)}\) are open.  With their help neighborhood bases of boundary points in \(\overline{X}\) can be described \cite{Borel-Ji:Compactifications}*{Lemma~III.9.13, p.\,332}.  These demonstrate that \(\overline{X}\) is a Hausdorff space \cite{Borel-Ji:Compactifications}*{Proposition~III.9.14, p.\,333}.  The corners \(\underline{e(\Pp)}\) form an open cover of the bordification \(\overline{X}\).  One verifies that their analytic structures are compatible to conclude the following result \cite{Borel-Ji:Compactifications}*{Proposition~III.9.16, p.\,335}.

\begin{proposition}
\label{prop:realanalytic}
The bordification \(\overline{X}\) has a canonical structure of a real analytic manifold with corners.
\end{proposition}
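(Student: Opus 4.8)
The plan is to present $\overline{X}$ as glued from the corner charts $\underline{e(\Pp)}$ of \eqref{eqn:corner} and to check that this gluing is real-analytic in the category of manifolds with corners. First I would collect what is already available: by \eqref{eqn:corner} each corner $\underline{e(\Pp)} \cong N_P \times \overline{A}_\Pp \times X_\Pp$ is an open subset of $\overline{X}$ carrying the product structure of a real-analytic manifold with corners, and as $\Pp$ ranges over the rational parabolic subgroups of $\Gg$ these corners cover $\overline{X}$. By \fullref{prop:finitelymanygammaparabolic} there are only finitely many rational parabolic subgroups up to $\Gamma$-conjugacy, hence at most countably many altogether, so this atlas is countable and $\overline{X}$ is second countable; it is Hausdorff by \cite{Borel-Ji:Compactifications}*{Proposition~III.9.14, p.\,333}. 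What remains is to verify that the change-of-chart maps are real-analytic isomorphisms of manifolds with corners.

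To do this I would fix two rational parabolic subgroups $\Pp$ and $\Pp'$. From \eqref{eqn:closureofep} and the description $\underline{e(\Pp)} = \textstyle\bigcupdot\limits_{\Qq \supset \Pp} e(\Qq)$ one reads off that
\[ \underline{e(\Pp)} \cap \underline{e(\Pp')} = \textstyle\bigcupdot\limits_{\Qq \supset \Pp,\ \Qq \supset \Pp'} e(\Qq), \]
which is open in each of the two corners. On a summand $e(\Qq)$ the two charts express $e(\Qq)$ through the horospherical coordinates \eqref{eqn:coordinatesofep} relative to $\mathbf{R} = \Pp$, respectively $\mathbf{R} = \Pp'$, both contained in $\Qq$; since $X = e(\Gg)$ lies in every nonempty overlap, it is enough to understand the transition between the two horospherical decompositions $X = N_P \times A_\Pp \times X_\Pp$ and $X = N_{P'} \times A_{\Pp'} \times X_{\Pp'}$ of \fullref{def:rationalhorosphericaldecomposition} and then to track its behaviour on the factors $\overline{A}$ at infinity. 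This transition is assembled from conjugations in $\Gg(\R)$, from the canonical projections $\Pp \rightarrow \Ll_\Pp$ of algebraic groups over $\R$ together with the $\theta_K$-stable lifts singled out by \fullref{prop:uniquelevisubgroup}, and from the analogous data attached to $\Pp'$, all of which are real-analytic maps. The genuine difficulty is not this, but to show that the resulting coordinate change extends real-analytically across the corner locus $o_\Qq$ and, in particular, carries the face stratification of one chart onto that of the other; this is the content of \cite{Borel-Ji:Compactifications}*{Lemmas~III.9.8--10, pp.\,330--332}, which I would invoke. Patching the corner charts along these transition maps then yields the asserted real-analytic manifold-with-corners structure on $\overline{X}$, along the lines of \cite{Borel-Ji:Compactifications}*{Proposition~III.9.16, p.\,335}.

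Finally I would record two remarks adapting the argument to the present setting. First, connectedness of $\Gg$ is immaterial here: the set $\overline{X}$, its convergence class of sequences, and the corner coordinates \eqref{eqn:coordinatesofep} are all unchanged when $\Gg$ is replaced by $\Gg^0$, so the case of disconnected $\Gg$ reduces at once to the connected case treated in \cite{Borel-Ji:Compactifications}. Second, the structure is \emph{canonical}: every ingredient of the atlas depends only on the rational parabolic subgroups of $\Gg$ and on the base point $x_0 = K$ through the unique $\theta_K$-stable $\R$-Levi subgroups of \fullref{prop:uniquelevisubgroup}, so no further choices enter its construction.
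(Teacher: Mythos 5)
Your proposal is correct and follows essentially the same route as the paper, which likewise covers \(\overline{X}\) by the open corners \(\underline{e(\Pp)} \cong N_P \times \overline{A}_\Pp \times X_\Pp\), uses Hausdorffness from \cite{Borel-Ji:Compactifications}*{Proposition~III.9.14} and defers the compatibility of the analytic structures on overlaps to \cite{Borel-Ji:Compactifications}*{Proposition~III.9.16} (with the reduction from disconnected \(\Gg\) to \(\Gg^0\) handled exactly as you indicate). Your expansion of the chart-overlap description and the remark on canonicity are fine; only note that \cite{Borel-Ji:Compactifications}*{Lemmas~III.9.8--10} are really about matching the topologies on the corners, while the analytic extension across the corner locus is the content of Proposition~III.9.16 itself.
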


If one wishes, the corners of \(\overline{X}\) can be smoothed to endow \(\overline{X}\) with the structure of a smooth manifold with boundary \cite{Borel-Serre:Corners}*{Appendix}.  The collar neighborhood theorem thus implies that \(\overline{X}\) is homotopy equivalent to its interior. 

\begin{corollary}
\label{cor:bordificationcontractible}
The bordification \(\overline{X}\) is contractible.
\end{corollary}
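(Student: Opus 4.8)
The plan is to obtain contractibility of $\overline{X}$ from contractibility of its interior, which is the cleaner of the two obvious routes; a direct deformation retraction of $\overline{X}$ onto a point would have to interact with the stratification of $\partial\overline{X}$ by boundary components and is best avoided. By \fullref{prop:realanalytic} and the remark following it, smoothing the corners turns $\overline{X}$ into a smooth manifold with boundary, and the collar neighborhood theorem then produces a deformation retraction of $\overline{X}$ onto its interior; so it suffices to identify that interior and check that it is contractible.

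The first step is to verify that the interior in question is exactly the symmetric space $X = e(\Gg)$. This is built into the construction: by \eqref{eqn:borelserreboundary} the Borel--Serre boundary $\partial\overline{X} = \bigcupdot_{\Pp \subsetneq \Gg} e(\Pp)$ is closed in $\overline{X}$, its complement is $e(\Gg) = X$, and the corner-smoothing procedure of \cite{Borel-Serre:Corners}*{Appendix} is a local modification supported near $\partial\overline{X}$, hence leaves $X$ untouched. Therefore $\overline{X}$ is homotopy equivalent to $X$.

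The second step is to recall the classical fact that $X = G/K$ is contractible. As already noted in \fullref{sec:arithmeticsubgroups}, $X$ is a product of a symmetric space of noncompact type and a Euclidean factor; concretely, the geodesic exponential map at any point of $X$ is a diffeomorphism from the corresponding tangent space onto $X$, exhibiting $X$ as diffeomorphic to a Euclidean space, and in particular contractible. Combining this with the homotopy equivalence of the first step gives the corollary.

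I do not anticipate any real obstacle, since all of the substantive work has already gone into \fullref{prop:realanalytic}. The one point deserving explicit mention is the identification of the smooth interior with $e(\Gg)$ in the first step; this is immediate from the description \eqref{eqn:borelserreboundary} of $\partial\overline{X}$ together with the locality of corner smoothing, so no genuine difficulty arises.
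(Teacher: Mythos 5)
Your proposal is correct and follows exactly the paper's own argument: smooth the corners via the Borel--Serre appendix, apply the collar neighborhood theorem to get a homotopy equivalence of $\overline{X}$ with its interior $X = e(\Gg)$, and use that the symmetric space $X$ (noncompact type times Euclidean factor) is contractible. The only difference is that you spell out the identification of the interior with $X$, which the paper leaves implicit.
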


Another corollary of \fullref{prop:realanalytic} together with \fullref{prop:closureofep} is that the closures of boundary components \(\overline{e(\Pp)}\) are real analytic manifolds with corners as well.  In fact, the inclusion \(\overline{e(\Pp)} \subset \overline{X}\) realizes \(\overline{e(\Pp)}\) as a \emph{submanifold with corners} of \(\overline{X}\).  Note that topologically a manifold with corners is just a manifold with boundary.  We conclude this section with the observation that
\begin{equation}
\label{eqn:intersectionofboundarycomponents} \overline{e(\Pp)} \cap \overline{e(\Qq)} = \overline{e(\Pp \cap \Qq)}
\end{equation}
if \(\Pp \cap \Qq\) is rational parabolic.  Otherwise the intersection is empty.  Dually,
\[ \underline{e(\Pp)} \cap \underline{e(\Qq)} = \underline{e(\mathbf{R})} \]
where now \(\mathbf{R}\) denotes the smallest rational parabolic subgroup of \(\Gg\) that contains both \(\Pp\) and \(\Qq\).  If \(\mathbf{R} = \Gg\), the intersection equals \(X\).

\subsection{Quotients}
\label{sec:quotients}

We extend the action of \(\Gg(\Q)\) on \(X\) to an action on \(\overline{X}\).  Given \(g \in \Gg(\Q)\) and a rational parabolic subgroup \(\Pp\), let \(k \in K\), \(n \in N_P\), \(a \in A_\Pp\) and \(m \in M_\Pp\) such that \(g = kman\).  Note that we have swapped \(m\) and \(n\) compared to the order in the rational Langlands decomposition in \fullref{def:rationallanglandsdecomposition}.  This ensures that \(a\) and \(n\) are unique.  In contrast, the elements \(k\) and \(m\) can be altered from right and left by mutually inverse elements in \(K_P\).  Their product \(km\) is however well-defined.  We therefore obtain a well-defined map \(g. \co e(\Pp) \rightarrow e({}^k \Pp)\) setting
\begin{equation} \label{eqn:actiononbordification} g.(n_0, m_0K_P) = ({}^{kma} (n n_0), {}^k (m m_0) K_{{}^k\! P}). \end{equation}
Using the convergence class of sequences, one checks easily that this defines a continuous and in fact a real analytic action of \(\Gg(\Q)\) on \(\overline{X}\) which extends the action on \(X\) \cite{Borel-Ji:Compactifications}*{Propositions III.9.15--16, pp.\,333--335}.  The restricted action of \(\Gamma \subset \Gg(\Q)\) is proper \cite{Borel-Ji:Compactifications}*{Proposition~III.9.17, p.\,336} and thus free because \(\Gamma\) is torsion-free.  The quotient \(\Gamma \backslash \overline{X}\) is therefore Hausdorff and in fact a real analytic manifold with corners.  It is called the \emph{Borel--Serre compactification} of the locally symmetric space \(\Gamma \backslash X\) in view of the following result \cite{Borel-Ji:Compactifications}*{Theorem~III.9.18, p.\,337}.  

\begin{theorem} \label{thm:borelserrecompact}
The real analytic manifold with corners \(\Gamma \backslash \overline{X}\) is compact.
\end{theorem}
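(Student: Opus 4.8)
The plan is to derive the compactness of $\Gamma\backslash\overline{X}$ from reduction theory for $\Gamma$ together with the explicit corner coordinates of \eqref{eqn:corner}. First I would invoke reduction theory (see for instance \cite{Borel:IntroductionAuxGroupes} or \cite{Borel-Ji:Compactifications}*{Chapter~III.2}): there is a finite family $\fr{S}_1,\dots,\fr{S}_k$ of Siegel sets in $X$, each $\fr{S}_j$ associated with a minimal rational parabolic subgroup $\Pp_j\subset\Gg$ (and some base point), such that $X=\bigcup_{j=1}^k\Gamma\cdot\fr{S}_j$; the finiteness of this family ultimately rests on \fullref{prop:finitelymanygammaparabolic}. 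In the rational horospherical coordinates $X=N_{P_j}\times A_{\Pp_j}\times X_{\Pp_j}$ such a Siegel set has the form $\fr{S}_j=\omega_j\times A_{\Pp_j,t_j}\times\sigma_j$ with $\omega_j\subset N_{P_j}$ and $\sigma_j\subset X_{\Pp_j}$ relatively compact and $A_{\Pp_j,t_j}=\{a\in A_{\Pp_j}:a^\alpha>t_j\text{ for every simple parabolic root }\alpha\}$. Taking $\sigma_j$ relatively compact is legitimate because $\Pp_j$ is minimal, so $\rank_\Q(\Mm_{\Pp_j})=0$ and hence $\Gamma_{M_{\Pp_j}}'\backslash X_{\Pp_j}$ is already compact.

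Second, I would check that the closure $\overline{\fr{S}_j}$ of $\fr{S}_j$ in $\overline{X}$ is compact. Since $\fr{S}_j\subset X\subset\underline{e(\Pp_j)}$ and the corner of $\Pp_j$ is $\underline{e(\Pp_j)}\cong N_{P_j}\times\overline{A}_{\Pp_j}\times X_{\Pp_j}$ by \eqref{eqn:corner}, it suffices to observe that $\fr{S}_j$ is contained in the set $\overline{\omega_j}\times\overline{A_{\Pp_j,t_j}}\times\overline{\sigma_j}\subset\underline{e(\Pp_j)}$, which is compact: the outer two factors have compact closure by construction, while $\overline{A_{\Pp_j,t_j}}$ is compact because under the chart $\varphi_{\Pp_j}$ the set $A_{\Pp_j,t_j}$ is the box $(0,t_j^{-1})^l$, whose closure $[0,t_j^{-1}]^l$ lies in $[0,\infty)^l=\overline{A}_{\Pp_j}$. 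Being a closed subset of a compact set, $\overline{\fr{S}_j}$ is compact. (In fact the closure of $\fr{S}_j$ equals the displayed product, as one reads off from the convergence class of sequences, but only the inclusion is needed here.)

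Finally, I would assemble the pieces. The set $D=\bigcup_{j=1}^k\bigcup_{\gamma\in\Gamma}\gamma\cdot\overline{\fr{S}_j}$ contains $\bigcup_{j=1}^k\Gamma\cdot\fr{S}_j=X=e(\Gg)$, which is dense in $\overline{X}=\overline{e(\Gg)}$ (the case $\Pp=\Gg$ of \fullref{prop:closureofep}). On the other hand $D$ is closed: the $\Gamma$-action on $\overline{X}$ is proper, as recalled above, so, since each $\overline{\fr{S}_j}$ is compact, every point of $\overline{X}$ has a neighborhood meeting only finitely many of the translates $\gamma\cdot\overline{\fr{S}_j}$; hence $\{\gamma\cdot\overline{\fr{S}_j}:1\le j\le k,\ \gamma\in\Gamma\}$ is a locally finite family of closed sets, whose union is therefore closed. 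Consequently $D=\overline{X}$, so $\Gamma\backslash\overline{X}$ is covered by the images of the finitely many compact sets $\overline{\fr{S}_1},\dots,\overline{\fr{S}_k}$ and thus is compact.

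The main obstacle is the second step, namely that the closure of a Siegel set in $\overline{X}$ is compact: this is the crux of the whole construction, and it is precisely what the topology on $\overline{X}$ was engineered to guarantee, relying on the corner coordinates $\underline{e(\Pp)}\cong N_P\times\overline{A}_\Pp\times X_\Pp$ and on the compactness of $\overline{A_{\Pp,t}}$ inside $\overline{A}_\Pp$ developed in \fullref{sec:bordification}. The remaining steps are purely formal, using only properness of the $\Gamma$-action and density of $X$ in $\overline{X}$. One could alternatively induct on $\rank_\Q(\Gg)$: the base case $\rank_\Q(\Gg)=0$ gives $\overline{X}=X$ with $\Gamma\backslash X$ compact, and in the inductive step \fullref{prop:closureofep} exhibits each $\Gamma_P\backslash\overline{e(\Pp)}$ as a bundle with compact nilmanifold fibre over the Borel--Serre compactification $\Gamma_{M_\Pp}'\backslash\overline{X}_\Pp$, compact by induction; but one still needs reduction theory to see that these boundary pieces, together with a compact piece of the interior, exhaust $\Gamma\backslash\overline{X}$.
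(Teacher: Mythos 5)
Your argument is correct, and it is essentially the proof of the result the paper invokes: the paper itself only cites Borel--Ji, Theorem~III.9.18, whose proof is exactly this reduction-theoretic argument that finitely many Siegel sets $\omega\times A_{\Pp,t}\times\sigma$ cover $X$ up to $\Gamma$ and have compact closure in $\overline{X}$ via the corner coordinates $N_P\times\overline{A}_\Pp\times X_\Pp$. So you have reproduced the standard approach rather than found a genuinely different route.
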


By \fullref{cor:bordificationcontractible} the Borel--Serre compactification \(\Gamma \backslash \overline{X}\) is a classifying space for \(\Gamma\).  The subgroup \(\Gamma_P = \Gamma \cap \mathcal{N}_G (P)\) of \(\Gamma\) leaves \(e(\Pp)\) invariant.  Let us denote the quotient by \(e'(\Pp) = \Gamma_P \backslash e(\Pp)\).  Since \(g.e(\Pp) \,\cap\, e(\Pp) = \emptyset\) for every \(g \in \Gamma\) that does  not lie in \(\Gamma_P\), we have the following disjoint decomposition of the quotient \(\Gamma \backslash \overline{X}\) \cite{Borel-Ji:Compactifications}*{Proposition~III.9.20, p.\,337}.

\begin{proposition}
\label{prop:decompofborelserre}
Let \(\Pp_1, \ldots, \Pp_r\) be a system of representatives of \(\Gamma\)-conjugacy classes of rational parabolic subgroups in \(\Gg\).  Then
\[ \Gamma \backslash \overline{X} = \textstyle\bigcupdot\limits_{i = 1}^r e'(\Pp_i). \]
\end{proposition}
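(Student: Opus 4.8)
The plan is to descend everything from the coproduct decomposition \(\overline{X} = \coprod_{\Pp \subset \Gg} e(\Pp)\). First I would record how \(\Gamma\) permutes the boundary components. Reading off \eqref{eqn:actiononbordification} — with the observation that for \(g = kman\) the inner factor \(man \in P\) normalizes \(\Pp\), so \({}^k\Pp = {}^g\Pp\) — one gets \(g.e(\Pp) = e({}^g\Pp)\) for every \(g \in \Gamma\) and every rational parabolic subgroup \(\Pp \subset \Gg\). Thus the \(\Gamma\)-action on \(\overline{X}\) induces on the index set of components precisely the conjugation action of \(\Gamma\) on rational parabolic subgroups, which by \fullref{prop:finitelymanygammaparabolic} has only finitely many orbits; this is what lets us fix representatives \(\Pp_1, \dots, \Pp_r\). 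Collecting boundary components orbit by orbit produces a \(\Gamma\)-invariant partition \(\overline{X} = \bigcupdot_{i=1}^{r} U_i\), where \(U_i\) is the \(\Gamma\)-orbit of the single component \(e(\Pp_i)\) — equivalently, the union of the \(e(\Qq)\) over all rational parabolic \(\Qq\) in the \(\Gamma\)-conjugacy class of \(\Pp_i\). Passing to quotients gives \(\Gamma \backslash \overline{X} = \bigcupdot_{i=1}^{r} \Gamma \backslash U_i\), so the task reduces to identifying \(\Gamma \backslash U_i\) with \(e'(\Pp_i) = \Gamma_{P_i} \backslash e(\Pp_i)\).

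For this I would first note that the setwise stabilizer of \(e(\Pp_i)\) in \(\Gamma\) equals \(\Gamma_{P_i}\): it contains \(\Gamma_{P_i}\), which leaves \(e(\Pp_i)\) invariant, and is contained in \(\Gamma_{P_i}\) by the disjointness \(g.e(\Pp_i) \cap e(\Pp_i) = \emptyset\) for \(g \notin \Gamma_{P_i}\) recorded just before the statement. Since \(U_i\) is the \(\Gamma\)-orbit of \(e(\Pp_i)\) and distinct boundary components are disjoint, the composite \(e(\Pp_i) \hookrightarrow U_i \to \Gamma \backslash U_i\) is \(\Gamma_{P_i}\)-invariant and descends to a continuous bijection \(\overline{\phi}_i \co e'(\Pp_i) = \Gamma_{P_i} \backslash e(\Pp_i) \to \Gamma \backslash U_i\): surjectivity is the orbit statement, and injectivity says that two points of \(e(\Pp_i)\) lying in one \(\Gamma\)-orbit already lie in one \(\Gamma_{P_i}\)-orbit, which is the disjointness fact once more.

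It remains to promote each \(\overline{\phi}_i\) to a homeomorphism, so that the \(e'(\Pp_i)\) decompose \(\Gamma \backslash \overline{X}\) as a topological space — and, the \(\Gamma\)-action being free and proper, compatibly with the real analytic structures of \fullref{thm:borelserrecompact}. The geometric input I would use is that each component \(e(\Qq)\) is both open and closed inside its orbit \(U_i\): by the defining convergence class of sequences (\fullref{sec:bordification}) a sequence in \(e(\Qq)\) can only converge to a point of some \(e(\mathbf{R})\) with \(\mathbf{R} \subseteq \Qq\), and conjugate rational parabolic subgroups, having equal dimension, cannot be nested unless equal; so no sequence in \(e(\Qq)\) can converge to a point of \(U_i \setminus e(\Qq)\). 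Granting this, for open \(V \subseteq e(\Pp_i)\) one computes \(\Gamma.V \cap g.e(\Pp_i) = g.(\Gamma_{P_i}.V)\), which is open in the clopen piece \(g.e(\Pp_i)\), so \(\Gamma.V\) is open in \(U_i\); since \(U_i\) is \(\Gamma\)-invariant and the orbit map \(\overline{X} \to \Gamma \backslash \overline{X}\) is open, this forces \(\overline{\phi}_i\) to be open, hence a homeomorphism onto the (locally closed) subset \(\Gamma \backslash U_i \subseteq \Gamma \backslash \overline{X}\). I expect this topological bookkeeping to be the main obstacle, precisely because \(U_i\) is in general neither open nor closed in \(\overline{X}\): matching the subspace topology on \(\Gamma \backslash U_i\) with the quotient topology of \(e'(\Pp_i)\) genuinely relies on the clopenness of the \(e(\Qq)\) within \(U_i\). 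The purely set-theoretic decomposition, by contrast, is immediate from the disjointness of boundary components and requires nothing beyond the first two paragraphs.
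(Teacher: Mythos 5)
Your argument is correct, and its set-theoretic core is exactly the reasoning the paper has in mind: the paper itself offers no proof beyond recording that \(\Gamma_P\) leaves \(e(\Pp)\) invariant and that \(g.e(\Pp) \cap e(\Pp) = \emptyset\) for \(g \in \Gamma \setminus \Gamma_P\), and then citing \cite{Borel-Ji:Compactifications}*{Proposition~III.9.20, p.\,337}. Your first two paragraphs — \(g.e(\Pp) = e({}^g\Pp)\) since \(man \in P\) normalizes \(\Pp\), finiteness of the \(\Gamma\)-conjugacy classes via \fullref{prop:finitelymanygammaparabolic}, the stabilizer of \(e(\Pp_i)\) being \(\Gamma_{P_i}\), and the resulting bijection \(\Gamma_{P_i} \backslash e(\Pp_i) \to \Gamma \backslash U_i\) — make that cited argument explicit, and since the paper's symbol \(\bigcupdot\) only asserts a disjoint union of subsets, this already proves the proposition as stated. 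The topological upgrade in your last paragraph goes beyond what the statement requires, though it is worth having since \(e'(\Pp)\) is later treated as a subspace (indeed a subcomplex) of \(\Gamma \backslash \overline{X}\); your saturation step is sound, because if \(\Gamma.V = U_i \cap O\) with \(O\) open in \(\overline{X}\), then \(\Gamma\)-invariance of \(U_i\) gives \(\Gamma.O \cap U_i = \Gamma.V\), and openness of the projection does the rest.

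One imprecision to fix there: you justify the half of ``clopen'' that you never use. Your sequence argument shows that \(e(\Qq)\) is \emph{closed} in \(U_i\) (no sequence in \(e(\Qq)\) converges out of it), but the step ``trace open in each piece, hence \(\Gamma.V\) open in \(U_i\)'' needs each piece \(e(\Qq)\) to be \emph{open} in \(U_i\). Openness does not follow formally from your stated fact, and arguing it by sequential closedness of the complement \(U_i \setminus e(\Qq)\) is delicate, since that complement is an infinite union of components and the convergence class treats sequences spread over infinitely many components awkwardly. The clean fix is already in the paper: the corner \(\underline{e(\Qq)} = \bigcupdot_{\mathbf{R} \supseteq \Qq} e(\mathbf{R})\) is open in \(\overline{X}\), and since \(\Gamma\)-conjugate rational parabolic subgroups cannot be properly nested, \(U_i \cap \underline{e(\Qq)} = e(\Qq)\); equivalently one can quote \eqref{eqn:closureofep} for closedness and the corners for openness. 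With that substitution your proof is complete.
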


The closure of \(e'(\Pp)\) in \(\Gamma \backslash \overline{X}\) is compact and has the decomposition
\begin{equation} \label{closureofeprimep} \overline{e'(\Pp)} = \textstyle\bigcupdot\limits_{\Qq \subset \Pp} e'(\Qq). \end{equation}
This follows from the compatibilities \(e'(\Pp) = \nu(e(\Pp))\) and \(\overline{e'(\Pp)} = \nu(\overline{e(\Pp)})\) and from \eqref{eqn:closureofep} where \(\nu \co \overline{X} \rightarrow \Gamma \backslash \overline{X}\) denotes the canonical projection \cite{Borel-Serre:Corners}*{Proposition~9.4, p.\,476}.  By \eqref{eqn:closureofep} and the remarks preceding \fullref{prop:decompofborelserre} we see that \(\overline{e'(\Pp)} = \nu(\overline{e(\Pp)})\) also equals \(\Gamma_P \backslash \overline{e(\Pp)}\).  We will examine this latter quotient.

Let \(\Gamma_{N_P} = \Gamma \cap N_P\).  The rational Langlands decomposition \ref{def:rationallanglandsdecomposition} defines a projection \(P \rightarrow M_\Pp\).  Let \(\Gamma_{M_\Pp}\) be the image of \(\Gamma_P\) under this projection.  Equivalently, \(\Gamma_{M_\Pp}\) is the canonical lifting under \(\pi\) of the group \(\Gamma_{M_\Pp}'\) defined below \fullref{def:rationalhorosphericaldecomposition}, see \cite{Borel-Ji:CompactificationsDiffGeo}*{Proposition~2.6, p.\,272}.  We should however not conceal a word of warning.  The lift \(\Gamma_{M_\Pp}' \rightarrow \Gamma_{M_\Pp}\) does not necessarily split the exact sequence
\[ 1 \longrightarrow \Gamma_{N_P} \longrightarrow \Gamma_P \longrightarrow \Gamma_{M_\Pp}' \longrightarrow 1, \]
not even if the suppressed base point was rational for \(\Pp\).  By \cite{Borel-Ji:CompactificationsDiffGeo}*{Propositions 2.6 and 2.8, p.\,272} we have \(\Gamma_P \subset N_P \Gamma_{M_\Pp} = N_P \Gamma_P\).  We analyze how the action of \(\Gamma_P\) on \(\overline{e(\Pp)}\) behaves regarding the decomposition \(\overline{e(\Pp)} = N_P \times \overline{X}_\Pp\) of \fullref{prop:closureofep}. 

\begin{proposition}
\label{prop:actiononoverlineep}
Let \(p \in \Gamma_P\) and let \(p = mn\) be its unique decomposition with \(m \in \Gamma_{M_\Pp}\) and \(n \in N_P\).  Let \((n_0, x) \in N_P \times \overline{X}_\Pp = \overline{e(\Pp)}\).  Then
\[ p.(n_0, x) = ({}^m(n n_0), m.x). \]
\end{proposition}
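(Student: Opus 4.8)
The plan is to reduce the asserted identity to the explicit formula \eqref{eqn:actiononbordification} for the $\Gg(\Q)$-action on the top stratum $e(\Pp)$, and then to propagate it from $e(\Pp)$ to its closure $\overline{e(\Pp)}$ by continuity and density.

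First I would observe that the decomposition $p = mn$ in the statement is nothing but the rational Langlands decomposition of \fullref{def:rationallanglandsdecomposition} with trivial $A_\Pp$-component. Indeed $m \in \Gamma_{M_\Pp} \subset M_\Pp$ and $n \in N_P$, so $p = mn \in P$; that such a decomposition exists for every $p \in \Gamma_P$ is exactly the inclusion $\Gamma_P \subset N_P\Gamma_{M_\Pp}$ recalled before \fullref{prop:actiononoverlineep} (using that $M_\Pp$ normalizes $N_P$), and the uniqueness is part of \fullref{def:rationallanglandsdecomposition}. Writing $p$ in the shape ``$kman$'' required by \eqref{eqn:actiononbordification} we may therefore take $k = 1$ and $a = 1$; in particular ${}^{k}\Pp = \Pp$, so $p$ maps $e(\Pp)$ to itself, hence maps its closure $\overline{e(\Pp)}$ to itself.

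Next I would substitute $k = a = 1$ into \eqref{eqn:actiononbordification}. For a point $(n_0, m_0 K_P) \in e(\Pp) = N_P \times X_\Pp$ this reads
\[ p.(n_0, m_0 K_P) = ({}^{m}(n n_0),\, m m_0 K_P) = ({}^{m}(n n_0),\, m.(m_0 K_P)), \]
where $m.$ now denotes left translation by $m \in M_\Pp$ on the symmetric space $X_\Pp = M_\Pp/K_P$. Since the $\Gamma_{M_\Pp}$-action on the Borel--Serre bordification $\overline{X}_\Pp$ restricts, on the dense open subset $X_\Pp \subset \overline{X}_\Pp$, to exactly this left translation action, the identity $p.(n_0, x) = ({}^{m}(n n_0),\, m.x)$ holds for every $x \in X_\Pp$; that is, it holds on $e(\Pp)$ viewed inside $\overline{e(\Pp)} = N_P \times \overline{X}_\Pp$ via the identification of \fullref{prop:closureofep}.

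Finally I would extend the identity to all of $\overline{e(\Pp)}$. The assignment $(n_0, x) \mapsto ({}^{m}(n n_0),\, m.x)$ is a continuous self-map of $N_P \times \overline{X}_\Pp$, since multiplication and conjugation in $N_P$ are continuous and the $\Gamma_{M_\Pp}$-action on $\overline{X}_\Pp$ is continuous (indeed real-analytic); and $p.$ is a continuous self-map of $\overline{e(\Pp)}$, being the restriction of a homeomorphism of $\overline{X}$ which carries $\overline{e(\Pp)}$ onto itself. By the previous step these two maps agree on $e(\Pp)$, and $e(\Pp)$ is dense in its closure $\overline{e(\Pp)}$; as $\overline{X}$ is Hausdorff, they therefore coincide on all of $\overline{e(\Pp)}$, which is the claim. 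The main point requiring care is the matching of the topologies of $\overline{e(\Pp)}$ and $N_P \times \overline{X}_\Pp$ needed in the last step, but this was already arranged in \fullref{prop:closureofep}; beyond invoking it, the argument here is a plain substitution into \eqref{eqn:actiononbordification}.
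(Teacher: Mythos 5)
Your argument is correct, but it proves the proposition by a different mechanism than the paper does. You verify the formula only on the open stratum \(e(\Pp)\), where it is an immediate substitution \(k = a = 1\) into \eqref{eqn:actiononbordification}, and then propagate it to the closure by a density--continuity argument, using that \(e(\Pp)\) is dense in \(\overline{e(\Pp)}\), that \(\overline{X}\) is Hausdorff, that the \(\Gg(\Q)\)-action on \(\overline{X}\) and the \(\Mm_\Pp(\Q)\)-action on \(\overline{X}_\Pp\) are continuous, and that \fullref{prop:closureofep} identifies \(\overline{e(\Pp)}\) with \(N_P \times \overline{X}_\Pp\) as topological spaces; all of these facts are indeed established before the proposition, so there is no circularity, and the abstract fact that two continuous maps into a Hausdorff space agreeing on a dense set agree everywhere applies with no sequential subtleties. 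The paper instead argues purely algebraically, stratum by stratum: it places \(x\) in its boundary component \(e(\Qq') \subset \overline{X}_\Pp\), decomposes \(m = km'a'n'\) relative to \(\Qq'\), rewrites \((n_0,x)\) in the coordinates \(e(\Qq) = N_Q \times X_\Qq\) via \eqref{eqn:relativeboundarycomponent}, and applies \eqref{eqn:actiononbordification} directly to \(e(\Qq)\), so that the formula on each boundary stratum is computed explicitly rather than inferred by continuity. Your route is shorter and more conceptual, at the price of invoking the continuity of the extended actions (applied also to \(\Mm_\Pp\)); the paper's route needs more bookkeeping with relative Langlands decompositions but is self-contained at the level of the action formula and makes visible that the \(\Gg(\Q)\)-formula restricted to a stratum of \(\overline{e(\Pp)}\) is literally the \(\Mm_\Pp(\Q)\)-formula, which is the content of the identity \(m.x\) on the boundary pieces.
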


\begin{proof}
There is a unique rational parabolic subgroup \(\Qq \subset \Pp\) and there are unique elements \(n_0' \in N_{Q'}\) and \(m_0' \in M_{Q'}\) such that
\[ x = (n_0', m_0'K_{Q'}) \in N_{Q'} \times X_{\Qq'} = e(\Qq') \subset \overline{X}_\Pp. \]
We decompose \(m \in M_\Pp\) as \(m = km'a'n'\) with \(k \in K_P\), \(m' \in M_{\Qq'}\), \(a' \in A_{\Qq'}\) and \(n' \in N_{Q'}\).  By \eqref{eqn:relativeboundarycomponent} we have \(N_P \times e(\Qq') = e(\Qq) = N_Q \times X_\Qq\) and under this identification our element \((n_0, x)\) corresponds to \((n_0 n_0', m_0'K_Q)\).  We have \(p = km'a'(n'n)\) with \(m' \in M_{\Qq'} = M_\Qq\), \(a' \in A_{\Qq'} \subset A_\Qq\) and \(n'n \in N_Q\).  According to \eqref{eqn:actiononbordification} the element \(p\) therefore acts as
\[ p.(n_0 n_0', m_0' K_Q) = ({}^{km'a'}(n'nn_0n_0'), {}^k (m'm_0')K_{{}^k\! Q}). \]
For the left-hand factor we compute
\begin{align*}
{}^{km'a'}(n'nn_0n_0') &= {}^{km'a'}({}^{n'}(nn_0)n'n_0') = {}^{km'a'n'}\!(nn_0)\ {}^{km'a'}\!(n'n_0') = \\
&= {}^{m}\!(nn_0)\ {}^{km'a'}\!(n'n_0').
\end{align*}
Transforming back from \(N_Q \times X_\Qq\) to \(N_P \times e(\Qq')\) we therefore obtain
\[ p.(n_0,x) = ({}^{m}(nn_0), ({}^{km'a'}(n'n_0'), {}^k(m'm_0') K_{{}^k\! Q})) = ({}^{m}(nn_0), m.x). \proved \]
\end{proof}

If \(\Gamma\) is neat, then \fullref{prop:actiononoverlineep} makes explicit that we have a commutative diagram
\[ \begin{xy}
\xymatrix{
\overline{e(\Pp)} \ar[r] \ar[d] & \Gamma_P \backslash \overline{e(\Pp)} \ar[d] \\
\overline{X}_\Pp \ar[r] & \Gamma_{M_\Pp} \backslash \overline{X}_\Pp
}
\end{xy} \]
of bundle maps of manifolds with corners.  The bundle structure of \(\Gamma_P \backslash \overline{e(\Pp)}\) will later be of particular interest.

\begin{theorem}
\label{thm:gammaepfiberbundle}
Let \(\Gamma \subset \Gg(\Q)\) be a neat arithmetic subgroup.  Then the manifold with corners \(\overline{e'(\Pp)} = \Gamma_P \backslash \overline{e(\Pp)}\) has the structure of a real analytic fiber bundle over the manifold with corners \(\Gamma_{M_\Pp} \backslash \overline{X}_\Pp\) with the compact nilmanifold \(\Gamma_{N_P} \backslash N_P\) as typical fiber.
\end{theorem}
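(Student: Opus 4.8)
The plan is to read off the bundle structure directly from the product description \(\overline{e(\Pp)} = N_P\times\overline X_\Pp\) of \fullref{prop:closureofep} together with the explicit action formula of \fullref{prop:actiononoverlineep}, using that neatness forces all the relevant actions to be free, proper and real-analytic. First I would assemble the auxiliary facts. Since \(\Gamma\) is neat, so is the arithmetic subgroup \(\Gamma_{M_\Pp}' \subset \Mm_\Pp(\Q)\), because neatness is inherited under morphisms of linear algebraic groups; as \(\Mm_\Pp\) again satisfies conditions~(\ref{cond:anisotropiccenter}) and~(\ref{cond:centralizermeetscomponents}), the group \(\Gamma_{M_\Pp}'\), hence its \(\pi\)-lift \(\Gamma_{M_\Pp}\), acts freely, properly and real-analytically on \(\overline X_\Pp\) with compact quotient (apply \fullref{thm:borelserrecompact} and \fullref{prop:realanalytic} to \(\Mm_\Pp\)). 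By reduction theory \(\Gamma_{N_P} = \Gamma\cap N_P\) is a cocompact lattice in the simply connected nilpotent Lie group \(N_P\), so \(\Gamma_{N_P}\backslash N_P\) is a compact nilmanifold. From the inclusion \(\Gamma_P \subset N_P\Gamma_{M_\Pp}\) recorded before \fullref{prop:actiononoverlineep} one checks that \(p = mn\mapsto m\) is a group homomorphism \(\Gamma_P\to\Gamma_{M_\Pp}\)—since \(m_2\) normalises \(N_P\), the ``\(m\)-part'' of \(p_1p_2\) is \(m_1m_2\)—and it is surjective by construction of \(\Gamma_{M_\Pp}\) with kernel \(\Gamma_P\cap N_P = \Gamma_{N_P}\). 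Finally, \(\Gamma_P\) acts freely, properly and real-analytically on \(\overline{e(\Pp)}\) because \(\Gamma\) is neat, so \(\overline{e'(\Pp)} = \Gamma_P\backslash\overline{e(\Pp)}\) is itself a manifold with corners.

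Next I would produce the bundle map and the local trivialisations. By \fullref{prop:actiononoverlineep} the projection \(\textup{pr}\co N_P\times\overline X_\Pp\to\overline X_\Pp\) intertwines the \(\Gamma_P\)-action with the \(\Gamma_{M_\Pp}\)-action along the surjection \(\Gamma_P\to\Gamma_{M_\Pp}\), so it descends to a real-analytic surjection \(f\co\overline{e'(\Pp)}\to\Gamma_{M_\Pp}\backslash\overline X_\Pp\). Fix \(x\in\overline X_\Pp\); as \(\Gamma_{M_\Pp}\) acts freely and properly, choose an open \(U\ni x\) with \(\gamma U\cap U = \emptyset\) for all \(\gamma\in\Gamma_{M_\Pp}\setminus\{1\}\) and let \(\overline U\) be its image in the quotient. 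Then \(\textup{pr}^{-1}(\Gamma_{M_\Pp}U) = N_P\times\Gamma_{M_\Pp}U = \bigsqcup_{\gamma\in\Gamma_{M_\Pp}} N_P\times\gamma U\), and by \fullref{prop:actiononoverlineep} an element \(p = mn\in\Gamma_P\) carries the sheet \(N_P\times\gamma U\) onto \(N_P\times m\gamma U\); hence \(\Gamma_P\) permutes the sheets transitively, the set-stabiliser of \(N_P\times U\) is exactly \(\Gamma_{N_P}\), and \(\Gamma_{N_P}\) acts on \(N_P\times U\) by \((n_0,y)\mapsto(n n_0,y)\). Therefore
\[ f^{-1}(\overline U) \;=\; \Gamma_P\backslash\textup{pr}^{-1}(\Gamma_{M_\Pp}U) \;\cong\; \Gamma_{N_P}\backslash(N_P\times U) \;=\; (\Gamma_{N_P}\backslash N_P)\times U \;\cong\; (\Gamma_{N_P}\backslash N_P)\times\overline U, \]
and under this identification \(f\) becomes the projection onto \(\overline U\).

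Finally I would verify the transition data. If \(\overline U_1,\overline U_2\) come from lifts \(U_1,U_2\) and some \(\gamma\in\Gamma_{M_\Pp}\) relates them over a connected piece of \(\overline U_1\cap\overline U_2\), choose \(p = \gamma n_\ast\in\Gamma_P\); then \fullref{prop:actiononoverlineep} identifies the two fibre coordinates of a point by \(n_0\mapsto {}^\gamma(n_\ast n_0)\). Since \(p\) normalises both \(\Gamma\) and \(N_P\) it normalises \(\Gamma_{N_P}\), so this map descends to a real-analytic diffeomorphism of \(\Gamma_{N_P}\backslash N_P\), and associativity of the \(\Gamma_P\)-action gives the cocycle identity for the resulting transition maps. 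Together with the local trivialisations this exhibits \(f\) as a real-analytic fibre bundle with typical fibre the compact nilmanifold \(\Gamma_{N_P}\backslash N_P\); it remains to check that the manifold-with-corners structure of \(\overline{e'(\Pp)}\) restricts over each \(\overline U\) to the evident product structure. I expect that this last point—keeping track of the real-analytic manifold-with-corners structures and confirming that the transition maps are genuinely well defined on the nilmanifold fibre and satisfy the cocycle condition—is the main obstacle; once \fullref{prop:closureofep} and \fullref{prop:actiononoverlineep} are available, everything else is essentially formal.
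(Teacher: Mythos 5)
Your proposal is correct and follows essentially the same route as the paper, which states the theorem as a direct consequence of the product decomposition \(\overline{e(\Pp)} = N_P \times \overline{X}_\Pp\) from \fullref{prop:closureofep} and the explicit action formula of \fullref{prop:actiononoverlineep} (summarized there in the commutative diagram of bundle maps), without writing out the local trivializations. Your construction of the trivializations over sets \(\overline U\) coming from \(\Gamma_{M_\Pp}\)-small open sets \(U \subset \overline{X}_\Pp\), with the sheet-permutation argument identifying \(f^{-1}(\overline U)\) with \((\Gamma_{N_P}\backslash N_P)\times \overline U\), is exactly the verification the paper leaves implicit, and your final worry is harmless since the cocycle condition and well-definedness of the transition maps are automatic once the trivializations are in place (the choice of \(p\) over \(\gamma\) is unique up to \(\Gamma_{N_P}\), which acts trivially on the fiber \(\Gamma_{N_P}\backslash N_P\)).
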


Also for later purposes we remark that the Borel--Serre compactification \(\Gamma \backslash \overline{X}\) clearly has a finite CW-structure such that the closed submanifolds \(\overline{e'(\Pp)}\) are subcomplexes.  The bordification \(\overline{X}\) is a regular covering of this finite CW complex with deck transformation group \(\Gamma\), in other words a finite free \(\Gamma\)-CW complex in the sense of \cite{tomDieck:TransformationGroups}*{Section II.1, p.\,98}.  In the sequel we want to assume that \(\overline{X}\) is endowed with this \(\Gamma\)-CW structure as soon as a torsion-free arithmetic subgroup \(\Gamma \subset \Gg(\Q)\) is specified.  Then \fullref{cor:bordificationcontractible} and \fullref{thm:borelserrecompact} say in more abstract terms that the bordification \(\overline{X}\) is a cofinite classifying space \(E \Gamma\).  In fact, something better is true.  The bordification is a model for the classifying space \(\underline{E}\Gamma\) for proper group actions for every general, not necessarily torsion-free, arithmetic subgroup \(\Gamma \subset \Gg(\Q)\).  This means every isotropy group is finite and for every finite subgroup \(\Lambda \subset \Gamma\) the fix point set \(\overline{X}^\Lambda\) is contractible (and in particular nonempty).  This was pointed out in \cite{Adem-Ruan:TwistedOrbifold}*{Remark 5.8, p.\,546} and L.\,Ji thereafter supplied a proof in \cite{Ji:IntegralNovikov}*{Theorem~3.2, p.\,520}.

\section{$\textit{L}^{\text{2}}$-invariants}
\label{sec:l2invariants}

In this section we review \(L^2\)-Betti numbers, Novikov--Shubin invariants and \(L^2\)-torsion of \(\Gamma\)-CW complexes following \cite{Lueck:L2-Invariants}*{Chapters 1--3}.  Let \(\Gamma\) be a discrete countable group.  It acts unitarily from the left on the Hilbert space \(\ell^2 \Gamma\) of square-integrable functions \(\Gamma \rightarrow \C\).  This Hilbert space has a distinguished vector \(e \in \Gamma \subset \ell^2 \Gamma\).  The \(\Gamma\)-equivariant bounded operators \(\mathcal{N}(\Gamma) = \mathcal{B}(\ell^2 \Gamma)^\Gamma\) form a weakly closed, unital \(\ast\)-subalgebra of \(\mathcal{B}(\ell^2 \Gamma)\) called the \emph{group von Neumann algebra} of \(\Gamma\).  This algebra comes endowed with a canonical trace \(\textup{tr}_{\mathcal{N}(\Gamma)}\) given by the matrix coefficient corresponding to the distinguished vector, \(\textup{tr}_{\mathcal{N}(\Gamma)}(f) = \langle f(e), e \rangle\).  The trace \(\textup{tr}_{\mathcal{N}(\Gamma)}\) extends diagonally to positive \(\Gamma\)-equivariant bounded endomorphisms of a direct sum \(\oplus_{k = 1}^n \ell^2 \Gamma\).

Let \(X\) be a finite free \(\Gamma\)-CW-complex in the sense of \cite{tomDieck:TransformationGroups}*{Section II.1, p.\,98}.  Equivalently, \(X\) is a Galois covering of a finite CW-complex with deck transformation group \(\Gamma\).  Let \(C_*(X)\) be the cellular \(\Z \Gamma\)-chain complex.  The \(L^2\)-completion \(C_*^{(2)}(X) = \ell^2 \Gamma \otimes_{\Z \Gamma} C_*(X)\) is called the \emph{\(L^2\)-chain complex}.  The differentials \(c_p \co C_p^{(2)}(X) \rightarrow C_{p-1}^{(2)}(X)\) are \(\Gamma\)-equivariant bounded operators induced from the differentials in \(C_*(X)\).  These define the \(p\)-th \emph{Laplace operator} \(\Delta_p \co C_p^{(2)}(X) \rightarrow C_p^{(2)}(X)\) given by \(\Delta_p = c_{p+1} c_{p+1}^* + c_p^* c_p\).  Let \(\{ E^p_\lambda \}\) be the family of \(\Gamma\)-equivariant spectral projections associated with \(\Delta_p\).  Choosing a cellular basis of \(X\) yields identifications \(C^{(2)}_p(X) = \oplus_{k=0}^{n_p} \ell^2 \Gamma\) where \(n_p\) is the number of equivariant \(p\)-cells in \(X\).  Two such identifications differ by a unitary transformation.  As the trace is constant on unitary conjugacy classes, the following definition is justified.

\begin{definition}
The \(p\)-th \emph{spectral density function} of \(X\) is given by
\[ F_p \co [0, \infty) \to [0,\infty), \quad \lambda \mapsto \textup{tr}_{\mathcal{N}(\Gamma)}(E^p_\lambda). \]
\end{definition}

Spectral density functions are density functions in the measure theoretic sense; they are monotone non-decreasing and right-continuous.

\begin{definition}[Cellular \(L^2\)-invariants]~
\label{def:l2invariants}
\begin{enumerate}[(i)]
\item \label{item:l2invariants:l2betti} The \(p\)-th \emph{\(L^2\)-Betti number} of \(X\) is given by
\[ b^{(2)}_p(X; \mathcal{N}(\Gamma)) = F_p(0) \ \in [0,\infty). \]
\item \label{item:l2invariants:novikovshubin} The \(p\)-th \emph{Novikov-Shubin invariant} of \(X\) is given by
\[ \widetilde{\alpha}_p(X; \mathcal{N}(\Gamma)) = \liminf_{\lambda \rightarrow 0^+} \frac{\log(F_p(\lambda) - F_p(0))}{\log(\lambda)} \ \in [0, \infty] \]
unless \(F_p(\varepsilon) = F_p(0)\) for some \(\varepsilon > 0\) in which case we set \(\widetilde{\alpha}_p(X; \mathcal{N}(\Gamma)) = \infty^+\).
\item \label{item:l2invariants:l2torsion} The \emph{\(L^2\)-torsion} of \(X\) is given by
\[ \rho^{(2)}(X; \mathcal{N}(\Gamma)) = -\frac{1}{2} \sum_{p \geq 0} (-1)^p \,p \int_{0^+}^\infty \log(\lambda)\, \textup{d}F_p(\lambda) \ \in \R \]
where we require \(F_p(0) = 0\) and \(\int_{0^+}^\infty \log(\lambda)\, \textup{d}F_p(\lambda) > -\infty\) for each \(p\).
\end{enumerate}
\end{definition}

Frequently we will suppress \(\mathcal{N}(\Gamma)\) from our notation.  We give some explanations.  The trace of a spectral projection gives the so-called \emph{von Neumann dimension} of its image.  Therefore the \(p\)-th \(L^2\)-Betti number equals the von Neumann dimension of the harmonic \(L^2\)-\(p\)-chains, whence the terminology.  Novikov-Shubin invariants measure how slowly the spectral density function grows in a neighborhood of zero.  The fractional expression is so chosen that it returns \(k\) if \(F_p\) happens to be a polynomial with highest order \(k\).  The value ``\(\infty^+\)'' is just a formal symbol that indicates a spectral gap of \(\Delta_p\) at zero.  We agree that \(\infty^+ > \infty > r\) for all \(r \in \R\).   In the definition of \(L^2\)-torsion we integrate the natural logarithm over the Borel space \((0,\infty)\) with respect to the Lebesgue-Stieltjes measure defined by the density function \(F_p\).  This gives the so-called \emph{Fuglede--Kadison determinant} of \(\Delta_p\).  Note that \(F_p\) equals \(n_p\) after finite time so there  is no issue with divergence to \(+\infty\).  Conjecturally it is also always true that \(\int_{0^+}^\infty \log(\lambda)\, \textup{d}F_p(\lambda) > -\infty\).  This is known if \(\Gamma\) lies in a large class of groups \(\mathcal{G}\) that notably contains all residually finite groups \cite{Schick:L2-Determinant}.  For short we will say that \(X\) is \emph{\(\det\)-\(L^2\)-acyclic} if it satisfies the conditions in (\ref{item:l2invariants:l2torsion}).

For many purposes it is more convenient to work with a finer version of Novikov-Shubin invariants \(\alpha_p(X)\) which we obtain replacing the operator \(\Delta_p\) by \(c_p|_{\textup{im}(c_{p+1})^\perp}\).  We get back the above version by the formula \(\widetilde{\alpha}_p(X) = \frac{1}{2} \min \{\alpha_p(X), \alpha_{p+1}(X) \}\).  Moreover, a finite free \(\Gamma\)-CW-pair \((X,A)\) defines a \emph{relative \(L^2\)-chain complex} \(C_*^{(2)}(X,A)\).  Its Laplacians define the \emph{relative} \(L^2\)-invariants \(b_p^{(2)}(X,A)\), \(\alpha_p(X,A)\) and also \(\rho^{(2)}(X, A)\) provided \((X,A)\) is \(\det\)-\(L^2\)-acyclic.

\begin{theorem}[Selected properties of cellular \(L^2\)-invariants] \label{thm:propofcellularl2}\quad
\begin{enumerate}[(i)]
\item \label{item:propofcellularl2:homotopy} {\bf Homotopy invariance.}  Let \(f \co X \rightarrow Y\) be a weak \(\Gamma\)-homotopy equivalence of finite free \(\Gamma\)-CW-complexes.  Then
\[ b_p^{(2)}(X) = b_p^{(2)}(Y) \quad \text{and} \quad \alpha_p(X) = \alpha_p(Y) \text{ for all } p \geq 0. \]
Suppose that \(X\) or \(Y\) is \(L^2\)-acyclic and that \(\Gamma \in \mathcal{G}\).  Then
\[ \rho^{(2)}(X) = \rho^{(2)}(Y). \]
\item \label{item:propofcellularl2:poincare} {\bf Poincar\'e duality.}  Let the \(\Gamma\)-CW-pair \((X, \partial X)\) be an equivariant triangulation of a free proper cocompact orientable \(\Gamma\)-manifold of dimension \(n\) with possibly empty boundary.  Then
\[ b_p^{(2)}(X) = b_{n-p}^{(2)}(X, \partial X) \quad \text{and} \quad \alpha_p(X) = \alpha_{n+1-p}(X, \partial X). \]
Suppose \(X\) is \(\det\)-\(L^2\)-acyclic.  Then so is \((X, \partial X)\) and
\[ \rho^{(2)}(X) = (-1)^{n+1} \rho^{(2)}(X, \partial X). \]
Thus \(\rho^{(2)}(X) = 0\) if the manifold is even-dimensional and has empty boundary.
\item \label{item:propofcellularl2:fiberbundle} {\bf Euler characteristic and fiber bundles.}  Let \(X\) be a connected finite CW-complex.  Then the classical Euler characteristic \(\chi(X)\) can be computed as
\[ \chi(X) = \sum\limits_{p \geq 0} (-1)^p \,b_p^{(2)}(\widetilde{X}). \]
Let \(F \rightarrow E \rightarrow B\) be a fiber bundle of connected finite CW-complexes.  Assume that the inclusion \(F_b \rightarrow E\) of one (then every) fiber induces an injection of fundamental groups.  Suppose that \(\widetilde{F}_b\) is \(\det\)-\(L^2\)-acyclic.  Then so is \(\widetilde{E}\) and
\[ \rho^{(2)}(\widetilde{E}) = \chi(B) \cdot \rho^{(2)}(\widetilde{F}). \]
\item \label{item:propofcellularl2:aspherical} {\bf Aspherical CW-complexes and elementary amenable groups.}  Let \(X\) be a finite CW-complex with contractible universal covering.  Suppose that \(\Gamma = \pi_1(X)\) is of \(\det \geq 1\)-class and contains an elementary amenable infinite normal subgroup.  Then
\[ b_p^{(2)}(\widetilde{X}) = 0 \text{ for } p \geq 0, \quad \alpha_p(\widetilde{X}) \geq 1 \text{ for } p \geq 1 \quad \text{and} \quad \rho^{(2)}(\widetilde{X}) = 0. \]
\end{enumerate}
\end{theorem}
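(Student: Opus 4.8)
The plan is to assemble the four items from the established theory of \(L^2\)-invariants, essentially in the form developed in L\"uck's monograph \cite{Lueck:L2-Invariants}; the only real work is to stitch together the right references and to verify that their hypotheses hold in the stated generality --- above all the class \(\mathcal{G}\), respectively \(\det\geq 1\)-class, conditions that enter every assertion about \(L^2\)-torsion, and the \(\det\)-\(L^2\)-acyclicity that is presupposed whenever \(\rho^{(2)}\) appears. No new idea is needed for any of the four parts.

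For (\ref{item:propofcellularl2:homotopy}) I would first recall that \(b_p^{(2)}\) and the fine invariants \(\alpha_p\) depend only on the \(\mathcal{N}(\Gamma)\)-chain homotopy type of \(C_*^{(2)}(X)\): a weak \(\Gamma\)-homotopy equivalence of finite free \(\Gamma\)-CW complexes induces a \(\Z\Gamma\)-chain homotopy equivalence, hence an \(\mathcal{N}(\Gamma)\)-chain homotopy equivalence after \(L^2\)-completion, and von Neumann dimensions of \(L^2\)-homology together with the Novikov--Shubin behaviour of the differentials are unchanged under such. For \(\rho^{(2)}\) this is not enough, since a chain homotopy equivalence contributes a Whitehead-torsion type correction; here one uses that for \(\Gamma \in \mathcal{G}\) the Fuglede--Kadison determinant of that correction equals \(1\) and that determinants are multiplicative along short exact sequences, so \(\rho^{(2)}\) is unchanged. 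Item (\ref{item:propofcellularl2:poincare}) is Poincar\'e--Lefschetz duality carried out on the \(L^2\)-chain level: for an equivariant triangulation the simplicial chain complex of \((X,\partial X)\) is \(\Gamma\)-chain homotopy equivalent to the cochain complex of \(X\) under the reindexing \(p \mapsto n-p\), and dualizing matches \(\Delta_p\) on \(X\) with (the adjoint picture of) \(\Delta_{n-p}\) on \((X,\partial X)\). This gives the Betti-number identity at once; the shift \(\alpha_p(X) = \alpha_{n+1-p}(X,\partial X)\) appears because \(\alpha_p\) is read off the differential \(c_p\) and duality sends \(c_p\) to the adjoint of \(c_{n+1-p}\) of the relative complex; and the torsion formula \(\rho^{(2)}(X) = (-1)^{n+1}\rho^{(2)}(X,\partial X)\) follows from the corresponding bookkeeping of Fuglede--Kadison determinants. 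In the closed even-dimensional case the last equation reads \(\rho^{(2)}(X) = -\rho^{(2)}(X)\), whence \(\rho^{(2)}(X) = 0\).

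For (\ref{item:propofcellularl2:fiberbundle}) the Euler characteristic formula is additivity of \(\dim_{\mathcal{N}(\Gamma)}\) along the alternating sum of the \(L^2\)-chain modules, using \(\dim_{\mathcal{N}(\Gamma)}\ell^2\Gamma = 1\), which identifies \(\sum_p (-1)^p b_p^{(2)}(\widetilde{X})\) with the alternating sum of the numbers of cells of \(X\). For the bundle formula I would pull \(F \to E \to B\) back to the universal cover of \(B\), build \(\widetilde{E}\) up over the cells of \(B\) with fibre \(\widetilde{F}\) --- here the hypothesis that \(\pi_1(F) \to \pi_1(E)\) is injective is exactly what makes \(\widetilde{F}\) the correct cover of the fibre --- and argue that attaching one cell of \(B\) multiplies the Fuglede--Kadison determinants of the fibrewise Laplacians, so that \(\det\)-\(L^2\)-acyclicity of \(\widetilde{F}\) propagates to \(\widetilde{E}\); summing the logarithmic contributions with the usual signs yields \(\rho^{(2)}(\widetilde{E}) = \chi(B)\cdot\rho^{(2)}(\widetilde{F})\). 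For (\ref{item:propofcellularl2:aspherical}), an infinite elementary amenable normal subgroup of \(\Gamma\) forces \(b_p^{(2)}(\widetilde{X}) = 0\) for all \(p\) by the Cheeger--Gromov vanishing theorem \cite{Cheeger-Gromov:L2-cohomology}; the estimate \(\alpha_p(\widetilde{X}) \geq 1\) for \(p \geq 1\) is L\"uck's bound for Novikov--Shubin invariants of spaces whose fundamental group carries an infinite normal amenable subgroup; and \(\rho^{(2)}(\widetilde{X}) = 0\) is the vanishing theorem for \(L^2\)-torsion in that situation, valid under the \(\det\geq 1\)-class hypothesis on \(\Gamma\).

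The main obstacle is not geometric but a matter of bookkeeping: one must check throughout that \(\Gamma\) --- and the fibre and base groups in (\ref{item:propofcellularl2:fiberbundle}), and the relevant subquotients in (\ref{item:propofcellularl2:aspherical}) --- lies in \(\mathcal{G}\), respectively has \(\det\geq 1\)-class, so that ``determinants are multiplicative'', ``homotopy equivalences have trivial determinant'' and the determinant-class inequalities are genuinely available, and that \(\det\)-\(L^2\)-acyclicity survives each reduction step. Once these hypotheses are tracked correctly, every item reduces to the corresponding result in \cite{Lueck:L2-Invariants}.
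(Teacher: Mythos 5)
Your proposal is correct and matches the paper's treatment: the paper proves this theorem simply by citing the relevant results in L\"uck's monograph (Theorems 1.35, 2.55, 3.93, Corollary 3.103, Theorem 3.113, Lemma 13.6) together with Wegner's vanishing theorem for the \(L^2\)-torsion statement in part (iv), which is exactly the assembly-and-hypothesis-checking you describe. Your added sketches of how those cited results are proved are accurate but not required by the paper's argument.
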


The proofs are given in \cite{Lueck:L2-Invariants}*{Theorem~1.35, p.\,37, Theorem~2.55 p.\,97, Theorem~3.93, p.\,161, Corollary~3.103, p.\,166, Theorem~3.113, p.\,172, Lemma~13.6, p.\,456}.  The assertion \(\rho^{(2)}(\widetilde{X}) = 0\) in (\ref{item:propofcellularl2:aspherical}) is due to C.\,Wegner \cite{Wegner:L2AsphericalMuenster} who has recently given a slight generalization in \cite{Wegner:L2AsphericalManuscripta}.  We list three more facts that will be of particular importance for our later applications.

\begin{lemma} \label{lemma:novikovshubinmanifoldwithboundary}
Let the \(\Gamma\)-CW-pair \((X, \partial X)\) be an equivariant triangulation of a free proper cocompact orientable \(L^2\)-acyclic \(\Gamma\)-manifold.  Then for each \(p \geq 1\)
\[ \textstyle \frac{1}{2} \min \{\alpha_p(X), \alpha_{n-p}(X) \} \leq \alpha_p(\partial X). \]
\end{lemma}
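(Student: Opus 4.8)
The plan is to deduce the bound from Poincar\'e--Lefschetz duality together with the behaviour of Novikov--Shubin invariants under the long exact sequence of the pair $(X,\partial X)$. Since the underlying $\Gamma$-manifold is $L^2$-acyclic, Poincar\'e--Lefschetz duality \fullref{thm:propofcellularl2}\,(\ref{item:propofcellularl2:poincare}) shows that $(X,\partial X)$ is $L^2$-acyclic as well and gives $\alpha_p(X) = \alpha_{n+1-p}(X,\partial X)$ for all $p$. Feeding the vanishing of $b^{(2)}_*(X)$ and $b^{(2)}_*(X,\partial X)$ into the long weakly exact homology sequence of the cellular short exact sequence
\[ 0 \longrightarrow C^{(2)}_*(\partial X) \xrightarrow{\ i\ } C^{(2)}_*(X) \xrightarrow{\ j\ } C^{(2)}_*(X,\partial X) \longrightarrow 0 \]
forces $b^{(2)}_*(\partial X) = 0$, so $\partial X$ is $L^2$-acyclic too. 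Substituting $q = n-p$ into the duality isomorphism yields $\alpha_{n-p}(X) = \alpha_{p+1}(X,\partial X)$, so it remains to prove $\alpha_p(\partial X) \geq \tfrac12 \min\{\alpha_p(X),\,\alpha_{p+1}(X,\partial X)\}$.

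The heart of the matter is a zig-zag argument along the above sequence. In each degree the sequence is split, because the $\Gamma$-cells of $\partial X$ form part of a cellular $\Z\Gamma$-basis of $X$; relative to the resulting orthogonal splitting $C^{(2)}_p(X) = C^{(2)}_p(\partial X)\oplus C^{(2)}_p(X,\partial X)$ the differential of $C^{(2)}_*(X)$ is block triangular, with diagonal blocks the differentials $c^{\partial X}_p$ and $c^{X,\partial X}_p$ and off-diagonal term a bounded operator $\beta_p\co C^{(2)}_p(X,\partial X)\to C^{(2)}_{p-1}(\partial X)$, and $(c^X)^2 = 0$ forces the commutation relation $c^{\partial X}_p\,\beta_{p+1} = -\beta_p\, c^{X,\partial X}_{p+1}$. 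Since all three complexes are $L^2$-acyclic, for each of them $\alpha_p$ is the Novikov--Shubin invariant of its reduced differential, the weak isomorphism obtained by restricting $c_p$ to the orthogonal complement of its kernel; write $G^{\partial X}_p$, $G^X_p$, $G^{X,\partial X}_{p+1}$ for the densely defined inverses of these weak isomorphisms.

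Now let $b$ lie in the image of $c^{\partial X}_p$. Then $i(b)$ is a cycle in $C^{(2)}_{p-1}(X)$ which actually lies in the image of $c^X_p$ (it equals $c^X_p\,i(a)$ whenever $c^{\partial X}_p a = b$), so $x' := G^X_p(i(b))$ is defined; the lower row of the block form gives $c^{X,\partial X}_p x'_2 = 0$ for the second component $x'_2$ of $x'$, and the commutation relation shows that $x := x'_1 - \beta_{p+1}\,G^{X,\partial X}_{p+1}(x'_2)$ is a $c^{\partial X}_p$-preimage of $b$ with $\|x\| \leq \|G^X_p(i(b))\| + \|\beta_{p+1}\|\cdot\|G^{X,\partial X}_{p+1}(x'_2)\|$ and $\|x'_2\|\leq\|G^X_p(i(b))\|$. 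As $i$ is an isometry and the projection onto the second summand has norm $\leq 1$, this exhibits the Green's operator $G^{\partial X}_p$ as dominated, up to composition with bounded operators, by the product $G^{X,\partial X}_{p+1}\circ G^X_p$. The composition estimate $\tfrac1{\alpha(v\circ u)}\leq\tfrac1{\alpha(u)}+\tfrac1{\alpha(v)}$ for Novikov--Shubin invariants of weak isomorphisms (see \cite{Lueck:L2-Invariants}) then gives
\[ \alpha_p(\partial X) \ \geq\ \Bigl(\tfrac1{\alpha_p(X)}+\tfrac1{\alpha_{p+1}(X,\partial X)}\Bigr)^{-1} \ \geq\ \tfrac12\min\{\alpha_p(X),\,\alpha_{p+1}(X,\partial X)\}, \]
which is the assertion.

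The step I expect to be the main obstacle is making precise the phrase ``dominated, up to bounded operators, by the product of Green's operators''. The inverses $G$ are only densely defined (indeed $x'_2$ above lies a priori only in the closure of the image of $c^{X,\partial X}_{p+1}$), so the estimate has to be carried out at the level of spectral density functions rather than as a naive operator-norm inequality, and one has to check that the non-isometric orthogonal projection onto the second summand and the bounded operator $\beta_{p+1}$ do not alter the Novikov--Shubin exponents. Once the composition lemma is set up in this form, the remaining work is routine bookkeeping with the block decomposition and the three instances of $L^2$-acyclicity.
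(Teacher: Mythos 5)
Your proposal is correct and takes essentially the same route as the paper: reduce via Poincar\'e--Lefschetz duality \(\alpha_{n-p}(X)=\alpha_{p+1}(X,\partial X)\) and then use the inequality \(\tfrac{1}{\alpha_p(\partial X)}\leq\tfrac{1}{\alpha_p(X)}+\tfrac{1}{\alpha_{p+1}(X,\partial X)}\) coming from the short exact sequence of \(L^2\)-chain complexes of the pair. The only difference is that the paper obtains this inequality by citing the last inequality of \cite{Lueck:L2-Invariants}*{Theorem~2.20, p.\,84} (which simplifies as stated because \(b^{(2)}_p(X)=0\)), whereas you sketch a by-hand derivation; the spectral-density bookkeeping you flag as the remaining obstacle is exactly what that cited theorem supplies, so no new idea is missing.
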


\begin{proof}
We apply the last inequality of \cite{Lueck:L2-Invariants}*{Theorem 2.20, p.\,84} to the short exact sequence of \(L^2\)-chain complexes of the pair \((X, \partial X)\).  Since \(b^{(2)}_p(X) = 0\), it reduces to
\[ \frac{1}{\alpha_p(\partial X)} \leq \frac{1}{\alpha_p(X)} + \frac{1}{\alpha_{p+1}(X, \partial X)}. \]
The lemma follows because \(\alpha_{p+1}(X, \partial X) = \alpha_{n-p}(X)\) by \fullref{thm:propofcellularl2}\,(\ref{item:propofcellularl2:poincare}).
\end{proof}

Note that the lemma yields \(\widetilde{\alpha}_q(X) \leq \alpha_q(\partial X)\) if \(\dim X = 2q+1\) or \(\dim X = 2q\).  In the latter case it gives in fact more precisely \(\alpha_q(X) \leq 2 \alpha_q(\partial X)\).  The next lemma is stated as Exercise 3.23 in \cite{Lueck:L2-Invariants}*{p.\,209}.

\begin{lemma} \label{lemma:l2torsionmanifoldwithboundary}
Let the \(\Gamma\)-CW-pair \((X, \partial X)\) be an equivariant triangulation of a free proper cocompact orientable \(\Gamma\)-manifold of even dimension.  Assume \(X\) is \(\det\)-\(L^2\)-acyclic.  Then so is \(\partial X\) and
\[ \textstyle \rho^{(2)}(X) = \frac{1}{2} \rho^{(2)}(\partial X). \]
\end{lemma}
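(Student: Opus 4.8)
The plan is to combine Poincar\'e duality for \(L^2\)-torsion, \fullref{thm:propofcellularl2}\,(\ref{item:propofcellularl2:poincare}), with the additivity of \(L^2\)-torsion for a short exact sequence of finite Hilbert \(\mathcal{N}(\Gamma)\)-chain complexes, as recorded in \cite{Lueck:L2-Invariants}*{Theorem~3.35}. Write \(n = \dim X\), which is even by hypothesis. First I would invoke Poincar\'e duality: since \(X\) is \(\det\)-\(L^2\)-acyclic, \fullref{thm:propofcellularl2}\,(\ref{item:propofcellularl2:poincare}) already gives that \((X, \partial X)\) is \(\det\)-\(L^2\)-acyclic and that \(\rho^{(2)}(X) = (-1)^{n+1}\rho^{(2)}(X, \partial X) = -\rho^{(2)}(X, \partial X)\), using that \(n\) is even.

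Next I would feed the short exact sequence of cellular \(L^2\)-chain complexes
\[ 0 \longrightarrow C_*^{(2)}(\partial X) \longrightarrow C_*^{(2)}(X) \longrightarrow C_*^{(2)}(X, \partial X) \longrightarrow 0 \]
of the pair into the sum formula. Two of its three entries, namely \(C_*^{(2)}(X)\) and \(C_*^{(2)}(X, \partial X)\), are \(\det\)-\(L^2\)-acyclic by the previous step, so \cite{Lueck:L2-Invariants}*{Theorem~3.35} applies and produces two things at once: first, the remaining complex \(C_*^{(2)}(\partial X)\) is \(\det\)-\(L^2\)-acyclic as well, which is the first assertion of the lemma; and second, the torsion identity
\[ \rho^{(2)}(X) = \rho^{(2)}(\partial X) + \rho^{(2)}(X, \partial X) + \rho^{(2)}(\mathcal{LHS}_*), \]
where \(\mathcal{LHS}_*\) denotes the long weakly exact \(L^2\)-homology sequence of the pair. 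Since all three chain complexes are \(L^2\)-acyclic, \(\mathcal{LHS}_*\) consists of zero modules, so \(\rho^{(2)}(\mathcal{LHS}_*) = 0\) and we are left with \(\rho^{(2)}(X) = \rho^{(2)}(\partial X) + \rho^{(2)}(X, \partial X)\).

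Finally I would substitute \(\rho^{(2)}(X, \partial X) = -\rho^{(2)}(X)\) from the first step into this identity to obtain \(\rho^{(2)}(X) = \rho^{(2)}(\partial X) - \rho^{(2)}(X)\), that is, \(\rho^{(2)}(X) = \frac{1}{2}\rho^{(2)}(\partial X)\). The only point calling for attention is the passage of the determinant-class condition from \(X\) and \((X, \partial X)\) to \(\partial X\); this is precisely the ``two out of three'' clause built into the sum formula, and it is worth recalling that the formula presupposes the weakly exact homology sequence \(\mathcal{LHS}_*\) to be \(\det\)-\(L^2\)-acyclic too, which in our situation is automatic since \(\mathcal{LHS}_*\) is a chain complex of zero modules. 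I do not expect any genuine obstacle beyond correctly bookkeeping these hypotheses and the sign coming from \(n\) being even.
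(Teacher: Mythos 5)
Your argument is correct and is exactly the intended one: the paper does not prove this lemma but defers to L\"uck's Exercise 3.23, whose standard solution is precisely your combination of Poincar\'e duality, \(\rho^{(2)}(X) = (-1)^{n+1}\rho^{(2)}(X,\partial X) = -\rho^{(2)}(X,\partial X)\) for \(n\) even, with the sum formula applied to \(0 \to C^{(2)}_*(\partial X) \to C^{(2)}_*(X) \to C^{(2)}_*(X,\partial X) \to 0\), where the long weakly exact homology sequence is trivial.  The only point to cite with care is the determinant-class bookkeeping you yourself flag: the passage of \(\det\)-\(L^2\)-acyclicity from \(X\) and \((X,\partial X)\) to \(\partial X\) is not quite the verbatim hypothesis of the sum formula as stated in L\"uck's Theorem 3.35, but it does follow from the same machinery (for instance by dualizing the short exact sequence, which turns the subcomplex into the quotient, or from the two-out-of-three statement underlying L\"uck's Theorem 3.93(2), which the paper itself uses), so your proof is sound as written.
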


Finally, we recall that \(L^2\)-torsion has the same additivity property as the Euler characteristic \cite{Lueck:L2-Invariants}*{Theorem~3.93(2), p.\,161}.

\begin{lemma} \label{lemma:l2torsionpushouts}
Consider the pushout of finite free \(\Gamma\)-CW complexes
\[
\begin{xy}
\xymatrix{
X_0 \ar[r]^{j_2} \ar[d]_{j_1} & X_2 \ar[d]\\
X_1 \ar[r] & X
}
\end{xy}
\]
where \(j_1\) is an inclusion of a \(\Gamma\)-subcomplex, \(j_2\) is cellular and \(X\) carries the induced \(\Gamma\)-CW-structure.  Assume that \(X_i\) is \(\det\)-\(L^2\)-acyclic for \(i = 0, 1, 2\).  Then so is \(X\) and
\[ \rho^{(2)}(X) = \rho^{(2)}(X_1) + \rho^{(2)}(X_2) - \rho^{(2)}(X_0). \]
\end{lemma}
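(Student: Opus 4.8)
The plan is to turn the pushout into a Mayer--Vietoris short exact sequence of $L^2$-chain complexes and then apply the sum formula for $L^2$-torsion of short exact sequences of finite Hilbert $\mathcal{N}(\Gamma)$-chain complexes, in complete analogy with the proof that the Euler characteristic satisfies inclusion--exclusion.

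First I would work at the chain level. Since $j_1$ is the inclusion of a $\Gamma$-subcomplex, $j_2$ is cellular, and $X$ carries the induced $\Gamma$-CW structure, the inclusion $X_1 \hookrightarrow X$ is again a subcomplex inclusion and excision gives $C_*(X, X_1) \cong C_*(X_2, X_0)$; splicing this with the two relative chain sequences of the pairs $(X, X_1)$ and $(X_2, X_0)$ produces a short exact sequence of finite free $\Z\Gamma$-chain complexes
\[ 0 \longrightarrow C_*(X_0) \longrightarrow C_*(X_1) \oplus C_*(X_2) \longrightarrow C_*(X) \longrightarrow 0. \]
In each degree this is split as a sequence of $\Z\Gamma$-modules because the quotient $C_p(X)$ is free, so applying $\ell^2\Gamma \otimes_{\Z\Gamma} (-)$ yields a short exact sequence of finite Hilbert $\mathcal{N}(\Gamma)$-chain complexes
\[ 0 \longrightarrow C_*^{(2)}(X_0) \longrightarrow C_*^{(2)}(X_1) \oplus C_*^{(2)}(X_2) \longrightarrow C_*^{(2)}(X) \longrightarrow 0. \]
Since $L^2$-torsion is additive over direct sums of finite Hilbert chain complexes, the middle term is $\det$-$L^2$-acyclic with torsion $\rho^{(2)}(X_1) + \rho^{(2)}(X_2)$.

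Next I would invoke the chain-level sum formula underlying \cite{Lueck:L2-Invariants}*{Theorem~3.93(2)}: for a short exact sequence $0 \to A_* \to B_* \to C_* \to 0$ of finite Hilbert $\mathcal{N}(\Gamma)$-chain complexes of which any two are $\det$-$L^2$-acyclic, the third one is $\det$-$L^2$-acyclic as well and $\rho^{(2)}(B_*) = \rho^{(2)}(A_*) + \rho^{(2)}(C_*)$, the term coming from the weakly exact long $L^2$-homology sequence dropping out because that sequence is trivial here. Applied with $A_* = C_*^{(2)}(X_0)$ and $B_* = C_*^{(2)}(X_1) \oplus C_*^{(2)}(X_2)$, this shows that $C_*^{(2)}(X)$, hence $X$, is $\det$-$L^2$-acyclic and that $\rho^{(2)}(X_1) + \rho^{(2)}(X_2) = \rho^{(2)}(X_0) + \rho^{(2)}(X)$; rearranging gives the claim.

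The main obstacle is not any single computation but checking the hypotheses of the sum formula: one must know that the determinant-class condition propagates from $A_*$ and $B_*$ to the third term $C_*$, and that the Fuglede--Kadison determinants attached to the long $L^2$-homology sequence contribute nothing once $A_*$ and $B_*$ are $\det$-$L^2$-acyclic. Both are part of Lück's machinery, so in practice one simply cites \cite{Lueck:L2-Invariants}*{Theorem~3.93(2)}; the only thing to verify by hand is that cellular $\Z\Gamma$-bases can be chosen compatibly along the Mayer--Vietoris sequence, which is immediate from the cell structure of the pushout.
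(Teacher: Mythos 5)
The paper offers no proof of this lemma at all---it is recalled verbatim from L\"uck's book as Theorem~3.93(2)---and your argument is precisely the standard proof of that result: the Mayer--Vietoris short exact sequence of cellular $L^2$-chain complexes combined with additivity of $L^2$-torsion for short exact sequences of finite Hilbert $\mathcal{N}(\Gamma)$-chain complexes (L\"uck, Theorem~3.35), the long weakly exact $L^2$-homology sequence being trivial here; so the proposal is correct and essentially coincides with the argument behind the paper's citation. The only step worth phrasing more carefully is your final remark on compatible bases: since $j_2$ is merely cellular, the degreewise sequences are based exact only after a change of basis in the middle term by a block-triangular matrix with identity diagonal blocks, whose Fuglede--Kadison determinant is $1$, which is exactly why the correction terms in the additivity theorem vanish.
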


\(L^2\)-invariants, being homotopy invariants by \fullref{thm:propofcellularl2}\,(\ref{item:propofcellularl2:homotopy}), yield invariants for groups whose classifying spaces have a finite CW-model \(B\Gamma\).   For this purpose we set \(b_p^{(2)}(\Gamma) = b_p^{(2)}(E\Gamma; \mathcal{N}(\Gamma))\) as well as \(\alpha_p(\Gamma) = \alpha_p(E\Gamma; \mathcal{N}(\Gamma))\).  We say that \(\Gamma\) is \emph{\(\det\)-\(L^2\)-acyclic} if \(E\Gamma\) is, and set \(\rho^{(2)}(\Gamma) = \rho^{(2)}(E\Gamma; \mathcal{N}(\Gamma))\) in that case.  In fact, \(L^2\)-Betti numbers have been generalized to arbitrary \(\Gamma\)-spaces and thus to arbitrary groups \citelist{\cite{Cheeger-Gromov:L2-cohomology} \cite{Lueck:ArbitraryModules}}.  Novikov--Shubin invariants can likewise be defined for general groups \cite{Lueck-Reich-Schick:ArbitraryNovikovShubin}.  So we shall allow ourselves to talk about \(b^{(2)}_p(\Gamma)\), \(\alpha_p(\Gamma)\) and \(\widetilde{\alpha}_p(\Gamma)\) for any countable discrete group \(\Gamma\).  Only for the \(L^2\)-torsion such a generalization has not (yet) been given.

If \(M\) is a cocompact free proper Riemannian \(\Gamma\)-manifold without boundary, there is a parallel theory of \emph{analytic \(L^2\)-invariants} of \(M\), exploiting the analytic Laplacian \(\Delta_p^a\) acting on square integrable \(p\)-forms on \(M\) \cite{Lueck:L2-Invariants}*{Sections 1.3, 2.3, 3.5}.  Since \(\Delta_p^a\) is an unbounded operator, some more technical effort is necessary in particular to handle analytic \(L^2\)-torsion.  If \(M\) comes equipped with a finite equivariant triangulation, then cellular and analytic \(L^2\)-invariants agree.  The result is due to J.\,Dodziuk for the \(L^2\)-Betti numbers \cite{Dodziuk:DeRhamHodge}, to A.\,V.\,Efremov for the Novikov--Shubin invariants \cite{Efremov:Novikov} and lastly to D.\,Burghelea, L.\,Friedlander, T.\,Kappeler and P.\,McDonald for the \(L^2\)-torsion \cite{Burghelea-et-al:Torsion}.  This bridge between topological and analytic methods makes \(L^2\)-invariants powerful.  On the one hand from the analytic definition it is not at all obvious that \(L^2\)-invariants are homotopy invariants.  On the other hand the analytic approach can give access to computations if the Riemannian structure is particularly nice.  This definitely applies to the case of a \emph{symmetric space of noncompact type}, \(M = G/K\) for a connected semisimple Lie group \(G\) with maximal compact subgroup \(K\).  Then \(M\) is a finite \(E\Gamma\) for every torsion-free uniform lattice \(\Gamma \subset G\).  Recall that the \emph{deficiency} of \(G\) is given by \(\delta(G) = \rank_\C(G) - \rank_\C(K)\).

\begin{theorem}[\(L^2\)-invariants of uniform lattices] \label{thm:l2symmetricspaces}
Let \(\Gamma \subset G\) be a uniform lattice and set \(m = \delta(G)\) and \(n = \dim (M)\).
\begin{enumerate}[(i)]
\item \label{item:l2symmetricspaces:l2betti} We have \(b^{(2)}_p(\Gamma) \neq 0\) if and only if \(m = 0\) and \(n = 2p\).
\item \label{item:l2symmetricspaces:novikov} We have \(\alpha_p(\Gamma) = \infty^+\) unless \(m > 0\) and \(p \in [\frac{n-m}{2} + 1, \frac{n+m}{2}]\) in which case \(\textstyle \alpha_p(\Gamma) = m\).
\item \label{item:l2symmetricspaces:l2torsion} Assume that \(\Gamma\) is torsion-free.  We have \(\rho^{(2)}(\Gamma) \neq 0\) if and only if \(m = 1\).
\end{enumerate}
\end{theorem}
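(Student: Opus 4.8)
The plan is to abandon the cellular picture and compute everything analytically on $M=G/K$ by means of Harish-Chandra's Plancherel formula. We may assume $M$ is of noncompact type, as compact factors of $G$ affect neither $M$ nor $m=\delta(G)$ nor the statement. Fixing a $\Gamma$-equivariant smooth triangulation of the closed manifold $\Gamma\backslash M$, the space $M$ becomes a finite free $\Gamma$-CW complex whose cellular $L^2$-invariants agree with the analytic ones attached to the de Rham Laplacians $\Delta_p$ on $L^2\Omega^p(M)$, by the comparison theorems of Dodziuk, Efremov and Burghelea--Friedlander--Kappeler--McDonald recalled above. Under the unitary $G$-isomorphism $L^2\Omega^p(M)\cong(L^2(G)\otimes\Lambda^p\fr{p}^*)^K$, Kuga's lemma says $\Delta_p$ acts on the $\pi$-isotypic piece $\mathrm{Hom}_K(\pi,\Lambda^p\fr{p}^*)$ by a scalar $\lambda_p(\pi)\ge 0$ depending only on the Casimir eigenvalue of $\pi$; combined with Atiyah's $L^2$-index theorem, the spectral density functions therefore factor as
\[F_p(\lambda)=\mathrm{vol}(\Gamma\backslash G)\cdot\int_{\{\pi\,:\,\lambda_p(\pi)\le\lambda\}}\dim\mathrm{Hom}_K(\pi,\Lambda^p\fr{p}^*)\,d\mu_{\mathrm{Pl}}(\pi),\]
where $\pi$ runs over the tempered dual of $G$ and $\mu_{\mathrm{Pl}}$ is the Plancherel measure. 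Thus all three invariants are, up to the covolume, universal constants of $G$, and the theorem becomes a statement about $(\fr{g},K)$-cohomology of tempered representations near the trivial infinitesimal character.

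For (i): $b_p^{(2)}(\Gamma)=F_p(0)$ is positive exactly when some $\pi$ in the Plancherel support has $\lambda_p(\pi)=0$, which by Kuga's lemma forces $\pi$ to have the infinitesimal character of the trivial representation and satisfy $H^p(\fr{g},K;\pi)\ne 0$; by Hodge theory such $\pi$ lie in the discrete series of $G$. By Harish-Chandra the discrete series is nonempty iff $\rank_\C(G)=\rank_\C(K)$, that is $m=0$; and by Borel and Wallach the discrete series with trivial infinitesimal character carry cohomology only in the middle degree $n/2$---an integer precisely because equal rank makes $\dim_\R\fr{p}$ even---contributing there with positive formal degree. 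This gives (i); and since $\mathcal{H}^p_{(2)}(M)$ is always a finite sum of discrete series, it vanishes in every degree once $m>0$.

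For (ii) one studies $F_p(\lambda)-F_p(0)$ as $\lambda\to 0^+$. The part of the continuous Plancherel measure producing arbitrarily small eigenvalues sits on the representations $\pi_\nu$ unitarily induced from the discrete series of the Levi factor $M_0$ of the \emph{fundamental} parabolic $P_0=M_0A_0N_0$, whose split part has dimension $\dim A_0=\rank_\C(G)-\rank_\C(K)=m$; for the induction parameter $\nu\in\fr{a}_0^*\cong\R^m$, Kuga's lemma gives $\lambda_p(\pi_\nu)=\|\nu\|^2+c_p$ with $c_p\ge 0$, and $c_p=0$ exactly in the $m+1$ degrees $p\in[\frac{n-m}{2},\frac{n+m}{2}]$ carrying the $(\fr{g},K)$-cohomology of $\pi_0$ (an exterior algebra on $\fr{a}_0^*$ centered in degree $n/2$). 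The crux is that the Plancherel density of these $\pi_\nu$ is real-analytic in $\nu$ and does \emph{not} vanish at $\nu=0$; as $d\mu_{\mathrm{Pl}}$ is then comparable to $m$-dimensional Lebesgue measure near the origin, $F_p(\lambda)-F_p(0)$ vanishes to order exactly $m/2$ in those degrees and has a genuine spectral gap otherwise. Passing from $\Delta_p$ to the individual differentials via $\widetilde{\alpha}_p=\frac12\min\{\alpha_p,\alpha_{p+1}\}$ together with the duality $\alpha_p=\alpha_{n+1-p}$ collapses this $(m+1)$-term interval to the $m$-term interval $[\frac{n-m}{2}+1,\frac{n+m}{2}]$ on which $\alpha_p(\Gamma)=m$, with $\alpha_p(\Gamma)=\infty^+$ off it.

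For (iii): when $m=0$, part (i) shows $M$ is not $\det$-$L^2$-acyclic; for $m\ge 1$ the reduced $L^2$-cohomology vanishes by (i) and the determinant-class condition holds because lattices in Lie groups, being residually finite, are of determinant class, so $\rho^{(2)}(\Gamma)$ is defined and equals $\mathrm{vol}(\Gamma\backslash G)$ times a universal constant $t(G)$ read off from the same Plancherel data. For $m\ge 2$, Lott's explicit evaluation of the defining alternating integral $\sum_p(-1)^p\,p\int_{0^+}^\infty\log\lambda\,dF_p(\lambda)$ over $\fr{a}_0^*\cong\R^m$ yields $t(G)=0$---the $W(\fr{a}_0)$-symmetry of the Plancherel density leaving enough room in the $m$-dimensional parameter for the degree-weighted sum to cancel---and for even $m$ this vanishing also follows at once from Poincar\'e duality \fullref{thm:propofcellularl2}\,(\ref{item:propofcellularl2:poincare}), since $\Gamma\backslash M$ is then a closed even-dimensional manifold (note $\dim M\equiv m\bmod 2$). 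For $m=1$, the parameter space is a single line, no cancellation occurs, and Hess--Schick and L\"uck--Schick computed $t(G)$ to be a nonzero number (for instance $-\frac{1}{6\pi}$ when $G=\textup{SO}^0(3,1)$), which settles (iii). The main obstacle is precisely this Plancherel bookkeeping behind (ii) and (iii)---the scalars $c_p$, the non-vanishing of the Plancherel density at $\nu=0$, and the cancellation forcing $t(G)=0$ for $m\ge 2$---which rests on Harish-Chandra's theory and the Vogan--Zuckerman description of cohomological unitary representations.
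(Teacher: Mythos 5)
This theorem is not proved in the paper: it is quoted with attributions, (i) to Borel and (ii)--(iii) to Olbrich (building on Lott and Hess--Schick), and the paper itself only records that the methods are \((\fr{g},K)\)-cohomology and the Harish-Chandra--Plancherel theorem --- which is exactly the route your sketch follows, so your proposal coincides in approach with the proof the paper points to, and its outline (Kuga's lemma, reduction to the fundamental parabolic series, Plancherel density at \(\nu=0\), parity/Poincar\'e duality for even \(m\)) is sound. Be aware, though, that the genuinely hard inputs are asserted rather than established, and the attributions there are slightly off: the vanishing of the torsion constant for all \(m\ge 2\) (including odd \(m\ge 3\)) and its non-vanishing for \emph{every} deficiency-one group (e.g.\ \(\textup{SL}(3,\R)\), not just the hyperbolic groups treated by Hess--Schick and L\"uck--Schick), as well as the precise non-vanishing of the fundamental-series Plancherel density at \(\nu=0\), are Olbrich's contributions and would have to be carried out (or cited) to make the sketch a complete proof.
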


Part (\ref{item:l2symmetricspaces:l2betti}) can already be found in \cite{Borel:L2-cohomology}.  Parts (\ref{item:l2symmetricspaces:novikov}) and (\ref{item:l2symmetricspaces:l2torsion}) are due to M.\,Olbrich \cite{Olbrich:L2-InvariantsLocSym} generalizing previous work of J.\,Lott \cite{Lott:HeatKernels} and E.\,Hess--T.\,Schick \cite{Hess-Schick:L2Hyperbolic}.  Methods involve \((\fr{g}, K)\)-cohomology as well as the Harish-Chandra--Plancherel Theorem.  Formulas for the nonzero values of \(L^2\)-Betti numbers and \(L^2\)-torsion involving the geometry of the compact dual of \(M\) are also given in \cite{Olbrich:L2-InvariantsLocSym}.  We note that \(n-m\) (thus \(n+m\)) is always even and positive.

For \(L^2\)-Betti numbers nothing new happens in the case of a nonuniform lattice \(\Gamma \subset G\).

\begin{theorem} \label{thm:l2bettiofgenerallattices}
Let \(\Gamma \subset G\) be any lattice and set \(m = \delta(G)\) and \(n = \dim (M)\).  We have \(b^{(2)}_p(\Gamma) \neq 0\) if and only if \(m = 0\) and \(n = 2p\).
\end{theorem}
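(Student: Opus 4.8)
The plan is to reduce the claim to the analytic computation of the reduced $L^2$-cohomology of the symmetric space $X = G/K$ that underlies the uniform case \fullref{thm:l2symmetricspaces}\,(\ref{item:l2symmetricspaces:l2betti}), combined with the extension of $L^2$-Betti numbers to the non-cocompact, finite-volume setting due to Cheeger--Gromov \cite{Cheeger-Gromov:BoundsVonNeumann}. First I would arrange that $\Gamma$ is torsion-free: by Selberg's Lemma \cite{Alperin:SelbergsLemma} there is a torsion-free subgroup $\Gamma' \subset \Gamma$ of finite index, and since $L^2$-Betti numbers of groups are multiplicative under finite-index inclusions \cite{Lueck:L2-Invariants}, one has $b^{(2)}_p(\Gamma) = \frac{1}{[\Gamma:\Gamma']}\,b^{(2)}_p(\Gamma')$, so that $b^{(2)}_p(\Gamma) \neq 0$ if and only if $b^{(2)}_p(\Gamma') \neq 0$. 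For torsion-free $\Gamma$ the symmetric space $X$ is a free proper $\Gamma$-manifold which models $E\Gamma$, whence $b^{(2)}_p(\Gamma) = b^{(2)}_p(X; \mathcal{N}(\Gamma))$ in the sense of \cite{Cheeger-Gromov:L2-cohomology}.

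Next I would pass to the analytic side. The locally symmetric space $\Gamma \backslash X$ is a complete Riemannian manifold of finite volume whose curvature is bounded, being locally isometric to $X$; this is precisely the bounded-geometry, finite-volume situation in which Cheeger--Gromov identify $b^{(2)}_p(X; \mathcal{N}(\Gamma))$ with the $\Gamma$-von Neumann dimension of the space $\mathcal{H}^p_{(2)}(X)$ of $L^2$-harmonic $p$-forms on $X$, via a long-time limit of the heat kernel $\operatorname{tr} e^{-t\Delta_p^a}(x,x)$ integrated over a $\Gamma$-fundamental domain. Since $X$ is homogeneous, this integrand is independent of $x$, so the von Neumann dimension equals $\operatorname{vol}(\Gamma \backslash X)$ times a constant that depends only on $G$ and $p$; thus the non-cocompactness of the $\Gamma$-action changes only the value of the (finite) volume factor and not the vanishing behaviour. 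Finally, by Borel's determination of the reduced $L^2$-cohomology of Riemannian symmetric spaces \cite{Borel:L2-cohomology} --- the same input that yields \fullref{thm:l2symmetricspaces}\,(\ref{item:l2symmetricspaces:l2betti}) in the uniform case --- the space $\mathcal{H}^p_{(2)}(X)$ is nonzero precisely when $G$ has discrete series representations, i.e.\ when $\delta(G) = 0$, and then only in the middle degree $p = \dim(X)/2$ (here $\dim(X) = n$ is even, since $n - m$ is always even). Combining these statements gives $b^{(2)}_p(\Gamma) \neq 0$ if and only if $m = 0$ and $n = 2p$.

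The step I expect to be the main obstacle is the identification, in the second paragraph, of the group invariant $b^{(2)}_p(\Gamma) = b^{(2)}_p(X; \mathcal{N}(\Gamma))$ with the $\Gamma$-von Neumann dimension of $\mathcal{H}^p_{(2)}(X)$ when $\Gamma \backslash X$ is noncompact. For uniform lattices this is Atiyah's $L^2$-index theorem together with Dodziuk's $L^2$-de Rham theorem, but in the finite-volume case one must invoke Cheeger--Gromov's heat-kernel analysis of open manifolds of bounded geometry, which is exactly what makes $b^{(2)}_p(X; \mathcal{N}(\Gamma))$ finite and the heat-kernel formula valid there. Once this is granted, homogeneity of $X$ reduces everything to the uniform computation up to the harmless factor $\operatorname{vol}(\Gamma \backslash X)$.
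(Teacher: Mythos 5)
Your proposal is correct and follows essentially the same route as the paper, which simply cites Cheeger--Gromov \cite{Cheeger-Gromov:BoundsVonNeumann} (compact exhaustions/heat-kernel analysis of finite-volume, bounded-geometry locally symmetric spaces, with homogeneity giving proportionality to the covolume) together with the \(L^2\)-cohomology computation of the symmetric space underlying the uniform case. The paper additionally mentions an alternative, more conceptual argument via measure equivalence with a uniform lattice and Gaboriau's proportionality theorem, but your Cheeger--Gromov reduction is exactly its primary reference.
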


This is already contained in the work of Cheeger--Gromov \cite{Cheeger-Gromov:BoundsVonNeumann} who consider compact exhaustions of certain finite-volume manifolds.  A more conceptual line of reasoning uses that \(G\) possesses uniform lattices which are all measure equivalent to \(\Gamma\).  Hence the result follows from a proportionality theorem of D.\,Gaboriau \cite{Gaboriau:Invariantsl2}*{Th\'eor\`eme 6.3, p.\,95}.

\section{$\textit{L}^{\text{2}}$-invariants of the Borel-Serre bordification}
\label{sec:l2ofborelserre}

Let us recall that G.\,Margulis showed that taking integer points of algebraic \(\Q\)-groups is essentially the only way to produce lattices in higher rank Lie groups.  A lattice \(\Gamma\) in a connected semisimple Lie group \(G\) without compact factors is called \emph{reducible} if \(G\) admits infinite connected normal subgroups \(H\) and \(H'\) such that \(G = HH'\), such that \(H \cap H'\) is discrete and such that \(\Gamma / (\Gamma \cap H)(\Gamma \cap H')\) is finite. Otherwise \(\Gamma\) is called \emph{irreducible}.  Two groups are called \emph{abstractly commensurable} if they have isomorphic subgroups of finite index.

\begin{theorem}[Margulis arithmeticity] \label{thm:margulisarithmeticity}
Let \(G\) be a connected semisimple linear Lie group of \(\rank_\R(G) > 1\) without compact factors.  Let \(\Gamma \subset G\) be an irreducible lattice.  Then there is a connected semisimple linear \(\Q\)-group \(\Hh\) such that \(\Gamma\) and \(\Hh(\Z)\) are abstractly commensurable and such that \(G\) and \(\Hh(\R)\) define isometric symmetric spaces.
\end{theorem}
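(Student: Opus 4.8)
The plan is to derive the theorem from G.\,Margulis' superrigidity theorem by the classical descent argument leading from superrigidity to arithmeticity. Since \(G\) is connected semisimple linear with no compact factors and \(\rank_\R(G) > 1\), and \(\Gamma \subset G\) is an irreducible lattice, superrigidity is available in the form we need: every finite-dimensional linear representation of \(\Gamma\) over a local field of characteristic zero either has relatively compact image or, after restriction to a finite-index subgroup, extends to a continuous homomorphism defined on \(G\). First I would feed this into the adjoint representation \(\Ad \co \Gamma \to \textup{GL}(\fr{g})\), noting that \(\Ad(\Gamma)\) is Zariski dense in the adjoint group \(\Ad(G)\) by the Borel density theorem and that the symmetric space attached to \(G\) depends only on this adjoint form.

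Next I would pin down a field of definition. Let \(E \subset \C\) be the subfield generated by the traces \(\textup{tr}\,\Ad(\gamma)\), \(\gamma \in \Gamma\); since a higher-rank lattice is finitely generated, \(E\) is finitely generated over \(\Q\). If \(E\) had positive transcendence degree, some nonarchimedean absolute value would be unbounded on these traces; but applying superrigidity to \(\Ad\) over each nonarchimedean completion shows the image is relatively compact there, because it cannot extend to a homomorphism on the real Lie group \(G\), so the traces stay bounded. Hence \(E\) is algebraic over \(\Q\), i.e.\ a number field. A Galois-cocycle argument using Zariski density then yields, after passing to a finite-index subgroup \(\Gamma_0 \subset \Gamma\), a semisimple linear algebraic group \(\Hh'\) over \(E\) with \(\Ad(\Gamma_0) \subset \Hh'(E)\), and boundedness of the nonarchimedean images refines this to \(\Ad(\Gamma_0) \subset \Hh'(\mathcal{O}_{E,S})\) for a finite set of places \(S\).

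Then I would apply restriction of scalars. Put \(\Hh = \textup{Res}_{E/\Q}\Hh'\), a semisimple linear \(\Q\)-group with \(\Hh(\R) = \prod_{v \mid \infty} \Hh'(E_v)\) and with \(\Hh(\Z)\) commensurable with \(\Hh'(\mathcal{O}_E)\); then \(\Ad\) exhibits \(\Gamma_0\), hence \(\Gamma\), as abstractly commensurable with \(\Hh(\Z)\). It remains to compare symmetric spaces. At the archimedean place \(v_0\) underlying the original inclusion \(\Gamma \subset G\), the extension of \(\Ad\) supplied by superrigidity matches the symmetric space of \(\Hh'(E_{v_0})\) with that of \(G\), using that \(G\) has no compact factors. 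At every other archimedean place \(v\), superrigidity forces \(\Hh'(E_v)\) to be compact: otherwise its noncompact part would, by superrigidity, be a quotient of \(G\), and, after decomposing \(\Hh'\) into \(E\)-simple factors and invoking irreducibility of \(\Gamma\), this would make \(\Gamma\) an irreducible lattice in a product with strictly more noncompact factors than \(G\) has, a contradiction. Since compact factors contribute only a point to the associated symmetric space, \(\Hh(\R)\) and \(G\) define isometric symmetric spaces, as required.

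The hard part will be the descent and the place-by-place bookkeeping just sketched: keeping track of finite-index subgroups, isogenies, and centers, and in particular proving that exactly one archimedean factor of \(\Hh\) is noncompact while all the others are compact. This is where the hypotheses that \(\Gamma\) is irreducible and that \(G\) has no compact factors are genuinely used, and it is the technical core of the implication ``superrigidity \(\Rightarrow\) arithmeticity''. Of course, if superrigidity itself may not be cited, then proving it --- through boundary maps on \(G/P\) and ergodic-theoretic or harmonic-map techniques --- is the real work, and by far the most substantial step.
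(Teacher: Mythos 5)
Your outline is the classical superrigidity-to-arithmeticity argument and, as a sketch, it is sound: nonarchimedean superrigidity forces the traces of \(\Ad(\gamma)\) to be algebraic integers (the embedding lemma for finitely generated fields of characteristic zero handles transcendence and denominators in one stroke, exactly as you use it), Zariski density of \(\Ad(\Gamma)\) gives a form \(\Hh'\) of the adjoint group over the trace field, restriction of scalars produces \(\Hh\), and archimedean superrigidity together with irreducibility forces compactness at every infinite place except the one carrying \(G\). But this is a genuinely different route from the paper, which does not reprove arithmeticity at all: it quotes Margulis' theorem in its standard formulation \cite{Margulis:Arithmeticity}*{Theorem~1, p.\,97} and only records, with details deferred to \cite{Kammeyer:L2-invariants}*{Corollary 4.4, p.\,33}, the formal translation into the version stated here. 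That translation is the paper's only ``proof content'': the standard statement concerns \(\Ad(\Gamma)\) and an epimorphism \(\Hh(\R)^0 \rightarrow \Ad(G)\) with compact kernel, so one passes to a torsion-free finite-index subgroup (Selberg's Lemma) to identify \(\Gamma\) up to commensurability with its image under \(\Ad\) --- the same point your phrase ``\(\Ad\) exhibits \(\Gamma_0\), hence \(\Gamma\), as abstractly commensurable with \(\Hh(\Z)\)'' silently uses, since \(\Ad\) may have finite kernel on \(\Gamma\) --- and one observes that finite centers and compact factors do not change the associated symmetric space. Your approach buys a self-contained derivation modulo superrigidity, at the cost of redoing the descent whose place-by-place bookkeeping you rightly identify as the technical core; the paper's approach is the appropriate one in context, since the theorem enters only as a quoted ingredient for Theorem 1.2, not as something to be proved.
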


The standard formulation of Margulis arithmeticity is slightly different \cite{Margulis:Arithmeticity}*{Theorem~1, p.\,97}; see \cite{Kammeyer:L2-invariants}*{Corollary 4.4, p.\,33} for the conclusion of our version.  W.\,L\"uck, H.\,Reich and T.\,Schick have shown in \cite{Lueck-Reich-Schick:ArbitraryNovikovShubin}*{Theorem~3.7.1} that abstractly commensurable groups have equal Novikov--Shubin invariants.  Therefore all irreducible lattices in higher rank semisimple Lie groups are covered when we work for the moment with arithmetic subgroups of connected semisimple linear algebraic \(\Q\)-groups.  Before we come to the proof of \fullref{thm:novikovshubinqrankone}, we need to recall the following definition for a compactly generated locally compact group \(H\) with compact generating set \(V \subset H\) and Haar measure \(\mu\) (compare \cite{Guivarch:Croissance}).

\begin{definition}
\label{def:polynomialgrowth}
The group \(H\) has \emph{polynomial growth of order \(d(H) \geq 0\)} if
\[ d(H) = \inf \left\{ k > 0 \co \limsup_{n \rightarrow \infty} \textstyle \frac{\mu(V^n)}{n^k} < \infty \right\}. \]
\end{definition}

This definition is independent of the choice of \(V\) and of rescaling \(\mu\) \cite{Guivarch:Croissance}*{p.\,336}.  If \(H\) is discrete and \(V\) is a finite symmetric generating set, we get back the familiar definition in terms of metric balls in the Cayley graph defined by word lengths.  Let us recall the result we want to prove.

\medskip
{\bf \fullref{thm:novikovshubinqrankone}}\qua
{\sl Let \(\Gg\) be a connected semisimple linear algebraic \(\Q\)-group.  Suppose that \(\rank_\Q(\Gg) = 1\) and \(\delta(\Gg(\R)) > 0\).  Let \(\Pp \subset \Gg\) be a proper rational parabolic subgroup.  Then for every arithmetic lattice \(\Gamma \subset \Gg(\Q)\)}
\[ \widetilde{\alpha}_q(\Gamma) \leq \delta(M_\Pp) + d(N_P). \]

Here \(q\) is the \emph{middle dimension} of \(X = \Gg(\R) / K\), so either \(\dim X = 2q + 1\) or \(\dim X = 2q\).  The deficiency of a reductive Lie group \(G'\) is defined as \(\delta(G') = \rank_\C (G') - \rank_\C (K')\) for a maximal compact subgroup \(K' \subset G'\) as in the case of semisimple groups.  The deficiency of \(G'\) is also known as the \emph{fundamental rank} \(\textup{f-rank}(X')\) of the associated symmetric space \(X' = G'/K'\).  Note that \(\Gg\) trivially satisfies conditions (\ref{cond:anisotropiccenter}) and (\ref{cond:centralizermeetscomponents}) of \fullref{sec:arithmeticsubgroups}.  Since \(\rank_\Q(\Gg) = 1\), all proper rational parabolic subgroups are conjugate under \(\Gg(\Q)\) so that the constant \(\delta(M_\Pp) + d(N_P)\) only depends on \(\Gg\).  One example of a group \(\Gg\) as in \fullref{thm:novikovshubinqrankone} is of course \(\Gg = \textup{SO}(2n+1,1; \C)\).  But the point of \fullref{thm:novikovshubinqrankone} is that no restriction is made on the real rank of \(\Gg\) and we will consider groups \(\Gg\) with higher real rank in \fullref{ex:quadraticform} after proving the theorem.  The proof will require an estimation of Novikov--Shubin invariants of the boundary components \(e(\Pp) = N_P \times X_\Pp\) of the Borel--Serre bordification \(\overline{X}\).  Since a product formula is available for Novikov--Shubin invariants, the calculation eventually reduces to \fullref{thm:l2symmetricspaces}\,(\ref{item:l2symmetricspaces:novikov}) and the following theorem due to M.\,Rumin \cite{Rumin:AroundHeat}*{Theorem~3.13, p.\,144}, see also \cite{Rumin:DiffGeoOnCC}*{Theorem~4, p.\,990}.  

\begin{theorem}[M.\,Rumin] \label{thm:rumin}
Let \(N\) be a simply connected nilpotent Lie group whose Lie algebra \(\fr{n}\) comes with a grading \(\fr{n} = \bigoplus_{k=1}^r \fr{n}_k\).  Assume that \(N\) possesses a uniform lattice \(\Gamma_N\).  Then for each \(p = 1, \ldots, \dim N\)
\[ 0 < \alpha_p(N; \mathcal{N}(\Gamma_N)) \leq \sum_{k=1}^r k \dim \fr{n}_k. \]
\end{theorem}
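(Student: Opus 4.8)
The plan is to pass to the analytic picture and then exploit the one‑parameter group of dilations supplied by the grading. Since \(\Gamma_N\backslash N\) is a closed aspherical manifold carrying a smooth equivariant triangulation, the cellular invariant \(\alpha_p(N;\mathcal{N}(\Gamma_N))\) agrees with the analytic Novikov--Shubin invariant of the de Rham differential \(d_{p-1}\) on \(L^2\)-forms of \(N\), for any left-invariant Riemannian metric; this is Efremov's theorem \cite{Efremov:Novikov} quoted in \fullref{sec:l2invariants}. I would fix the metric \(g\) so that the summands \(\fr{n}_k\) of the grading are mutually orthogonal and introduce the automorphisms \(\delta_t\in\operatorname{Aut}(N)\) with \(\delta_t|_{\fr{n}_k}=t^k\cdot\mathrm{id}\), which are group homomorphisms precisely because \([\fr{n}_i,\fr{n}_j]\subset\fr{n}_{i+j}\). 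Two features will be used. First, \(\delta_t\) is an isometry from \((N,\delta_t^*g)\) onto \((N,g)\) carrying \(\Gamma_N\) to the lattice \(\delta_t(\Gamma_N)\); since for a simply connected Lie group the analytic \(L^2\)-spectral density function of a left-invariant operator is the covolume of the lattice times an intrinsic density of states, the Novikov--Shubin invariant does not depend on the chosen cocompact lattice, so \(\alpha_p(N,g)=\alpha_p(N,\delta_t^*g)\) for every \(t>0\). Second, the Jacobian of \(\delta_t\) with respect to Haar measure is the constant \(t^{Q}\) with \(Q:=\sum_{k=1}^r k\dim\fr{n}_k\), the single point at which the homogeneous dimension \(Q\) enters.

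Since we may invoke earlier results, the positivity \(\alpha_p>0\) for \(1\leq p\leq\dim N\) is essentially immediate: \(\Gamma_N\) is an infinite, finitely generated nilpotent group, hence elementary amenable, residually finite (and thus of \(\det \geq 1\)-class) and normal in itself, so \fullref{thm:propofcellularl2}\,(\ref{item:propofcellularl2:aspherical}) applied to the finite aspherical complex \(\Gamma_N\backslash N\) gives even \(\alpha_p(N;\mathcal{N}(\Gamma_N))\geq 1\). Thus the content of the theorem is the upper bound \(\alpha_p\leq Q\).

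For the upper bound I would combine this scaling freedom with a comparison to a homogeneous model complex. Because the group law respects the grading, the de Rham complex carries a weight filtration in which a covector dual to \(\fr{n}_k\) has weight \(k\); Rumin's adapted complex is a chain-homotopy-equivalent replacement for it whose differentials \(d_c\) are genuinely \(\delta_t\)-homogeneous, of some orders \(\rho_p\geq 1\), and by the homotopy invariance of \(L^2\)-invariants \cite{Lueck:L2-Invariants} this replacement changes no Novikov--Shubin number. For a \(\delta_t\)-homogeneous operator the scaling is exact: conjugating \(d_c\) by \(\delta_t\) multiplies it by \(t^{-\rho_p}\) while distorting the underlying Hilbert \(\mathcal{N}(\Gamma_N)\)-module structure only by the covolume factor \(t^{Q}\). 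Hence for each small \(\lambda\) one can produce a \(\Gamma_N\)-invariant space of \(p\)-forms of von Neumann dimension at least \(c\,\lambda^{Q}\) on which \(d_c\) has norm at most \(\lambda\), concretely by spreading one compactly supported test form over a dilate \(\delta_s(B)\) of a fixed ball, whose Haar volume \(s^{Q}\operatorname{vol}(B)\) accounts for the exponent. This forces \(F_p(\lambda)\geq c\,\lambda^{Q}\) near zero and therefore \(\alpha_p(N;\mathcal{N}(\Gamma_N))\leq Q\) (in fact the sharper \(Q/\rho_p\)).

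The step I expect to be the main obstacle is exactly this bridge from the de Rham complex to the homogeneous model. A general grading need not be a stratification, so no left-invariant metric is truly homogeneous under \(\delta_t\), and the de Rham Laplacian acquires genuine lower-order terms after the dilation conjugation; one must show that the de Rham complex is a \emph{filtered} deformation of the homogeneous complex of the associated graded group without the Novikov--Shubin numbers jumping, and that the non-\(\Gamma_N\)-equivariant operators \(\delta_t^*\) nevertheless distort spectral density functions only by the expected covolume factor. Making this rigorous is the analytic heart of Rumin's argument \cite{Rumin:AroundHeat}: his adapted complex is built precisely so that its Laplacian splits into homogeneous pieces, after which the dilation bookkeeping above is routine. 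As a sanity check the bound specializes correctly, giving \(\alpha_p\leq\dim N\) for abelian \(N\), and it is consistent with the Lott--L\"uck estimate for the first Novikov--Shubin invariant of closed hyperbolic \(3\)-manifolds.
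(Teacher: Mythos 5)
The paper does not prove this statement at all: it is imported as Rumin's theorem, with references to \cite{Rumin:AroundHeat}*{Theorem~3.13} and \cite{Rumin:DiffGeoOnCC}*{Theorem~4}, so there is no in-house argument to measure yours against. On its own terms, your treatment of positivity is fine and stays inside the paper's toolkit: \(\Gamma_N\) is an infinite elementary amenable normal subgroup of itself, it is residually finite and hence of \(\det\geq 1\)-class, and \(\Gamma_N\backslash N\) is a finite aspherical CW-complex, so \fullref{thm:propofcellularl2}\,(\ref{item:propofcellularl2:aspherical}) indeed gives \(\alpha_p(N;\mathcal{N}(\Gamma_N))\geq 1\) for \(p\geq 1\).

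For the upper bound, however, what you have is a heuristic reconstruction of Rumin's strategy with the decisive steps missing, as you acknowledge. Two concrete problems. First, the mechanism you offer for the spectral density estimate does not work as stated: the \(\Gamma_N\)-invariant subspace generated by a single test form spread over \(\delta_s(B)\) is not an admissible competitor in the variational characterization of \(F_p\), since it contains rapidly oscillating combinations of translates whose Rayleigh quotients are of order one; a single F\o lner-type test form only shows \(F_p(\lambda)>0\) for all \(\lambda>0\), not \(F_p(\lambda)\geq c\,\lambda^{Q}\). The quantitative bound has to come from the exact scaling, under conjugation by \(\delta_t\) with Jacobian \(t^Q\), of the trace per unit covolume of the spectral projections of a genuinely homogeneous complex (equivalently, homogeneity of its heat trace), which you assert but do not establish. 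Second, and this is the heart of the matter, the bridge from the de Rham complex to such a homogeneous complex is precisely what needs proof: the Rumin complex consists of unbounded differential operators and the equivalence with de Rham is implemented by differential operators, so the homotopy invariance of \fullref{thm:propofcellularl2}\,(\ref{item:propofcellularl2:homotopy}) does not apply verbatim, and for a grading that is not a stratification the homogeneity claims themselves require care. Deferring all of this to \cite{Rumin:AroundHeat} is consistent with what the paper itself does---it simply cites Rumin---but it means your text is a correct outline of where the exponent \(\sum_{k} k\dim\fr{n}_k\) comes from rather than a proof of the theorem.
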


In fact, Rumin gives a finer pinching than the above, which in special cases gives precise values.  For example \(\alpha_2(N; \mathcal{N}(\Gamma_N)) = \sum_{k=1}^r k \dim \fr{n}_k\) if \(N\) is \emph{quadratically presented} \cite{Rumin:AroundHeat}*{Section 4.1, p.\,146}.

\begin{corollary} \label{cor:novikovshubinofnilpotent}
Let \(\Pp \subset \Gg\) be a proper rational parabolic subgroup.  Then for every torsion-free arithmetic subgroup \(\Gamma \subset \Gg(\Q)\) and each \(p = 1, \ldots, \dim N_P\) we have
\[ \alpha_p(N_P; \mathcal{N}(\Gamma_{N_P})) \leq d(N_P). \]
\end{corollary}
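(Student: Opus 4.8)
The plan is to deduce the corollary from M.\,Rumin's \fullref{thm:rumin} applied to the simply connected nilpotent Lie group \(N_P = \Nn_\Pp(\R)\). First I would record that \(\Gamma_{N_P} = \Gamma \cap N_P\) is a uniform lattice in \(N_P\): it is a lattice by reduction theory — it occurs as the typical fibre \(\Gamma_{N_P} \backslash N_P\) of the nilmanifold bundle in \fullref{thm:gammaepfiberbundle} — and a lattice in a simply connected nilpotent Lie group is automatically cocompact. So Rumin's hypotheses will be in force as soon as a grading of \(\fr{n}_P\) is specified.

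The grading I would use is the root grading attached to \(\Pp\). Every parabolic root \(\alpha \in \Phi(\fr{p}, \fr{a}_\Pp)\) is a unique nonnegative integer combination of the simple parabolic roots \(\Delta(\fr{p}, \fr{a}_\Pp)\); write \(\textup{ht}(\alpha) \geq 1\) for the sum of these coefficients. Then \(\fr{n}_k = \bigoplus_{\textup{ht}(\alpha) = k} \fr{n}_{P, \alpha}\) defines a direct sum decomposition \(\fr{n}_P = \bigoplus_{k=1}^r \fr{n}_k\), and the inclusions \([\fr{n}_{P, \alpha}, \fr{n}_{P, \beta}] \subseteq \fr{n}_{P, \alpha + \beta}\) — with the convention \(\fr{n}_{P, \gamma} = 0\) unless \(\gamma\) is a parabolic root — show that this is a grading in the sense of \fullref{thm:rumin}. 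That theorem then yields
\[ \alpha_p(N_P; \mathcal{N}(\Gamma_{N_P})) \leq \textstyle\sum_{k=1}^r k \dim \fr{n}_k \qquad \text{for } p = 1, \ldots, \dim N_P. \]

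It remains to identify \(\sum_k k \dim \fr{n}_k\) with \(d(N_P)\). By Guivarc'h's formula \cite{Guivarch:Croissance} the degree of polynomial growth of \(N_P\) equals \(\sum_{k \geq 1} k \dim(C^k \fr{n}_P / C^{k+1} \fr{n}_P)\), where \(C^\bullet\) denotes the lower central series. From the grading one automatically has \(C^j \fr{n}_P \subseteq \bigoplus_{k \geq j} \fr{n}_k\), and the point is that equality holds: the degree-one piece \(\fr{n}_1 = \bigoplus_{\alpha \in \Delta(\fr{p}, \fr{a}_\Pp)} \fr{n}_{P, \alpha}\) generates \(\fr{n}_P\) as a Lie algebra, so \(\fr{n}_P\) is stratified and the root grading is its canonical stratification. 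This is the standard structural fact that the unipotent radical of a parabolic subgroup is generated by its simple root spaces; one checks it root by root, using that every parabolic root of height \(\geq 2\) can be written as \(\alpha + \gamma\) with \(\alpha\) simple and \(\gamma\) again a parabolic root, together with surjectivity of the bracket \([\fr{n}_{P, \alpha}, \fr{n}_{P, \gamma}] \to \fr{n}_{P, \alpha + \gamma}\). Granting this, \(C^k \fr{n}_P = \bigoplus_{j \geq k} \fr{n}_j\), whence \(\sum_k k \dim \fr{n}_k = d(N_P)\) and the corollary follows. The only nonformal step — and the one I expect to require the most care — is exactly this verification that the root grading realizes the lower central series, rather than being a strictly finer grading for which \(\sum_k k \dim \fr{n}_k\) would give only an upper bound for \(d(N_P)\) in the wrong direction.
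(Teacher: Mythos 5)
Your proposal is correct and follows essentially the same route as the paper: apply Rumin's theorem to \(N_P\) with the grading of \(\fr{n}_P\) by heights of parabolic roots and then identify the weighted sum \(\sum_k k \dim \fr{n}_k\) with \(d(N_P)\) via Guivarc'h's formula for the lower central series. The only difference is one of explicitness — you spell out that \(\Gamma_{N_P}\) is a uniform lattice and that the degree-one root spaces generate \(\fr{n}_P\) (so that the root grading realizes the lower central series), points the paper's proof asserts more tersely.
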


\begin{proof}
At the end of \fullref{sec:rationalparabolicsubgroups} we have seen that the Lie algebra \(\fr{n}_P\) of \(N_P\) is conjugate to a standard \(\fr{n}_I = \bigoplus_{\alpha \in \Sigma} \fr{n}_{P, \alpha}\) and thus graded by the lengths of parabolic roots.  Since \([\fr{n}_{P, \alpha}, \fr{n}_{P, \beta}] \subset \fr{n}_{P, \alpha + \beta}\) by Jacobi identity, this graded algebra can be identified with the graded algebra associated with the filtration of \(\fr{n}_P\) coming from its lower central series.  It thus follows from \cite{Guivarch:Croissance}*{Th\'{e}or\`{e}me~II.1, p.\,342} that the weighted sum appearing in \fullref{thm:rumin} equals the degree of polynomial growth of \(N_P\).
\end{proof}

\begin{proposition} \label{prop:novikovshubinofboundarycomponent}
Suppose \(\rank_\Q(\Gg) = 1\).  Then for every proper rational parabolic subgroup \(\Pp \subset \Gg\) and every torsion-free arithmetic subgroup \(\Gamma \subset \Gg(\Q)\) we have
\[ \alpha_q(e(\Pp); \mathcal{N}(\Gamma_P)) \leq \textup{f-rank}(X_\Pp) + d(N_P). \]
\end{proposition}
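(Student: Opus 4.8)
The plan is to use that, since $\rank_\Q(\Gg)=1$ and $\Pp$ is proper, $\Pp$ is a minimal rational parabolic subgroup, so $\rank_\Q(\Mm_\Pp)=0$; consequently $\Gamma_{M_\Pp}'\backslash X_\Pp$ is compact and $\Gamma_{N_P}\backslash N_P$ is a compact nilmanifold, and therefore $\Gamma_P$ acts freely, properly and cocompactly on the contractible manifold $e(\Pp)=N_P\times X_\Pp$. Hence $e(\Pp)$ is a finite free $\Gamma_P$-CW complex, a model for $E\Gamma_P$, and the quantity on the left-hand side is defined and equals $\alpha_q(\Gamma_P)$. Because abstractly commensurable groups have equal Novikov--Shubin invariants \cite{Lueck-Reich-Schick:ArbitraryNovikovShubin}, I would first replace $\Gamma$ by a neat subgroup of finite index; then $\Gamma_{N_P}$ and $\Gamma_{M_\Pp}'$ are neat and, exactly as in \fullref{thm:gammaepfiberbundle} but for the open boundary component, $\Gamma_P\backslash e(\Pp)$ becomes a real analytic fiber bundle over the closed locally symmetric space $\Gamma_{M_\Pp}'\backslash X_\Pp$ with the compact nilmanifold $\Gamma_{N_P}\backslash N_P$ as typical fiber.

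The estimate of $\alpha_q(e(\Pp);\mathcal N(\Gamma_P))$ then rests on a product bound
\[ \alpha_q(e(\Pp);\mathcal N(\Gamma_P))\ \le\ \alpha_j(N_P;\mathcal N(\Gamma_{N_P}))+\alpha_k(X_\Pp;\mathcal N(\Gamma_{M_\Pp}')) \]
for suitable degrees $j,k$ close to and summing up to $q$, coming from the Künneth-type product formula for Novikov--Shubin invariants of finite Hilbert chain complexes \cite{Lueck:L2-Invariants} together with $\widetilde\alpha_n=\frac12\min\{\alpha_n,\alpha_{n+1}\}$. Into the right-hand side I would substitute \fullref{cor:novikovshubinofnilpotent}, which gives $\alpha_j(N_P;\mathcal N(\Gamma_{N_P}))\le d(N_P)$ for $1\le j\le\dim N_P$ and $\infty^+$ otherwise, and \fullref{thm:l2symmetricspaces}\,(\ref{item:l2symmetricspaces:novikov}), which gives $\alpha_k(X_\Pp;\mathcal N(\Gamma_{M_\Pp}'))=\textup{f-rank}(X_\Pp)$ for $k$ in the band $[\frac{\dim X_\Pp-\textup{f-rank}(X_\Pp)}{2}+1,\frac{\dim X_\Pp+\textup{f-rank}(X_\Pp)}{2}]$ and $\infty^+$ outside of it; in the case $\textup{f-rank}(X_\Pp)=0$ this band is empty, but then $X_\Pp$ carries a nonzero $L^2$-Betti number in its middle degree, which takes over the role of the band in the product formula. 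Since $\dim e(\Pp)=\dim N_P+\dim X_\Pp=\dim X-1$ and $\dim N_P\ge1$, a bookkeeping argument — in which the spectral gaps $\infty^+$ furnished by \fullref{thm:l2symmetricspaces}\,(\ref{item:l2symmetricspaces:novikov}) are used to isolate the relevant summand, and in which the parity of $\dim X$ enters the precise degree shift — shows that $q$ always lies in the range where the value $d(N_P)+\textup{f-rank}(X_\Pp)$ is attained.

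The main obstacle is that, by \fullref{prop:actiononoverlineep}, $\Gamma_P$ acts on $e(\Pp)=N_P\times X_\Pp$ not as the direct product $\Gamma_{N_P}\times\Gamma_{M_\Pp}'$ but through the semidirect product $N_P\rtimes M_\Pp$, so $e(\Pp)$ is a twisted bundle and not a product of $\Gamma$-CW complexes; thus the product formula does not apply verbatim and one must prove its fibered analogue — the technically most demanding point. I would establish it by exploiting that the typical fiber $\Gamma_{N_P}\backslash N_P$ is a $\det$-$L^2$-acyclic compact nilmanifold, which makes the bundle behave like a product as far as Novikov--Shubin invariants are concerned. Concretely, the upper bound on $\alpha_q(e(\Pp))$ that we need amounts to exhibiting sufficiently large subspaces of $C^{(2)}_q(e(\Pp);\mathcal N(\Gamma_P))$ on which the Laplacian is small, and these can be produced by combining the vertical eigenforms of Rumin's construction on the nilmanifold fibers with horizontal eigenforms pulled back from the construction of \fullref{thm:l2symmetricspaces} on the base. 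Complementarily, filtering $C^{(2)}_*(e(\Pp);\mathcal N(\Gamma_P))$ by the preimages of the skeleta of $\Gamma_{M_\Pp}'\backslash X_\Pp$ produces subquotients that are direct sums of shifted copies of $C^{(2)}_*(N_P;\mathcal N(\Gamma_{N_P}))$ induced up along $\Gamma_{N_P}\le\Gamma_P$ — and induction preserves spectral density functions, hence Novikov--Shubin invariants — so that iterating the short exact sequence estimate \cite{Lueck:L2-Invariants}*{Theorem~2.20}, as in \fullref{lemma:novikovshubinmanifoldwithboundary}, confirms the product bound from the other side.
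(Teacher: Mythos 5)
Your overall route is the same as the paper's: reduce to the Künneth-type product formula, feed in \fullref{cor:novikovshubinofnilpotent} for the nilpotent factor and the values for the boundary symmetric space, and do the degree bookkeeping at the middle dimension \(q\). You also correctly isolate the real difficulty, namely that \(\Gamma_P\) does not act on \(e(\Pp)=N_P\times X_\Pp\) as the product \(\Gamma_{N_P}\times\Gamma_{M_\Pp}\). But this is exactly where your argument has a genuine gap: the ``fibered analogue of the product formula'' is asserted, not proved. Combining Rumin's vertical approximate eigenforms with horizontal ones from the base does produce candidate almost-harmonic chains, but the whole issue is to estimate the \emph{\(\mathcal N(\Gamma_P)\)-dimension} of the resulting spectral subspaces, i.e.\ to compute traces with respect to the twisted action --- and your sketch never addresses this. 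The complementary skeletal filtration cannot repair it either: its subquotients are induced copies of \(C^{(2)}_*(N_P;\mathcal N(\Gamma_{N_P}))\) and therefore only see \(d(N_P)\); the contribution \(\textup{f-rank}(X_\Pp)\) is hidden in the connecting maps, which the short exact sequence estimates of \cite{Lueck:L2-Invariants}*{Theorem~2.20} do not quantify. Moreover, an \emph{upper} bound on \(\alpha_q\) requires exhibiting spectral density, and those inequalities (which in any case involve reciprocals and vanishing hypotheses on \(L^2\)-Betti numbers) are not set up to deliver the bound \(d(N_P)+\textup{f-rank}(X_\Pp)\) in the degree \(q\) you need.

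The paper closes this gap with one citation instead of a new fibered product formula: since \(\Pp\) is minimal, \(e(\Pp)\) is closed in \(\overline X\), so \(\Gamma_P\) acts freely, properly and \emph{cocompactly} on \(N_P\times X_\Pp\); the product group \(\Gamma_{N_P}\times\Gamma_{M_\Pp}\) acts freely, properly and cocompactly on the same space, and by \cite{Lueck:L2-Invariants}*{Theorem~3.183, p.\,201} the Novikov--Shubin invariants do not depend on which such group one uses, whence \(\alpha_q(e(\Pp);\mathcal N(\Gamma_P))=\alpha_q(N_P\times X_\Pp;\mathcal N(\Gamma_{N_P}\times\Gamma_{M_\Pp}))\). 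After that swap the ordinary product formula \cite{Lueck:L2-Invariants}*{Theorem~2.55(3)} applies verbatim (one must check the limit property for both factors, which follows from the explicit computations of Rumin and Olbrich --- a point you should also record). Two smaller corrections to your bookkeeping: \(\Mm_\Pp\) is only reductive, so \(X_\Pp\) may have a Euclidean factor and \fullref{thm:l2symmetricspaces}\,(\ref{item:l2symmetricspaces:novikov}) does not apply to it verbatim; the paper instead uses \cite{Lueck:L2-Invariants}*{equation~(5.14), p.\,230} to bound \(\alpha\) of \(X_\Pp=X_\Pp^{\textup{Eucl}}\times X_\Pp^{\textup{nc}}\) by \(\textup{f-rank}(X_\Pp)\) in the relevant band, and in the case \(\textup{f-rank}(X_\Pp)>0\) with \(\dim X_\Pp\) even one needs the parity observation \(\textup{f-rank}(X_\Pp)\ge 2\) to see that the middle degree lies in that band. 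With Theorem~3.183 in place of your fibered product formula, the rest of your outline matches the paper's proof.
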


\begin{proof} Fix such \(\Pp \subset \Gg\) and \(\Gamma \subset \Gg(\Q)\).  We mentioned below \fullref{def:neat} that \(\Gamma\) possesses a neat and thus torsion-free subgroup of finite index.  It induces a neat subgroup of finite index of \(\Gamma_P\).  Since Novikov--Shubin invariants remain unchanged for finite index subgroups, we may assume that \(\Gamma\) itself is neat.  Thus \(\Gamma_{M_\Pp}\) acts freely on \(X_\Pp\).  As \(\rank_\Q(\Gg) = 1\), every proper rational parabolic subgroup is minimal (and maximal).  So the boundary component \(e(\Pp)\) is closed as we observed below \fullref{prop:closureofep}. Therefore the \(\Gamma_P\)-action on \(e(\Pp)\) is cocompact.  Since also \(\Gamma_{N_P} \times \Gamma_{M_\Pp}\) acts cocompactly, \cite{Lueck:L2-Invariants}*{Theorem 3.183, p.\,201} implies
\[ \alpha_q(e(\Pp); \mathcal{N}(\Gamma_P)) = \alpha_q(N_P \times X_\Pp; \mathcal{N}(\Gamma_{N_P} \times \Gamma_{M_\Pp})). \]
This observation enables us to apply the \emph{product formula} for Novikov--Shubin invariants \cite{Lueck:L2-Invariants}*{Theorem~2.55(3), p.\,97}.  It says that \(\alpha_q(N_P \times X_\Pp; \mathcal{N}(\Gamma_{N_P} \times \Gamma_{M_\Pp}))\) equals the minimum of the union of the four sets
\begin{align*}
\{ &\alpha_{i+1}(N_P) + \alpha_{q-i}(X_\Pp) \co \ i = 0, \ldots, q-1 \},\\
\{ &\alpha_i(N_P) + \alpha_{q-i}(X_\Pp) \co \ i = 1, \ldots, q-1 \},\\
\{ &\alpha_{q-i}(X_\Pp) \co \ i = 0, \ldots, q-1, \ b^{(2)}_i(N_P) > 0 \},\\
\{ &\alpha_i(N_P) \co \ i = 1, \ldots, q, \ b^{(2)}_{q-i}(X_\Pp) > 0 \}.
\end{align*}
We need to discuss one subtlety here.  Applying the product formula requires us to verify that both \(N_P\) and \(X_\Pp\) have the \emph{limit property}.  This means that ``\(\liminf\)'' in \fullref{def:l2invariants}\,(\ref{item:l2invariants:novikovshubin}) equals ``\(\limsup\)'' of the same expression.  But this follows from the explicit calculations in \cite{Rumin:DiffGeoOnCC} and \cite{Olbrich:L2-InvariantsLocSym}.  Note that the third set above is actually empty because of \fullref{thm:propofcellularl2}\,(\ref{item:propofcellularl2:aspherical}).  The group \(\Mm_\Pp = \mathbf{Z}_\Pp \Mm_\Pp'\) is the almost direct product of its center \(\mathbf{Z}_\Pp\) and the derived subgroup \(\Mm_\Pp' = [\Mm_\Pp, \Mm_\Pp]\) which is semisimple.  Accordingly, the boundary symmetric space \(X_\Pp = X_\Pp^{\text{Eucl}} \times X_\Pp^{\text{nc}}\) is the product of a Euclidean symmetric space and a symmetric space of noncompact type.  Clearly \(\textup{f-rank}(X_\Pp^{\textup{Eucl}}) = \dim X_\Pp^{\textup{Eucl}}\) so that
\[ \textup{f-rank}(X_\Pp) = \textup{f-rank}(X_\Pp^{\textup{Eucl}} \times X_\Pp^{\textup{nc}}) = \dim X_\Pp^{\text{Eucl}} + \textup{f-rank}(X_\Pp^{\textup{nc}}). \]
As \(\textup{s-rank}(\Pp) = 1\) we get \(\dim e(\Pp) = \dim X - 1\) with \(\dim X = 2q\) or \(\dim X = 2q + 1\).  Let us set \(n = \dim N_P\), hence \(\dim X_\Pp = \dim X - 1 - n\).  Now we distinguish two cases.  First we assume that \(\textup{f-rank}(X_\Pp) = 0\).  Then \(X_\Pp = X_\Pp^{\text{nc}}\) is even-dimensional and we obtain from \fullref{thm:l2symmetricspaces}\,(\ref{item:l2symmetricspaces:l2betti}) that \(b^{(2)}_{q - \lceil \frac{n}{2} \rceil}(X_\Pp) > 0\).  Here for a real number \(a \in \R\) we denote by \(\lceil a \rceil\) and \(\lfloor a \rfloor\) the smallest integer not less than \(a\) and the largest integer not more than \(a\), respectively.  Therefore the Novikov--Shubin invariant \(\alpha_{\lceil \frac{n}{2} \rceil} (N_P)\) appears in the fourth set above and is bounded by \(d(N_P)\) according to \fullref{cor:novikovshubinofnilpotent}.  Now let us assume \(\textup{f-rank}(X_\Pp) > 0\).  We compute \(q - \lceil \frac{n}{2} \rceil = \lfloor \frac{\dim X_\Pp + 1}{2} \rfloor\) if \(\dim X = 2q\) and \(q - \lfloor \frac{n}{2} \rfloor = \lceil \frac{\dim X_\Pp}{2} \rceil\) if \(\dim X = 2q+1\).  We claim that both values lie in the interval \([ \frac{1}{2}(\dim X_\Pp - \textup{f-rank}(X_\Pp)) + 1, \frac{1}{2}(\dim X_\Pp + \textup{f-rank}(X_\Pp)) ]\).  This is clear if \(\dim X_\Pp\) is odd because then both values equal \(\frac{\dim X_\Pp + 1}{2}\) which is the arithmetic mean of the interval limits.  If on the other hand \(\dim X_\Pp\) is even, then both values equal \(\frac{\dim X_\Pp}{2}\).  The fundamental rank \(\textup{f-rank}(X_\Pp)\) is then likewise even and thus \(\textup{f-rank}(X_\Pp) \geq 2\).  Therefore \(\frac{1}{2}(\dim X_\Pp - \textup{f-rank}(X_\Pp)) + 1 \leq \frac{\dim X_\Pp}{2}\) and the claim is verified.  It follows from \cite{Lueck:L2-Invariants}*{equation (5.14), p.\,230} that in the two cases \(\alpha_{q - \lceil \frac{n}{2} \rceil} (X_\Pp)\) and \(\alpha_{q - \lfloor \frac{n}{2} \rfloor}(X_\Pp)\) are bounded by \(\text{f-rank}(X_\Pp^{\text{nc}}) + \dim X_\Pp^{\text{Eucl}} = \textup{f-rank}(X_\Pp)\).  Moreover \(\alpha_{\lceil \frac{n}{2} \rceil} (N_P) \leq d(N_P)\) and \(\alpha_{\lfloor \frac{n}{2} \rfloor} (N_P) \leq d(N_P)\) again by \fullref{cor:novikovshubinofnilpotent} so that either the number \(\alpha_{\lceil \frac{n}{2} \rceil} (N_P) + \alpha_{q - \lceil \frac{n}{2} \rceil} (X_\Pp)\) or the number \(\alpha_{\lfloor \frac{n}{2} \rfloor} (N_P) + \alpha_{q - \lfloor \frac{n}{2} \rfloor} (X_\Pp)\) appears in the second of the four sets above and both are bounded by \(d(N_P) + \textup{f-rank}(X_\Pp)\).  So in any case we conclude \(\alpha_q(e(\Pp)) \leq \textup{f-rank}(X_\Pp) + d(N_P)\).
\end{proof}

We make one last elementary observation to prepare the proof of \fullref{thm:novikovshubinqrankone}.

\begin{lemma} \label{lemma:injectivefundamentalgroup}
Let the discrete group \(\Gamma\) act freely and properly on the path-connected space \(X\).  Let \(Y \subset X\) be a simply connected subspace which is invariant under the action of a subgroup \(\Lambda \leq \Gamma\).  Then the induced homomorphism \(\Lambda = \pi_1(\Lambda \backslash Y) \rightarrow \pi_1(\Gamma \backslash X)\) is injective.
\end{lemma}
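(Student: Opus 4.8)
The plan is to run a direct covering-space argument organised around the commutative square relating the two quotient maps. First I would record that, because \(\Gamma\) acts freely and properly on the path-connected space \(X\), the projection \(p\co X \rightarrow \Gamma\backslash X\) is a covering map; restricting the action to \(\Lambda \leq \Gamma\) and to the \(\Lambda\)-invariant simply connected subspace \(Y\), the projection \(p_Y\co Y \rightarrow \Lambda\backslash Y\) is a universal covering. This is precisely what underlies the identification \(\pi_1(\Lambda\backslash Y) \cong \Lambda\) appearing in the statement: after fixing a base point \(y_0 \in Y\) with image \(\bar y_0 \in \Lambda\backslash Y\), an element \(\lambda \in \Lambda\) corresponds to the class of a loop at \(\bar y_0\) whose lift through \(p_Y\) starting at \(y_0\) terminates at \(\lambda y_0\). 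The inclusion \(\iota\co Y \hookrightarrow X\) is equivariant for \(\Lambda \leq \Gamma\), hence descends to a map \(\bar\iota\co \Lambda\backslash Y \rightarrow \Gamma\backslash X\) with \(p \circ \iota = \bar\iota \circ p_Y\).

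Next I would take \(\lambda \in \Lambda = \pi_1(\Lambda\backslash Y, \bar y_0)\) lying in the kernel of \(\bar\iota_*\), represent it by a loop \(\gamma\) at \(\bar y_0\), and let \(\tilde\gamma\co [0,1] \rightarrow Y\) be its lift through \(p_Y\) with \(\tilde\gamma(0) = y_0\), so that \(\tilde\gamma(1) = \lambda y_0\) by the description above. By commutativity of the square, the path \(\iota \circ \tilde\gamma\) in \(X\) is a lift through \(p\) of the loop \(\bar\iota \circ \gamma\) in \(\Gamma\backslash X\), with initial point \(y_0\). Since \(\bar\iota_*(\lambda) = 1\), the loop \(\bar\iota \circ \gamma\) is null-homotopic rel base point; lifting a null-homotopy through the covering \(p\) shows that its lift starting at \(y_0\) is a loop. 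By uniqueness of path lifting this lift is \(\iota \circ \tilde\gamma\), so \(\iota \circ \tilde\gamma\) ends at \(y_0\). Comparing endpoints gives \(\lambda y_0 = y_0\) in \(X\), and freeness of the \(\Gamma\)-action forces \(\lambda = e\). Hence \(\bar\iota_*\) is injective, which is the assertion.

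I do not anticipate a real obstacle: the whole argument is a standard diagram chase in elementary covering-space theory. The only points that need a little care are checking that ``acts freely and properly'' genuinely delivers the covering maps \(p\) and \(p_Y\) (so that the homotopy lifting property and uniqueness of path lifting apply) and bookkeeping the base points through the equivariant inclusion \(\iota\). The essential covering-theoretic input is merely that a null-homotopic loop in the base lifts to a loop in the total space of a connected cover.
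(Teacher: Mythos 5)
Your argument is correct and is essentially the paper's proof unwound: the paper phrases it as a commutative square of groups \(\pi_1(\Lambda\backslash Y)\to\Lambda\), \(\pi_1(\Gamma\backslash X)\to\Gamma\) (monodromy maps of the two coverings), where the top map is an isomorphism since \(Y\) is simply connected and the right map \(\Lambda\hookrightarrow\Gamma\) is injective, forcing injectivity on the left. Your explicit path-lifting chase, ending with \(\lambda y_0=y_0\) and freeness of the action, is the element-wise version of exactly this diagram argument, so there is nothing to add.
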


\begin{proof}
From covering theory we obtain a commutative diagram of groups
\[
\begin{xy}
\xymatrix{
\pi_1(\Lambda \backslash Y) \ar[r] \ar[d] & \Lambda \ar[d] \\
\pi_1(\Gamma \backslash X) \ar[r] & \Gamma.
}
\end{xy}
\]
The upper map is an isomorphism and the right hand map is injective.  So the left hand map must be injective as well.
\end{proof}

\begin{proof}[Proof of \fullref{thm:novikovshubinqrankone}]
Again by Selberg's Lemma and stability of Novikov-Shubin invariants for finite index subgroups \cite{Lueck-Reich-Schick:ArbitraryNovikovShubin}*{Theorem~3.7.1}, we may assume that \(\Gamma\) is torsion-free.  The bordification \(\overline{X}\) is \(L^2\)-acyclic by \fullref{thm:l2bettiofgenerallattices}.  According to \fullref{lemma:novikovshubinmanifoldwithboundary} we thus have \(\widetilde{\alpha}_q(\overline{X}) \leq \alpha_q(\partial \overline{X})\).  Recall from \eqref{eqn:borelserreboundary} that the Borel--Serre boundary \(\partial \overline{X} = \bigcupdot_{\Pp \subsetneq \Gg} e(\Pp)\) is given by the disjoint union of all boundary components of proper rational parabolic subgroups.  Since \(\rank_\Q(\Gg) = 1\), every proper rational parabolic subgroup is minimal so all the boundary components are closed.  As \(\overline{X}\) is normal (\(T_4\)), the Borel--Serre boundary is in fact the coproduct \(\partial \overline{X} = \coprod_{\Pp \text{ min}} e(\Pp)\) of all boundary components of minimal rational parabolic subgroups.  \fullref{prop:decompofborelserre} implies that there is a finite system of representatives \(\Pp_1, \ldots, \Pp_k\) of \(\Gamma\)-conjugacy classes of minimal rational parabolic subgroups which give the decomposition \(\Gamma \backslash \partial \overline{X} = \coprod_{i=1}^k e'(\Pp_i)\).  It thus follows from \fullref{lemma:injectivefundamentalgroup} applied to each \(e(\Pp_i) \subset \overline{X}\) and \(\Gamma_{P_i} \leq \Gamma\) that \(\partial \overline{X} = \coprod_{i=1}^k e(\Pp_i) \times_{\Gamma_{P_i}} \Gamma\).  According to \cite{Lueck:L2-Invariants}*{Lemma~2.17(3), p.\,82} we obtain \(\alpha_q(\partial \overline{X}) = \min_i\, \{ \alpha_q(e(\Pp_i) \times_{\Gamma_{P_i}} \Gamma)\}\).  Since the minimal rational parabolic subgroups \(\Pp_1, \ldots, \Pp_k\) are \(\Gg(\Q)\)-conjugate, we have in fact \(\alpha_q(\partial \overline{X}) = \alpha_q(e(\Pp_1) \times_{\Gamma_{P_1}} \Gamma)\).  The induction principle for Novikov--Shubin invariants \cite{Lueck:L2-Invariants}*{Theorem~2.55(7), p.\,98} in turn says that \(\alpha_q(e(\Pp_1) \times_{\Gamma_{P_1}} \!\Gamma; \,\mathcal{N}(\Gamma)) = \alpha_q(e(\Pp_1); \,\mathcal{N}(\Gamma_{P_1}))\) which is bounded from above by \(\textup{f-rank}(X_{\Pp_1}) + d(N_{P_1})\) according to \fullref{prop:novikovshubinofboundarycomponent}.
\end{proof}

For the following example we assume some familiarity with the classification theory of semisimple algebraic groups over non-algebraically closed fields as outlined in \cite{Tits:Classification}.

\begin{example}
\label{ex:quadraticform}
Upon discussions with F.\,Veneziano and M.\,Wiethaup we have come up with the family of senary diagonal quadratic forms
\[ Q^p = \langle 1, 1, 1, -1, -p, -p \rangle \]
over \(\Q\) where \(p\) is a prime congruent to  \(3 \!\mod 4\).  Let \(\Gg^p = \textup{SO}(Q^p; \C)\) be the \(\Q\)-subgroup of \(\textup{SL}(6; \C)\) of matrices preserving \(Q^p\).  By Sylvester's law of inertia, the groups \(\Gg^p\) are \(\R\)-isomorphic to \(\textup{SO}(3,3; \C)\), so that \(\Gg^p(\R) \cong \textup{SO}(3,3)\) which has deficiency one.  Over \(\Q\) there is an obvious way of splitting off one hyperbolic plane,
\[ Q^p = \langle 1, -1 \rangle \perp \langle 1, 1, -p, -p \rangle, \]
but the orthogonal complement \(\langle 1, 1, -p, -p \rangle\) is \(\Q\)-anisotropic.  To see this, recall from elementary number theory that if a prime congruent to \(3 \!\mod 4\) divides a sum of squares, then it must divide each of the squares.  It thus follows from infinite descent that the Diophantine equation \(x_1^2 + x_2^2 = p(x_3^2 + x_4^2)\) has no integer and thus no rational solution other than zero.  Therefore \(\rank_\Q(\Gg^p) = 1\) and \(\Gg^p\) satisfies the conditions of \fullref{thm:novikovshubinqrankone}.  The group \(\Gg^p\) is \(\overline{\Q}\)-isomorphic to \(\textup{SO}(6; \C)\) which accidentally has \(\textup{SL}(4;\C)\) as a double cover and thus is of type \(A_3\).  Note that the hyperbolic plane in the above decomposition of \(Q^p\) gives an obvious embedding of a one-dimensional \(\Q\)-split torus \(\Ss\) into \(\Gg^p\).   Let \(\mathbf{T} \subset \Gg^p\) be a maximal torus containing \(\Ss\).  Then from the tables in \cite{Tits:Classification} , one sees that \(\Gg^p\) can only have one of the following two \emph{Tits indices}.
\centerline{\includegraphics[width=5.1cm]{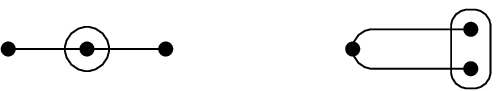}}
The Tits index is given by the Dynkin diagram of the root system \(\Phi(\Gg^p, \mathbf{T})\) where simple roots in the same \(\textup{Gal}(\overline{\Q}/\Q)\)-orbit are drawn close to one another and where the \emph{distinguished orbits}, consisting of roots that do not restrict to zero on \(\Ss\), are circled.  To find out which of the above indices is correct, let \(\Pp \subset \Gg_p\) be a minimal parabolic subgroup corresponding to a choice of positive \emph{restricted roots} of \(\Gg^p\) with respect to \(\Ss = \Ss_\Pp\).  The centralizer \(\mathcal{Z}_{\Gg^p}(\Ss_\Pp) =  \Ss_\Pp \Mm_\Pp = \Ss_\Pp \mathbf{Z}_\Pp \Mm_\Pp'\) obviously contains a \(\Q\)-subgroup that is \(\R\)-isomorphic to \(\textup{SO}(2,2;\C)\) so that \(\textup{SO}(4;\C) \subset \Mm_\Pp'\)as a \(\C\)-embedding.  Because of the exceptional isomorphism \(D_2 = A_1 \times A_1\), the Dynkin diagram of \(\Mm_\Pp'\) must contain two disjoint nodes.  But we obtain the Dynkin diagram and in fact the Tits index of \(\Mm_\Pp'\) by removing the distinguished orbits.  Therefore we see that only the left hand Tits index can correspond to \(\Gg^p\).  Since it is of \emph{inner type} \cite{Tits:Classification}, the center \(\mathbf{Z}_\Pp\) of \(\Mm_\Pp\) is trivial and in fact \(\Mm_\Pp = \Mm_\Pp' \cong_\R \textup{SO}(2,2;\C)\).  Thus \(\delta(M_\Pp) = \delta(\textup{SO}(2,2)) = \delta(\textup{SL}(2;\R) \times \textup{SL}(2,\R)) = 0\).

Now we explain how to compute the number \(d(N_P)\).  The Lie algebra \(\fr{n}_P\) of \(N_P\) has the decomposition \(\fr{n}_P = \bigoplus_{\alpha \in \Sigma} \fr{n}_{P, \alpha}\) as we saw at the end of \fullref{sec:rationalparabolicsubgroups} so that \(\fr{n}_P\) is graded by parabolic root lengths.  In view of the formula in \fullref{thm:rumin} it only remains to determine \(\Sigma\) and the \emph{multiplicities} \(m_\alpha\) given by the dimensions of the root spaces \(\fr{n}_{P, \alpha}\).  The root system \(\Phi(\Gg, \mathbf{T})\) is three-dimensional so that we can see this data visually in \fullref{fig:roota3res}.
\begin{figure}[htb]
\centering
\includegraphics[width=5.5cm]{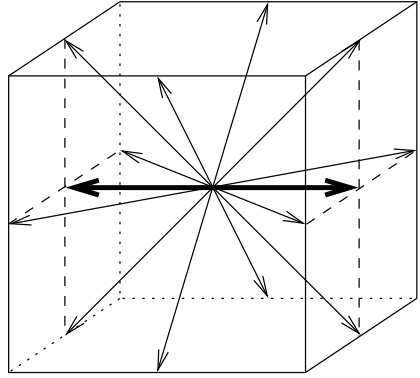}
\caption{The root system of type \(A_3\) with the restricted root system depicted by thick arrows.}
\label{fig:roota3res}
\end{figure}
In the Tits index of \(\Gg^p\), the left hand node corresponds to the arrow pointing up front, the center node corresponds to the arrow pointing down right and the right hand node corresponds to the arrow pointing up rear.  Since both the left and right nodes of the Tits index do not lie in distinguished orbits, the subspace \(X_\Q(\Ss_\Pp) \otimes_\Z \R\) is given by the intersection of the planes orthogonal to their corresponding arrows which is the line going through the centers of the left face and right face of the cube.  It follows that the restricted root system \(\Phi(\Gg^p, \Ss_\Pp)\) is of type \(A_1\) and that four roots of \(\Phi(\Gg^p, \mathbf{T})\) restrict to each of the two roots in \(\Phi(\Gg^p, \Ss_\Pp)\).  Thus we have only one root of length one and multiplicity four in \(\Sigma = \Phi^+(\Gg_\Pp, \Ss_\Pp)\) which gives \(d(N_P) = 4\).  The symmetric space of \(\Gg^p(\R)\) has dimension nine, so \fullref{thm:novikovshubinqrankone} gives
\[ \widetilde{\alpha}_4(\Gg^p(\Z)) \leq 4. \]
Note that the bound is uniform in \(p\) even though the quadratic forms \(Q^p\) and hence the groups \(\Gg^p\) are definitely not mutually \(\Q\)-isomorphic.  Since \(\textup{SO}(6; \C)\) is doubly covered by \(\textup{SL}(4; \C)\), we can take the preimage of \(\Gg^p(\Z)\) to get nonuniform lattices in \(\textup{SL}(4; \R)\) whose fourth Novikov-Shubin invariant is equally bounded by four.
\end{example}

Now we turn our attention to \(L^2\)-torsion.  Recall that \(L^2\)-torsion is only defined for groups which are \(\det\)-\(L^2\)-acyclic.  For a lattice \(\Gamma \subset G\) in a semisimple Lie group we have \(\Gamma \in \mathcal{G}\) so that this is equivalent to \(\delta(G) > 0\) by \fullref{thm:l2bettiofgenerallattices}.  Among the rank one simple Lie groups, the only groups with positive deficiency are \(G = \textup{SO}^0(2n+1,1)\) which have been treated by W.\,L\"uck and T.\,Schick in \cite{Lueck-Schick:Hyperbolic}.  For higher rank Lie groups, we again have Margulis arithmeticity available so that the following Theorem will be enough to cover general lattices in even deficiency groups as we will see subsequently.

\begin{theorem} \label{thm:l2torsionlattices}
Let \(\Gg\) be a connected semisimple linear algebraic \(\Q\)-group.  Suppose that \(\Gg(\R)\) has positive, even deficiency.  Then every torsion-free arithmetic lattice \(\Gamma \subset \Gg(\Q)\) is \(\det\)-\(L^2\)-acyclic and
\[ \rho^{(2)}(\Gamma) = 0. \]
\end{theorem}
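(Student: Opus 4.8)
The plan is to realize $\overline{X}$ as a finite model for $E\Gamma$, use the even dimension of $X$ to reduce the computation of $\rho^{(2)}(\Gamma)$ to the Borel--Serre boundary $\partial\overline{X}$, and then show that $\partial\overline{X}$ contributes nothing because each of its strata fibers over a locally symmetric space with a nilmanifold as fiber. To set up, recall that $L^2$-torsion is multiplicative under finite coverings and that $\Gamma$ contains a neat subgroup of finite index (\fullref{def:neat}) on which we may additionally require the $\Gamma$-action on the oriented contractible manifold $\overline{X}$ to be orientation-preserving; so we may assume $\Gamma$ is neat and $\Gamma\backslash\overline{X}$ is orientable. By \fullref{cor:bordificationcontractible} and \fullref{thm:borelserrecompact} the bordification $\overline{X}$ is then a finite $E\Gamma$, whence $\rho^{(2)}(\Gamma)=\rho^{(2)}(\overline{X};\mathcal{N}(\Gamma))$. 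Since $\delta(\Gg(\R))>0$, \fullref{thm:l2bettiofgenerallattices} gives $b^{(2)}_p(\Gamma)=0$ for all $p$, and as lattices are residually finite and hence lie in $\mathcal{G}$, the group $\Gamma$ is $\det$-$L^2$-acyclic. Writing $n=\dim X$ and $m=\delta(\Gg(\R))$, the integer $n-m$ is even (noted after \fullref{thm:l2symmetricspaces}), so $n$ is even because $m$ is. Thus $\overline{X}$ is an even-dimensional free proper cocompact orientable $\det$-$L^2$-acyclic $\Gamma$-manifold with boundary, and \fullref{lemma:l2torsionmanifoldwithboundary} yields
\[ \rho^{(2)}(\Gamma)=\rho^{(2)}(\overline{X};\mathcal{N}(\Gamma))=\tfrac12\,\rho^{(2)}(\partial\overline{X};\mathcal{N}(\Gamma)), \]
so it remains to prove $\rho^{(2)}(\partial\overline{X};\mathcal{N}(\Gamma))=0$.

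Next I would show that each boundary stratum contributes zero. Fix a proper rational parabolic subgroup $\Pp\subset\Gg$. By \fullref{prop:closureofep} the space $\overline{e(\Pp)}=N_P\times\overline{X}_\Pp$ is contractible ($N_P$ is contractible and $\overline{X}_\Pp$ is by \fullref{cor:bordificationcontractible}), and since $\Gamma_P$ acts freely on it, the quotient $\overline{e'(\Pp)}=\Gamma_P\backslash\overline{e(\Pp)}$ is a finite model for $B\Gamma_P$. By \fullref{thm:gammaepfiberbundle} it is the total space of a fiber bundle over the finite connected complex $\Gamma_{M_\Pp}\backslash\overline{X}_\Pp$ with fiber the compact nilmanifold $F=\Gamma_{N_P}\backslash N_P$; here $\Gamma_{N_P}$ is a nontrivial finitely generated torsion-free nilpotent group, because $\Pp$ proper forces $N_P\neq\{1\}$. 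Being finitely generated nilpotent, $\Gamma_{N_P}$ is infinite, elementary amenable, and residually finite, so $F$ satisfies the hypotheses of \fullref{thm:propofcellularl2}\,(\ref{item:propofcellularl2:aspherical}) with $\Gamma_{N_P}$ itself as the infinite elementary amenable normal subgroup; hence $\widetilde{F}=N_P$ is $\det$-$L^2$-acyclic with $\rho^{(2)}(\widetilde{F})=0$. The fiber inclusion induces the subgroup inclusion $\Gamma_{N_P}\hookrightarrow\Gamma_P$ on fundamental groups, which is injective, so \fullref{thm:propofcellularl2}\,(\ref{item:propofcellularl2:fiberbundle}) applies and gives that $\overline{e(\Pp)}$ is $\det$-$L^2$-acyclic over $\mathcal{N}(\Gamma_P)$ with
\[ \rho^{(2)}(\overline{e(\Pp)};\mathcal{N}(\Gamma_P))=\chi(\Gamma_{M_\Pp}\backslash\overline{X}_\Pp)\cdot\rho^{(2)}(\widetilde{F})=0. \]
By the induction principle for $L^2$-torsion --- restriction along the subgroup $\Gamma_P\subset\Gamma$ preserves $\det$-$L^2$-acyclicity and $L^2$-torsion, exactly as the analogous principle for Novikov--Shubin invariants is used in the proof of \fullref{thm:novikovshubinqrankone} --- the induced $\Gamma$-CW complex $\Gamma\times_{\Gamma_P}\overline{e(\Pp)}$, which is the preimage of $\overline{e'(\Pp)}$ in $\partial\overline{X}$, is $\det$-$L^2$-acyclic over $\mathcal{N}(\Gamma)$ with vanishing $L^2$-torsion; and the same holds for every proper rational parabolic subgroup.

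Finally I would assemble $\partial\overline{X}$ from these strata. Choose representatives $\Pp_1,\dots,\Pp_s$ of the $\Gamma$-conjugacy classes of maximal proper rational parabolic subgroups of $\Gg$; then the closed subcomplexes $\overline{e'(\Pp_1)},\dots,\overline{e'(\Pp_s)}$ cover $\Gamma\backslash\partial\overline{X}$, and by \eqref{eqn:intersectionofboundarycomponents} each of their multiple intersections is a finite union of closures $\overline{e'(\mathbf{R})}$ of boundary components of proper rational parabolic subgroups $\mathbf{R}$. By the previous step each $\overline{e'(\mathbf{R})}$ arising this way is $\det$-$L^2$-acyclic over $\mathcal{N}(\Gamma)$ with vanishing $L^2$-torsion; since finite unions of such subcomplexes, and their intersections, are again finite unions of the same kind, a descending induction over the finite poset of $\Gamma$-conjugacy classes of rational parabolic subgroups shows the same for every member appearing in the inclusion--exclusion. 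Repeated application of the additivity of $L^2$-torsion for pushouts of finite free $\Gamma$-CW complexes (\fullref{lemma:l2torsionpushouts}) along this cover then shows that $\partial\overline{X}$ is $\det$-$L^2$-acyclic over $\mathcal{N}(\Gamma)$ with $\rho^{(2)}(\partial\overline{X};\mathcal{N}(\Gamma))=0$. Combined with the displayed identity this gives $\rho^{(2)}(\Gamma)=0$.

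I expect the last step to be the main obstacle. When $\rank_\Q(\Gg)>1$ the cover of $\partial\overline{X}$ by closures of boundary components is far from disjoint, so one must carefully organize the inclusion--exclusion over the (finite) poset of $\Gamma$-conjugacy classes of rational parabolic subgroups and check at every stage that the subcomplexes and their intersections remain $\det$-$L^2$-acyclic, so that \fullref{lemma:l2torsionpushouts} may legitimately be invoked. A clean way to package this is to establish once and for all that if a finite free $\Gamma$-CW complex is covered by subcomplexes all of whose nonempty finite intersections are $\det$-$L^2$-acyclic with vanishing $L^2$-torsion, then so is the complex itself; this is a routine induction from \fullref{lemma:l2torsionpushouts}, parallel to inclusion--exclusion for the Euler characteristic. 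Everything else is handed to us by the quoted results: the even dimension of $X$ links $\overline{X}$ to $\partial\overline{X}$ via \fullref{lemma:l2torsionmanifoldwithboundary}, and the nilpotent fiber of each boundary stratum kills its $L^2$-torsion via \fullref{thm:propofcellularl2}\,(\ref{item:propofcellularl2:fiberbundle})--(\ref{item:propofcellularl2:aspherical}).
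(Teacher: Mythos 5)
Your proposal is correct, and its skeleton is the same as the paper's: reduce to the boundary via even-dimensionality of $\overline{X}$ and \fullref{lemma:l2torsionmanifoldwithboundary}, kill each closed boundary component $\overline{e(\Pp)}$ by the nilmanifold bundle of \fullref{thm:gammaepfiberbundle} together with \fullref{thm:propofcellularl2}\,(\ref{item:propofcellularl2:fiberbundle})--(\ref{item:propofcellularl2:aspherical}) (this is exactly \fullref{prop:l2torsionofboundarycomponent}), transport the vanishing to $\mathcal{N}(\Gamma)$ by the induction principle, and assemble with \fullref{lemma:l2torsionpushouts}. Where you genuinely deviate is the assembly step. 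The paper filters $\partial\overline{X}$ by split rank: it forms the \emph{abstract} coproducts $\overline{Y}_k=\coprod_{\textup{s-rank}(\Pp)=k}\overline{e(\Pp)}\cong\coprod_i\Gamma\times_{\Gamma_{P_i^k}}\overline{e(\Pp_i^k)}$, glues them onto $\overline{X}_{k+1}$ by a single pushout \eqref{eq:pushout} per level, and pays for this with an extra input: it must show that $\partial\overline{Y}_k$ is $\det$-$L^2$-acyclic with vanishing torsion, which it does via Poincar\'e duality (\fullref{thm:propofcellularl2}\,(\ref{item:propofcellularl2:poincare})) and \fullref{lemma:l2torsionmanifoldwithboundary} applied to the manifolds with corners $\overline{Y}_k$. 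You instead run an inclusion--exclusion over the finite poset of $\Gamma$-conjugacy classes of rational parabolic subgroups, using \eqref{eqn:intersectionofboundarycomponents} so that every intersection of orbit pieces is again a finite union of orbit pieces of strictly larger split rank; since split rank is bounded by $\rank_\Q(\Gg)$ and the pieces of maximal split rank are pairwise disjoint, the descending induction closes up using only \fullref{lemma:l2torsionpushouts}. This buys you an assembly that needs no Poincar\'e duality and no control of $\partial\overline{Y}_k$, at the cost of multi-fold intersections and heavier bookkeeping; the paper's filtration keeps every gluing to one pushout and never has to intersect two closures of the same split rank. One point you should make explicit, since your induction principle step and the disjointness underlying the inclusion--exclusion rest on it: the preimage of $\overline{e'(\Pp)}$ in $\partial\overline{X}$ really is the induced complex $\Gamma\times_{\Gamma_P}\overline{e(\Pp)}$, i.e.\ distinct $\Gamma$-translates of $\overline{e(\Pp)}$ are disjoint. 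This holds because two \emph{distinct} $\Gg(\Q)$-conjugate rational parabolic subgroups never intersect in a rational parabolic subgroup (if $\Pp\cap{}^{\gamma}\Pp$ were parabolic, both would contain a common minimal rational parabolic, hence both would be standard for it, and conjugate standard parabolics coincide by the uniqueness recalled in \fullref{sec:rationalparabolicsubgroups}; self-normalization then gives $\gamma\in\Gamma_P$), so \eqref{eqn:intersectionofboundarycomponents} produces no overlaps inside a single $\Gamma$-orbit. With that sentence added, and with the minor observation that $\Gamma_{N_P}$ is infinite because it is a nontrivial torsion-free (indeed cocompact) lattice in the positive-dimensional group $N_P$, your argument is complete.
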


Note that in the odd deficiency case, Borel and Serre have proved correspondingly that \(\chi(\Gamma) = 0\) in \cite{Borel-Serre:Corners}*{Proposition 11.3, p.\,482}.  The core idea will also prove successful for the proof of \fullref{thm:l2torsionlattices} though various technical difficulties arise owed to the considerably more complicated definition of \(L^2\)-torsion.  A combinatorial argument will reduce the calculation of the \(L^2\)-torsion of \(\overline{X} = \bigcup_{\Pp \subseteq \Gg} \overline{e(\Pp)}\) to the calculation of the \(L^2\)-torsion of the manifolds with corners \(\overline{e(\Pp)}\) for proper rational parabolic subgroups \(\Pp \subset \Gg\) which form the boundary \(\partial \overline{X}\) of the bordification.  This in turn is settled by the following proposition.

\begin{proposition} \label{prop:l2torsionofboundarycomponent}
Let \(\Pp \subset \Gg\) be a proper rational parabolic subgroup.  Then for every torsion-free arithmetic subgroup \(\Gamma \subset \Gg(\Q)\) the finite free \(\Gamma_P\)-CW complex \(\overline{e(\Pp)} \subset \overline{X}\) is \(\det\)-\(L^2\)-acyclic and \(\rho^{(2)}(\overline{e(\Pp)}; \mathcal{N}(\Gamma_P)) = 0\).
\end{proposition}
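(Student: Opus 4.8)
The plan is to realize the quotient $\overline{e'(\Pp)} = \Gamma_P\backslash\overline{e(\Pp)}$ as the total space of a fibre bundle with nilmanifold fibre and then invoke the fibre bundle formula for $L^2$-torsion, \fullref{thm:propofcellularl2}\,(\ref{item:propofcellularl2:fiberbundle}). Since $\det$-$L^2$-acyclicity and the value of $\rho^{(2)}$ are tracked in a controlled way along finite coverings, and since every arithmetic $\Gamma$ contains a neat subgroup $\Gamma'$ of finite index, which induces the finite index subgroup $\Gamma'_P = \Gamma'\cap\Gamma_P$ of $\Gamma_P$ acting on the same space $\overline{e(\Pp)}$, I would first reduce to the case that $\Gamma$ itself is neat: vanishing of $\rho^{(2)}(\overline{e(\Pp)};\mathcal{N}(\Gamma'_P))$ together with $\det$-$L^2$-acyclicity forces the same for $\mathcal{N}(\Gamma_P)$, as $L^2$-Betti numbers scale by the index, the determinant condition descends, and $L^2$-torsion scales by the index under restriction. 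Once $\Gamma$ is neat, \fullref{thm:gammaepfiberbundle} supplies a real analytic fibre bundle $\Gamma_{N_P}\backslash N_P \to \overline{e'(\Pp)} \to \Gamma_{M_\Pp}\backslash\overline{X}_\Pp$ of compact manifolds with corners, which after choosing compatible triangulations becomes a fibre bundle $F \to E \to B$ of connected finite CW complexes (connectedness is automatic: $N_P$ is connected and $\overline{X}_\Pp$ is contractible by \fullref{cor:bordificationcontractible}, so $\overline{e(\Pp)} = N_P\times\overline{X}_\Pp$ and its quotients are connected).

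Next I would show that the fibre contributes trivial $L^2$-torsion. The fibre $F = \Gamma_{N_P}\backslash N_P$ is a closed nilmanifold, hence a finite CW complex with contractible universal covering $\widetilde F = N_P$. Its fundamental group $\Gamma_{N_P} = \Gamma\cap N_P$ is a cocompact lattice in the simply connected nilpotent Lie group $N_P$, which is nontrivial because $\Pp$ is a proper parabolic; thus $\Gamma_{N_P}$ is an infinite, finitely generated, torsion-free nilpotent group. In particular it is residually finite, hence of $\det\geq 1$-class, and it is elementary amenable, so \fullref{thm:propofcellularl2}\,(\ref{item:propofcellularl2:aspherical}) — applied with $\Gamma_{N_P}$ itself as the required infinite elementary amenable normal subgroup — yields that $\widetilde F = N_P$ is $\det$-$L^2$-acyclic and $\rho^{(2)}(N_P;\mathcal{N}(\Gamma_{N_P})) = 0$.

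It then remains to verify the $\pi_1$-injectivity hypothesis of the fibre bundle formula. For this I would observe that, for a chosen point $x\in\overline{X}_\Pp$, the subspace $N_P\times\{x\}\subset\overline{e(\Pp)} = N_P\times\overline{X}_\Pp$ is simply connected and, by \fullref{prop:actiononoverlineep}, invariant under the subgroup $\Gamma_{N_P}\leq\Gamma_P$, with $\Gamma_{N_P}\backslash(N_P\times\{x\})$ equal to the fibre $F$ over the image of $x$. Hence \fullref{lemma:injectivefundamentalgroup} identifies $\pi_1(F) = \Gamma_{N_P}$ with a subgroup of $\pi_1(E) = \pi_1(\overline{e'(\Pp)}) = \Gamma_P$, the last equality because $\overline{e(\Pp)}$ is contractible and $\Gamma_P$ acts freely and properly. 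With all hypotheses checked, \fullref{thm:propofcellularl2}\,(\ref{item:propofcellularl2:fiberbundle}) gives that $\widetilde E = \overline{e(\Pp)}$ is $\det$-$L^2$-acyclic and
\[ \rho^{(2)}\bigl(\overline{e(\Pp)};\mathcal{N}(\Gamma_P)\bigr) = \chi\bigl(\Gamma_{M_\Pp}\backslash\overline{X}_\Pp\bigr)\cdot\rho^{(2)}\bigl(N_P;\mathcal{N}(\Gamma_{N_P})\bigr) = 0, \]
which is the assertion (for neat $\Gamma$, and hence, by the first paragraph, in general).

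The conceptual steps — nilmanifolds are $\det$-$L^2$-acyclic with vanishing $L^2$-torsion, and the $L^2$-torsion of a bundle is $\chi$ of the base times $\rho^{(2)}$ of the fibre — are immediate from the quoted results, so the genuine work is bookkeeping. I expect the main obstacle to be of exactly this flavour, in two places: first, making the reduction to neat $\Gamma$ rigorous by tracking $\det$-$L^2$-acyclicity and the scaling of $\rho^{(2)}$ along the finite covering $\Gamma'_P\backslash\overline{e(\Pp)}\to\Gamma_P\backslash\overline{e(\Pp)}$; and second, upgrading the analytic fibre bundle of compact manifolds with corners of \fullref{thm:gammaepfiberbundle} to an honest fibre bundle of finite CW complexes so that \fullref{thm:propofcellularl2}\,(\ref{item:propofcellularl2:fiberbundle}) applies verbatim. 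Both are routine given that base, total space, and fibre are compact, but this is where care is needed.
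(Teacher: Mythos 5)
Your proposal is correct and follows essentially the same route as the paper's proof: reduce to neat $\Gamma$ by multiplicativity of $L^2$-torsion under finite coverings, apply \fullref{thm:propofcellularl2}\,(\ref{item:propofcellularl2:aspherical}) to the nilmanifold fiber $\Gamma_{N_P}\backslash N_P$, verify $\pi_1$-injectivity of the fiber inclusion via \fullref{lemma:injectivefundamentalgroup}, and conclude with the fiber bundle formula \fullref{thm:propofcellularl2}\,(\ref{item:propofcellularl2:fiberbundle}) applied to the bundle of \fullref{thm:gammaepfiberbundle}. The bookkeeping points you flag (the neatness reduction and treating the bundle of compact manifolds with corners as a bundle of finite CW complexes) are handled in the paper at exactly the level of detail you indicate.
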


\begin{proof}
\(L^2\)-torsion is multiplicative under finite coverings \cite{Lueck:L2-Invariants}*{Theorem~3.96(5), p.\,164} so that similar to the proof of \fullref{prop:novikovshubinofboundarycomponent}, we may assume that \(\Gamma\) is neat.  We have already remarked below \fullref{thm:borelserrecompact} that \(e(\Pp)\), hence its closure \(\overline{e(\Pp)}\), is a \(\Gamma_P\)-invariant subspace of the bordification \(\overline{X}\).  So \(\overline{e(\Pp)}\) regularly covers the subcomplex \(\overline{e'(\Pp)}\) of \(\Gamma \backslash \overline{X}\) with deck transformation group \(\Gamma_P\).  It thus is a finite free \(\Gamma_P\)-CW complex.  In fact \(\overline{e(\Pp)}\) is simply connected so that it can be identified with the universal covering of \(\overline{e'(\Pp)}\).  The nilpotent group \(\Gamma_{N_P}\) is elementary amenable and therefore of \(\det \geq 1\)-class \cite{Schick:L2-Determinant}.  It is moreover infinite because it acts cocompactly on the nilpotent Lie group \(N_P\).  This Lie group is diffeomorphic to a nonzero Euclidean space because \(\Pp \subset \Gg\) is proper.  By \fullref{thm:propofcellularl2}\,(\ref{item:propofcellularl2:aspherical}) the universal cover \(N_P\) of the finite CW-complex \(\Gamma_{N_P} \backslash N_P\) is \(L^2\)-acyclic and \(\rho^{(2)}(N_P; \mathcal{N}(\Gamma_{N_P})) = 0\).  The canonical base point \(K_P \in \overline{X}_\Pp\) and \fullref{prop:closureofep} define an inclusion \(N_P \subset \overline{e(\Pp)}\).  Applying \fullref{lemma:injectivefundamentalgroup} to \(N_P \subset \overline{e(\Pp)}\) and \(\Gamma_{N_P} \subset \Gamma_P\) shows that the fiber bundle \(\overline{e'(\Pp)}\) of \fullref{thm:gammaepfiberbundle} satisfies the conditions of \fullref{thm:propofcellularl2}\,(\ref{item:propofcellularl2:fiberbundle}).  We conclude that \(\overline{e(\Pp)}\) is \(\det\)-\(L^2\)-acyclic and
\[ \rho^{(2)}(\overline{e(\Pp)}, \mathcal{N}(\Gamma_P)) = \chi(\Gamma_{M_\Pp} \backslash \overline{X}_\Pp) \,\rho^{(2)}(N_P; \mathcal{N}(\Gamma_{N_P})) = 0. \proved \]
\end{proof}

\begin{proof}[Proof of \fullref{thm:l2torsionlattices}]
Fix a torsion-free arithmetic subgroup \(\Gamma \subset \Gg(\Q)\).  As remarked, the bordification \(\overline{X}\) is \(\det\)-\(L^2\)-acyclic by \fullref{thm:l2bettiofgenerallattices} because \(\delta(G) > 0\).  Since \(\overline{X}\) is even-dimensional, \fullref{lemma:l2torsionmanifoldwithboundary} says that the boundary \(\partial \overline{X}\) is \(\det\)-\(L^2\)-acyclic and we have proven the theorem when we can show \(\rho^{(2)}(\partial \overline{X}; \mathcal{N}(\Gamma)) = 0\).  To this end consider the space \(Y_k = \coprod_{\textup{s-rank}(\Pp) = k} e(\Pp)\) for \(k = 1, \ldots, \rank_\Q(\Gg)\), the coproduct of all boundary components \(e(\Pp)\) of rational parabolic subgroups \(\Pp \subset \Gg\) with split rank \(k\).  The usual action given in \eqref{eqn:actiononbordification} defines a free proper action of \(\Gamma\) on \(Y_k\) because the split rank of a rational parabolic subgroup is invariant under conjugation with elements in \(\Gg(\Q)\).  This action extends uniquely to a free proper action on the coproduct \(\overline{Y_k} = \coprod_{\textup{s-rank}(\Pp) = k} \overline{e(\Pp)}\) of closed boundary components because \(Y_k \subset \overline{Y_k}\) is dense.  The canonical \(\Gamma\)-equivariant map \(\overline{Y_k} \rightarrow \overline{X}\) lies in the pullback diagram
\[
\begin{xy}
\xymatrix{
\overline{Y_k} \ar[r] \ar[d] & \overline{X} \ar[d] \\
\Gamma \backslash \overline{Y_k} \ar[r] & \Gamma \backslash \overline{X}.
}
\end{xy}
\]
By \fullref{prop:decompofborelserre}, we have a finite system of representatives of \(\Gamma\)-conjugacy classes of rational parabolic subgroups of \(\Gg\).  Let \(\Pp_1^k, \ldots, \Pp_{r_k}^k\) be an ordering of the subsystem of rational parabolic subgroups with split rank \(k\).  Then \(\Gamma \backslash \overline{Y_k} = \coprod_{i=1}^{r_k} \overline{e'(\Pp_i^k)}\).  We apply \fullref{lemma:injectivefundamentalgroup} to each inclusion \(\overline{e(\Pp_i^k)} \subset \overline{X}\) and \(\Gamma_{P_i^k} \leq \Gamma\) to conclude that \(\overline{Y_k} = \coprod_{i=1}^{r_k} \overline{e(\Pp_i^k)} \times_{\Gamma_{\Pp_i^k}} \Gamma\).  Since every space \(\overline{e(\Pp_i^k)} \times_{\Gamma_{\Pp_i^k}} \Gamma\) is a \(\Gamma\)-invariant subcomplex of \(\partial \overline{X}\), this endows \(\overline{Y_k}\) with the structure of a finite free \(\Gamma\)-CW complex such that the equivariant map \(\overline{Y_k} \rightarrow \partial \overline{X}\) is cellular.  By the induction principle for \(L^2\)-torsion \cite{Lueck:L2-Invariants}*{Theorem~3.93(6) p.\,162} and \fullref{prop:l2torsionofboundarycomponent} \(\overline{Y_k}\) is \(\det\)-\(L^2\)-acyclic and
\[ \rho^{(2)}(\overline{Y_k}; \mathcal{N}(\Gamma)) = \sum\limits_{i=1}^{r_k} \rho^{(2)}(\overline{e(\Pp_i^k)} \times_{\Gamma_{\!P_i^k}} \!\Gamma; \mathcal{N}(\Gamma)) = \sum\limits_{i=1}^{r_k} \rho^{(2)} (\overline{e(\Pp_i^k)}; \mathcal{N}(\Gamma_{P_i^k})) = 0. \]
From \fullref{thm:propofcellularl2}\,(\ref{item:propofcellularl2:poincare}) we obtain that also \((\overline{Y}_k, \partial \overline{Y}_k)\) is \(\det\)-\(L^2\)-acyclic, so that the boundary \(\partial \overline{Y_k} = \overline{Y_k} \setminus Y_k\) is \(\det\)-\(L^2\)-acyclic by \cite{Lueck:L2-Invariants}*{Theorem 1.21, p.\,27}.  \fullref{lemma:l2torsionmanifoldwithboundary} says moreover that \(\rho^{(2)}(\partial \overline{Y_k}; \mathcal{N}(\Gamma)) = 0\) if \(\overline{Y_k}\) is even-dimensional.  But the same is true if \(\overline{Y_k}\) is odd-dimensional because of \fullref{thm:propofcellularl2}\,(\ref{item:propofcellularl2:poincare}).  Consider the \(\Gamma\)-CW subcomplexes \(\overline{X_k} = \bigcupdot_{\textup{s-rank}(\Pp) \geq k} \,e(\Pp)\) of \(\overline{X}\) where \(k = 1, \ldots, \rank_\Q(\Gg)\).  It follows from \eqref{eqn:intersectionofboundarycomponents} that they can be constructed inductively as pushouts of finite free \(\Gamma\)-CW complexes
\begin{equation}
\label{eq:pushout}
\begin{xy}
\xymatrix{
\partial \overline{Y_k} \ar[r] \ar[d] & \overline{X}_{k+1} \ar[d] \\
\overline{Y_k} \ar[r] & \overline{X}_k.
}
\end{xy}
\end{equation}
The beginning of the induction is the disjoint union \(\overline{X}_{\rank_\Q(\Gg)} = \bigcupdot_{\Pp \text{ min.}} \,e(\Pp)\) within \(\overline{X}\).  Since \(e(\Pp)\) is closed if \(\Pp\) is minimal, we observe as in the proof of \fullref{thm:novikovshubinqrankone} that in fact \(\overline{X}_{\rank_\Q(\Gg)} = \coprod_{\Pp \text{ min.}} \overline{e(\Pp)} = \overline{Y}_{\rank_\Q \Gg}\).  Therefore \fullref{lemma:l2torsionpushouts} verifies that each \(\overline{X_k}\) is \(\det\)-\(L^2\)-acyclic and \(\rho^{(2)}(\overline{X_k}; \mathcal{N}(\Gamma)) = 0\).  This proves the theorem because \(\overline{X_1} = \partial \overline{X}\).
\end{proof}

A group \(\Lambda\) has \emph{type \textit{F}}, if it possesses a finite CW model for \(B \Lambda\).  The Euler characteristic of a type \textit{F} group is defined by \(\chi(\Lambda) = \chi(B \Lambda)\).  A slight generalization of this is due to C.\,T.\,C.\,Wall \cite{Wall:RationalEuler}.  If \(\Lambda\) virtually has type \textit{F}, its \emph{virtual Euler characteristic} is given by \(\chi_{\textup{virt}}(\Lambda) = \frac{\chi(\Lambda')}{[\Lambda : \Lambda']}\) for a finite index subgroup \(\Lambda'\) with finite CW model for \(B\Lambda'\).  This is well-defined because the Euler characteristic is multiplicative under finite coverings.  Since the \(L^2\)-torsion in many respects behaves like an odd-dimensional Euler characteristic, we want to define its virtual version as well.  If a group \(\Gamma\) is virtually \(\det\)-\(L^2\)-acyclic, we define \(\rho^{(2)}_{\textup{virt}}(\Gamma) = \frac{\rho^{(2)}(\Gamma')}{[\Gamma : \Gamma']}\) for a finite index subgroup \(\Gamma'\) with finite \(\det\)-\(L^2\)-acyclic \(\Gamma'\)-CW model for \(E\Gamma'\).  Again this is well-defined because \(\rho^{(2)}\) is multiplicative under finite coverings.

\begin{lemma} \label{lemma:l2torsionproductformula}
Let \(\Lambda\) be virtually of type \textit{F} and let \(\Gamma\) be virtually \(\det\)-\(L^2\)-acyclic.  Then \(\Lambda \times \Gamma\) is virtually \(\det\)-\(L^2\)-acyclic and
\[ \rho^{(2)}_{\textup{virt}}(\Lambda \times \Gamma) = \chi_{\textup{virt}}(\Lambda) \cdot \rho^{(2)}_{\textup{virt}}(\Gamma). \]
\end{lemma}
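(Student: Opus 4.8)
The plan is to reduce the statement to the non-virtual fiber bundle formula for \(L^2\)-torsion, \fullref{thm:propofcellularl2}\,(\ref{item:propofcellularl2:fiberbundle}), applied to a trivial product bundle. First I would fix witnesses for the two hypotheses: a finite index subgroup \(\Lambda' \leq \Lambda\) admitting a finite \(CW\)-model for \(B\Lambda'\), and a finite index subgroup \(\Gamma' \leq \Gamma\) that is \(\det\)-\(L^2\)-acyclic together with a finite \(\det\)-\(L^2\)-acyclic \(\Gamma'\)-\(CW\)-model for \(E\Gamma'\); equivalently \(B\Gamma'\) is a finite \(CW\)-complex whose universal cover \(E\Gamma' = \widetilde{B\Gamma'}\) is \(\det\)-\(L^2\)-acyclic. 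Then \(H := \Lambda' \times \Gamma'\) has finite index in \(\Lambda \times \Gamma\) with \([\Lambda \times \Gamma : H] = [\Lambda : \Lambda']\,[\Gamma : \Gamma']\), and the product \(CW\)-structure makes \(B\Lambda' \times B\Gamma'\) a finite \(CW\)-model for \(BH\), so that \(E\Lambda' \times E\Gamma'\) is a finite free \(H\)-\(CW\)-complex which is contractible, i.e.\ a model for \(EH\).

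Next I would feed the projection onto the first factor into the fiber bundle formula. It exhibits \(B\Lambda' \times B\Gamma'\) as a (trivial) fiber bundle \(B\Gamma' \rightarrow B\Lambda' \times B\Gamma' \rightarrow B\Lambda'\) of connected finite \(CW\)-complexes, and the inclusion of one fiber induces the injection \(\Gamma' \hookrightarrow \Lambda' \times \Gamma'\) on fundamental groups. Since \(\widetilde{B\Gamma'}\) is \(\det\)-\(L^2\)-acyclic by the choice of \(\Gamma'\), \fullref{thm:propofcellularl2}\,(\ref{item:propofcellularl2:fiberbundle}) shows that \(EH = E\Lambda' \times E\Gamma'\) is \(\det\)-\(L^2\)-acyclic and
\[ \rho^{(2)}(H) = \chi(B\Lambda')\cdot\rho^{(2)}(\widetilde{B\Gamma'}) = \chi(\Lambda')\cdot\rho^{(2)}(\Gamma'). \]
In particular \(\Lambda \times \Gamma\) is virtually \(\det\)-\(L^2\)-acyclic, with \(H\) serving as the required finite index subgroup.

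Finally, dividing by the index and using that \(\chi\) and \(\rho^{(2)}\) are multiplicative under finite coverings — which is precisely what makes \(\chi_{\textup{virt}}\) and \(\rho^{(2)}_{\textup{virt}}\) well-defined, so the particular choice of \(H\) is irrelevant — I obtain
\[ \rho^{(2)}_{\textup{virt}}(\Lambda \times \Gamma) = \frac{\rho^{(2)}(H)}{[\Lambda \times \Gamma : H]} = \frac{\chi(\Lambda')}{[\Lambda : \Lambda']}\cdot\frac{\rho^{(2)}(\Gamma')}{[\Gamma : \Gamma']} = \chi_{\textup{virt}}(\Lambda)\cdot\rho^{(2)}_{\textup{virt}}(\Gamma). \]
The argument is essentially bookkeeping; the only points needing a little care are the verification of the hypotheses of the fiber bundle formula (connectedness and finiteness of the spaces and \(\pi_1\)-injectivity of the fiber inclusion, all immediate for a product) and the observation that the product \(CW\)-structure on \(B\Lambda' \times B\Gamma'\) is again finite, so that \(E\Lambda' \times E\Gamma'\) is genuinely a finite free \(H\)-\(CW\)-complex. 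I do not expect a serious obstacle here.
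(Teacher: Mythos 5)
Your proposal is correct and follows exactly the paper's argument: pick finite index subgroups \(\Lambda' \leq \Lambda\) and \(\Gamma' \leq \Gamma\) with finite classifying spaces and \(E\Gamma'\) \(\det\)-\(L^2\)-acyclic, apply the fiber bundle formula to the trivial bundle \(B\Gamma' \rightarrow B\Lambda' \times B\Gamma' \rightarrow B\Lambda'\), and divide by the index using multiplicativity under finite coverings. The extra bookkeeping you spell out (finiteness of the product CW-structure, \(\pi_1\)-injectivity of the fiber inclusion) is exactly what the paper leaves implicit.
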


\begin{proof}
Let \(\Lambda' \leq \Lambda\) and \(\Gamma' \leq \Gamma\) be finite index subgroups with finite classifying spaces such that \(E\Gamma'\) is \(\det\)-\(L^2\)-acyclic and apply \fullref{thm:propofcellularl2}\,(\ref{item:propofcellularl2:fiberbundle}) to the trivial fiber bundle \(B\Gamma' \rightarrow B(\Lambda' \times \Gamma') = B\Lambda' \times B\Gamma' \rightarrow B\Lambda'\).
\end{proof}

\medskip
{\bf \fullref{thm:virtuall2torsion}}\qua
{\sl Let \(G\) be a connected semisimple linear Lie group with positive, even deficiency.  Then every lattice \(\Gamma \subset G\) is virtually \(\det\)-\(L^2\)-acyclic and}
\[ \rho^{(2)}_{\textup{virt}}(\Gamma) = 0. \]

\begin{proof}
By Selberg's Lemma there exists a finite index subgroup \(\Gamma' \subset \Gamma\) which is torsion-free.  Thus \(\Gamma'\) can neither meet any compact factor nor the center of \(G\) which is finite because \(G\) is linear.  Therefore we may assume that \(G\) has trivial center and no compact factors.  Suppose \(\Gamma'\) was reducible.  By \cite{WitteMorris:ArithGroups}*{Proposition~4.24, p.\,48} we have a direct product decomposition \(G = G_1 \times \cdots \times G_r\) with \(r \geq 2\) such that \(\Gamma'\) is commensurable with \(\Gamma'_1 \times \cdots \times \Gamma'_r\) where \(\Gamma'_i = G_i \cap \Gamma'\) is irreducible in \(G_i\) for each \(i\).  Again by Selberg's Lemma we may assume that \(\Gamma'_1 \times \cdots \times \Gamma'_r\) is torsion-free.  If \(\rank_\R(G_i) = 1\), then \(\Gamma_i\) is type \textit{F}, for example by a compactification of H.\,Kang \cite{Kang:Cofinite}.  If \(\rank_\R(G_i) > 1\), then \(\Gamma_i\) is virtually type \textit{F} by Margulis arithmeticity, \fullref{thm:margulisarithmeticity}, and the Borel-Serre compactification.  Therefore, and by \fullref{thm:l2bettiofgenerallattices}, \(\Gamma'_1 \times \cdots \times \Gamma'_r\) and thus \(\Gamma\) is virtually \(\det\)-\(L^2\)-acyclic.  Thus we may assume that \(\Gamma'_1 \times \cdots \times \Gamma'_r\) is honestly \(\det\)-\(L^2\)-acyclic and we have to show that \(\rho^{(2)}(\Gamma'_1 \times \cdots \times \Gamma'_r) = 0\).

Since \(\delta(G) > 0\), there must be a factor \(G_{i_0}\) with \(\delta(G_{i_0}) > 0\).  Let \(H\) be the product of the remaining factors \(G_i\) and let \(\Gamma_H\) be the product of the corresponding irreducible lattices \(\Gamma_i\).  If \(\delta(H) > 0\), then \(\Gamma_H\) is \(\det\)-\(L^2\)-acyclic by \fullref{thm:l2bettiofgenerallattices} and \(\rho^{(2)}(\Gamma'_1 \times \cdots \times \Gamma'_r) = \rho^{(2)}(\Gamma'_{i_0} \times \Gamma_H) = 0\) by \fullref{lemma:l2torsionproductformula} because \(\chi(\Gamma'_{i_0}) = 0\) by \fullref{thm:propofcellularl2}\,(\ref{item:propofcellularl2:fiberbundle}).  If \(\delta(H) = 0\), then \(\delta(G_{i_0})\) is even, and \fullref{lemma:l2torsionproductformula} says that \(\rho^{(2)}(\Gamma_H \times \Gamma'_{i_0} ) = \chi(\Gamma_H) \rho^{(2)}(\Gamma'_{i_0})\).  So we may assume that the original \(\Gamma'\) was irreducible.  We have \(\rank_\R(G) \geq \delta(G) \geq 2\) as follows from \cite{Borel-Wallach:ContinuousCohomology}*{Section III.4, Formula (3), p.\,99}.  By Margulis arithmeticity, \fullref{thm:margulisarithmeticity}, \(\Gamma'\) is abstractly commensurable to \(\Hh(\Z)\) for a connected semisimple linear algebraic \(\Q\)-group \(\Hh\).  Moreover \(\delta(\Hh(\R)) = \delta(G)\) because \(\Hh(\R)\) and \(G\) define isometric symmetric spaces.  \fullref{thm:l2torsionlattices} completes the proof.
\end{proof}

It remains to give some details for our application to the L\"uck--Sauer--Wegner conjecture.

\medskip
{\bf \fullref{thm:l2torsionmeasureequivalence}}\qua
{\sl Let \(\mathcal{L}^{\textup{even}}\) be the class of \(\det\)-\(L^2\)-acyclic groups that are measure equivalent to a lattice in a connected simple linear Lie group with even deficiency.  Then \textup{\fullref{conj:luecksauerwegner}} holds true for \(\mathcal{L}^{\textup{even}}\).}

\begin{proof}
Let \(\Gamma \in \mathcal{L}^{\textup{even}}\) be measure equivalent to \(\Lambda \subset G\) with G as stated.  Then \(\delta(G) > 0\) by \cite{Gaboriau:Invariantsl2}*{Th\'eor\`eme 6.3, p.\,95} because \(\Gamma\) is \(L^2\)-acyclic by assumption.  Since \(\Gamma\) has a finite \(B\Gamma\), it is of necessity torsion-free so that \(\Gamma\) is a lattice in \(\operatorname{Ad} G\) by \cite{Furman:MERigidity}*{Theorem~3.1, p.\,1062}.  \fullref{thm:virtuall2torsion} applied to \(\Gamma \subset \operatorname{Ad} G\) completes the proof.
\end{proof}

%
%
%
%

\begin{biblist}
\begin{bibdiv}

\bib{Adem-Ruan:TwistedOrbifold}{article}{
   author={Adem, Alejandro},
   author={Ruan, Yongbin},
   title={Twisted orbifold $K$-theory},
   journal={Comm. Math. Phys.},
   volume={237},
   date={2003},
   number={3},
   pages={533--556},
   issn={0010-3616},
   review={\MRref{1993337}{2004e:19004}},
}

\bib{Alperin:SelbergsLemma}{article}{
   author={Alperin, Roger C.},
   title={An elementary account of Selberg's lemma},
   journal={Enseign. Math. (2)},
   volume={33},
   date={1987},
   number={3-4},
   pages={269--273},
   issn={0013-8584},
   review={\MRref{925989}{89f:20051}},
}

\bib{Borel:IntroductionAuxGroupes}{book}{
   author={Borel, Armand},
   title={Introduction aux groupes arithm\'etiques},
   language={French},
   series={Publications de l'Institut de Math\'ematique de l'Universit\'e de
   Strasbourg, XV. Actualit\'es Scientifiques et Industrielles, No. 1341},
   publisher={Hermann},
   place={Paris},
   date={1969},
   pages={125},
   review={\MRref{0244260}{39 \#5577}},
}

\bib{Borel:L2-cohomology}{article}{
   author={Borel, Armand},
   title={The $L^2$-cohomology of negatively curved Riemannian symmetric
   spaces},
   journal={Ann. Acad. Sci. Fenn. Ser. A I Math.},
   volume={10},
   date={1985},
   pages={95--105},
   issn={0066-1953},
   review={\MRref{802471}{87f:22013}},
}

\bib{Borel:LinearAlgebraicGroups}{book}{
   author={Borel, Armand},
   title={Linear algebraic groups},
   series={Graduate Texts in Mathematics},
   volume={126},
   edition={2},
   publisher={Springer-Verlag},
   place={New York},
   date={1991},
   pages={xii+288},
   isbn={0-387-97370-2},
   review={\MRref{1102012}{92d:20001}},
}

\bib{Borel-Harish-Chandra:Arithmetic}{article}{
   author={Borel, Armand},
   author={Harish-Chandra},
   title={Arithmetic subgroups of algebraic groups},
   journal={Ann. of Math. (2)},
   volume={75},
   date={1962},
   pages={485--535},
   issn={0003-486X},
   review={\MRref{0147566}{26 \#5081}},
}

\bib{Borel-Ji:Compactifications}{book}{
   author={Borel, Armand},
   author={Ji, Lizhen},
   title={Compactifications of symmetric and locally symmetric spaces},
   series={Mathematics: Theory \& Applications},
   publisher={Birkh\"auser Boston Inc.},
   place={Boston, MA},
   date={2006},
   pages={xvi+479},
   isbn={978-0-8176-3247-2},
   isbn={0-8176-3247-6},
   review={\MRref{2189882}{2007d:22030}},
}

\bib{Borel-Ji:CompactificationsDiffGeo}{article}{
   author={Borel, Armand},
   author={Ji, Lizhen},
   title={Compactifications of locally symmetric spaces},
   journal={J. Differential Geom.},
   volume={73},
   date={2006},
   number={2},
   pages={263--317},
   issn={0022-040X},
   review={\MRref{2226955}{2007d:22031}},
}

\bib{Borel-Serre:Corners}{article}{
   author={Borel, A.},
   author={Serre, J.-P.},
   title={Corners and arithmetic groups},
   note={Avec un appendice: Arrondissement des vari\'et\'es \`a coins, par
   A. Douady et L. H\'erault},
   journal={Comment. Math. Helv.},
   volume={48},
   date={1973},
   pages={436--491},
   issn={0010-2571},
   review={\MRref{0387495}{52 \#8337}},
}

\bib{Borel-Wallach:ContinuousCohomology}{book}{
   author={Borel, Armand},
   author={Wallach, Nolan R.},
   title={Continuous cohomology, discrete subgroups, and representations of
   reductive groups},
   series={Annals of Mathematics Studies},
   volume={94},
   publisher={Princeton University Press},
   place={Princeton, N.J.},
   date={1980},
   pages={xvii+388},
   isbn={0-691-08248-0},
   isbn={0-691-08249-9},
   review={\MRref{554917}{83c:22018}},
}

\bib{Burghelea-et-al:Torsion}{article}{
   author={Burghelea, D.},
   author={Friedlander, L.},
   author={Kappeler, T.},
   author={McDonald, P.},
   title={Analytic and Reidemeister torsion for representations in finite
   type Hilbert modules},
   journal={Geom. Funct. Anal.},
   volume={6},
   date={1996},
   number={5},
   pages={751--859},
   issn={1016-443X},
   review={\MRref{1415762}{97i:58177}},
   doi={10.1007/BF02246786},
}

\bib{Cheeger-Gromov:BoundsVonNeumann}{article}{
   author={Cheeger, Jeff},
   author={Gromov, Mikhael},
   title={Bounds on the von Neumann dimension of $L^2$-cohomology and the
   Gauss-Bonnet theorem for open manifolds},
   journal={J. Differential Geom.},
   volume={21},
   date={1985},
   number={1},
   pages={1--34},
   issn={0022-040X},
   review={\MRref{806699}{87d:58136}},
}

\bib{Cheeger-Gromov:L2-cohomology}{article}{
   author={Cheeger, Jeff},
   author={Gromov, Mikhael},
   title={$L_2$-cohomology and group cohomology},
   journal={Topology},
   volume={25},
   date={1986},
   number={2},
   pages={189--215},
   issn={0040-9383},
   review={\MRref{837621}{87i:58161}},
   doi={10.1016/0040-9383(86)90039-X},
}

\bib{Dodziuk:DeRhamHodge}{article}{
   author={Dodziuk, Jozef},
   title={de Rham-Hodge theory for $L^{2}$-cohomology of infinite
   coverings},
   journal={Topology},
   volume={16},
   date={1977},
   number={2},
   pages={157--165},
   issn={0040-9383},
   review={\MRref{0445560}{56 \#3898}},
}

\bib{Efremov:Novikov}{article}{
   author={Efremov, A. V.},
   title={Cell decompositions and the Novikov-Shubin invariants},
   language={Russian},
   journal={Uspekhi Mat. Nauk},
   volume={46},
   date={1991},
   number={3(279)},
   pages={189--190},
   issn={0042-1316},
   translation={
      journal={Russian Math. Surveys},
      volume={46},
      date={1991},
      number={3},
      pages={219--220},
      issn={0036-0279},
   },
   review={\MRref{1134099}{92m:58143}},
   doi={10.1070/RM1991v046n03ABEH002800},
}

\bib{Furman:MERigidity}{article}{
   author={Furman, Alex},
   title={Gromov's measure equivalence and rigidity of higher rank lattices},
   journal={Ann. of Math. (2)},
   volume={150},
   date={1999},
   number={3},
   pages={1059--1081},
   issn={0003-486X},
   review={\MRref{1740986}{2001a:22017}},
   doi={10.2307/121062},
}

\bib{Gaboriau:Examples}{article}{
   author={Gaboriau, D.},
   title={Examples of groups that are measure equivalent to the free group},
   journal={Ergodic Theory Dynam. Systems},
   volume={25},
   date={2005},
   number={6},
   pages={1809--1827},
   issn={0143-3857},
   review={\MRref{2183295}{2006i:22024}},
   doi={10.1017/S0143385705000258},
}

\bib{Gaboriau:Invariantsl2}{article}{
   author={Gaboriau, Damien},
   title={Invariants $l^2$ de relations d'\'equivalence et de groupes},
   language={French},
   journal={Publ. Math. Inst. Hautes \'Etudes Sci.},
   number={95},
   date={2002},
   pages={93--150},
   issn={0073-8301},
   review={\MRref{1953191}{2004b:22009}},
   doi={10.1007/s102400200002},
}

\bib{Goresky-Harder-MacPherson:Weighted}{article}{
   author={Goresky, M.},
   author={Harder, G.},
   author={MacPherson, R.},
   title={Weighted cohomology},
   journal={Invent. Math.},
   volume={116},
   date={1994},
   number={1-3},
   pages={139--213},
   issn={0020-9910},
   review={\MRref{1253192}{95c:11068}},
   doi={10.1007/BF01231560},
}

\bib{Guivarch:Croissance}{article}{
   author={Guivarc'h, Yves},
   title={Croissance polynomiale et p\'eriodes des fonctions harmoniques},
   language={French},
   journal={Bull. Soc. Math. France},
   volume={101},
   date={1973},
   pages={333--379},
   issn={0037-9484},
   review={\MRref{0369608}{51 \#5841}},
}

\bib{Harish-Chandra:Automorphic}{book}{
   author={Harish-Chandra},
   title={Automorphic forms on semisimple Lie groups},
   series={Notes by J. G. M. Mars. Lecture Notes in Mathematics, No. 62},
   publisher={Springer-Verlag},
   place={Berlin},
   date={1968},
   pages={x+138},
   review={\MRref{0232893}{38 \#1216}},
}

\bib{Hess-Schick:L2Hyperbolic}{article}{
   author={Hess, Eckehard},
   author={Schick, Thomas},
   title={$L^2$-torsion of hyperbolic manifolds},
   journal={Manuscripta Math.},
   volume={97},
   date={1998},
   number={3},
   pages={329--334},
   issn={0025-2611},
   review={\MRref{1654784}{99h:58200}},
   doi={10.1007/s002290050105},
}

\bib{Ji:IntegralNovikov}{article}{
   author={Ji, Lizhen},
   title={Integral Novikov conjectures and arithmetic groups containing
   torsion elements},
   journal={Comm. Anal. Geom.},
   volume={15},
   date={2007},
   number={3},
   pages={509--533},
   issn={1019-8385},
   review={\MRref{2379803}{2009b:22010}},
}

\bib{Kammeyer:L2-invariants}{thesis}{
   author={Kammeyer, Holger},
   title={\(L^2\)-invariants of nonuniform lattices in semisimple Lie groups},
   type={doctoral thesis},
   organization={Universit\"at G\"ottingen},
   date={2013},
   eprint={http://hdl.handle.net/11858/00-1735-0000-0015-C6E6-8},
}

\bib{Kang:Cofinite}{thesis}{
   author={Kang, Hyosang},
   title={Cofinite classifying spaces for lattices in \(\R\)-rank one semisimple Lie groups},
   type={Ph.\,D.\,thesis},
   organization={The University of Michigan},
   date={2011},
   eprint={http://hdl.handle.net/2027.42/86348},
}

\bib{Lott:HeatKernels}{article}{
   author={Lott, John},
   title={Heat kernels on covering spaces and topological invariants},
   journal={J. Differential Geom.},
   volume={35},
   date={1992},
   number={2},
   pages={471--510},
   issn={0022-040X},
   review={\MRref{1158345}{93b:58140}},
}

\bib{Lott-Lueck:L2ThreeManifolds}{article}{
   author={Lott, John},
   author={L{\"u}ck, Wolfgang},
   title={$L^2$-topological invariants of $3$-manifolds},
   journal={Invent. Math.},
   volume={120},
   date={1995},
   number={1},
   pages={15--60},
   issn={0020-9910},
   review={\MRref{1323981}{96e:58150}},
   doi={10.1007/BF01241121},
}

\bib{Lueck:ArbitraryModules}{article}{
   author={L{\"u}ck, Wolfgang},
   title={Dimension theory of arbitrary modules over finite von Neumann
   algebras and $L^2$-Betti numbers. I. Foundations},
   journal={J. Reine Angew. Math.},
   volume={495},
   date={1998},
   pages={135--162},
   issn={0075-4102},
   review={\MRref{1603853}{99k:58176}},
   doi={10.1515/crll.1998.015},
}

\bib{Lueck:L2-Invariants}{book}{
   author={L{\"u}ck, Wolfgang},
   title={$L^2$-invariants: theory and applications to geometry and
   $K$-theory},
   series={Ergebnisse der Mathematik und ihrer Grenzgebiete. 3. Folge. A
   Series of Modern Surveys in Mathematics},
   volume={44},
   publisher={Springer-Verlag},
   place={Berlin},
   date={2002},
   pages={xvi+595},
   isbn={3-540-43566-2},
   review={\MRref{1926649}{2003m:58033}},
}

\bib{Lueck-Reich-Schick:ArbitraryNovikovShubin}{article}{
   author={L{\"u}ck, Wolfgang},
   author={Reich, Holger},
   author={Schick, Thomas},
   title={Novikov-Shubin invariants for arbitrary group actions and their
   positivity},
   conference={
      title={Tel Aviv Topology Conference: Rothenberg Festschrift (1998)},
   },
   book={
      series={Contemp. Math.},
      volume={231},
      publisher={Amer. Math. Soc.},
      place={Providence, RI},
   },
   date={1999},
   pages={159--176},
   review={\MRref{1707342}{2000j:55026}},
   doi={10.1090/conm/231/03359},
}

\bib{Lueck-Sauer-Wegner:L2-torsion}{article}{
   author={L{\"u}ck, Wolfgang},
   author={Sauer, Roman},
   author={Wegner, Christian},
   title={$L^2$-torsion, the measure-theoretic determinant conjecture,
   and uniform measure equivalence},
   journal={J. Topol. Anal.},
   volume={2},
   date={2010},
   number={2},
   pages={145--171},
   issn={1793-5253},
   review={\MRref{2652905}{2011m:57031}},
   note={\arXivref{0903.2925v2}},
   doi={10.1142/S179352531000032X},
}

\bib{Lueck-Schick:Hyperbolic}{article}{
   author={L{\"u}ck, W.},
   author={Schick, T.},
   title={$L^2$-torsion of hyperbolic manifolds of finite volume},
   journal={Geom. Funct. Anal.},
   volume={9},
   date={1999},
   number={3},
   pages={518--567},
   issn={1016-443X},
   review={\MRref{1708444}{2000e:58050}},
   doi={10.1007/s000390050095},
}

\bib{Margulis:Arithmeticity}{article}{
   author={Margulis, G. A.},
   title={Arithmeticity of the irreducible lattices in the semisimple groups
   of rank greater than $1$},
   journal={Invent. Math.},
   volume={76},
   date={1984},
   number={1},
   pages={93--120},
   issn={0020-9910},
   review={\MRref{739627}{85j:22021}},
   doi={10.1007/BF01388494},
}

\bib{Olbrich:L2-InvariantsLocSym}{article}{
   author={Olbrich, Martin},
   title={$L^2$-invariants of locally symmetric spaces},
   journal={Doc. Math.},
   volume={7},
   date={2002},
   pages={219--237 (electronic)},
   issn={1431-0635},
   review={\MRref{1938121}{2003i:58044}},
}

\bib{Rumin:AroundHeat}{article}{
   author={Rumin, Michel},
   title={Around heat decay on forms and relations of nilpotent Lie groups},
   conference={
      title={S\'eminaire de Th\'eorie Spectrale et G\'eom\'etrie, Vol. 19,
      Ann\'ee 2000--2001},
   },
   book={
      series={S\'emin. Th\'eor. Spectr. G\'eom.},
      volume={19},
      publisher={Univ. Grenoble I},
      place={Saint},
   },
   date={2001},
   pages={123--164},
   review={\MRref{1909080}{2003f:58062}},
}

\bib{Rumin:DiffGeoOnCC}{article}{
   author={Rumin, Michel},
   title={Differential geometry on C-C spaces and application to the
   Novikov-Shubin numbers of nilpotent Lie groups},
   language={English, with English and French summaries},
   journal={C. R. Acad. Sci. Paris S\'er. I Math.},
   volume={329},
   date={1999},
   number={11},
   pages={985--990},
   issn={0764-4442},
   review={\MRref{1733906}{2001g:53063}},
   doi={10.1016/S0764-4442(00)88624-3},
}

\bib{Schick:L2-Determinant}{article}{
   author={Schick, Thomas},
   title={$L^2$-determinant class and approximation of $L^2$-Betti
   numbers},
   journal={Trans. Amer. Math. Soc.},
   volume={353},
   date={2001},
   number={8},
   pages={3247--3265 (electronic)},
   issn={0002-9947},
   review={\MRref{1828605}{2002f:58056}},
   doi={10.1090/S0002-9947-01-02699-X},
}

\bib{Tits:Classification}{article}{
   author={Tits, J.},
   title={Classification of algebraic semisimple groups},
   conference={
      title={Algebraic Groups and Discontinuous Subgroups (Proc. Sympos.
      Pure Math., Boulder, Colo., 1965)},
   },
   book={
      publisher={Amer. Math. Soc.},
      place={Providence, R.I., 1966},
   },
   date={1966},
   pages={33--62},
   review={\MRref{0224710}{37 \#309}},
}

\bib{tomDieck:TransformationGroups}{book}{
   author={tom Dieck, Tammo},
   title={Transformation groups},
   series={de Gruyter Studies in Mathematics},
   volume={8},
   publisher={Walter de Gruyter \& Co.},
   place={Berlin},
   date={1987},
   pages={x+312},
   isbn={3-11-009745-1},
   review={\MRref{889050}{89c:57048}},
   doi={10.1515/9783110858372.312},
}

\bib{Wall:RationalEuler}{article}{
   author={Wall, C. T. C.},
   title={Rational Euler characteristics},
   journal={Proc. Cambridge Philos. Soc.},
   volume={57},
   date={1961},
   pages={182--184},
   review={\MRref{0122853}{23 \#A185}},
}

\bib{Wegner:L2AsphericalManuscripta}{article}{
   author={Wegner, Christian},
   title={$L^2$-invariants of finite aspherical CW-complexes},
   journal={Manuscripta Math.},
   volume={128},
   date={2009},
   number={4},
   pages={469--481},
   issn={0025-2611},
   review={\MRref{2487437}{2010a:58028}},
   doi={10.1007/s00229-008-0246-z},
}

\bib{Wegner:L2AsphericalMuenster}{article}{
   author={Wegner, Christian},
   title={$L^2$-invariants of finite aspherical CW-complexes with
   fundamental group containing a non-trivial elementary amenable normal
   subgroup},
   conference={
      title={Schriftenreihe des Mathematischen Instituts der Universit\"at
      M\"unster.\ 3. Serie, Heft 28},
   },
   date={2000},
   pages={3--114},
   review={\MRref{1851963}{2002g:57049}},
}

\bib{WitteMorris:ArithGroups}{article}{
  author={Witte Morris, Dave},
  title={Introduction to Arithmetic Groups},
  date={2012},
  status={eprint of a book in progress},
  note={\arXivref{math/0106063v4}},
}

\end{bibdiv}
\end{biblist}

\end{document}